\newtheorem{thm}{Theorem}[section]
\newtheorem{lem}[thm]{Lemma}
\newtheorem{claim}[thm]{Claim}
\newtheorem{prop}[thm]{Proposition}
\newtheorem{fact}[thm]{Fact}
\newtheorem*{claim*}{Claim}
\makeatletter \@addtoreset{equation}{section}
\def\qed{\hfill \rule{4pt}{7pt}}
\def\hf{\mathcal{F}}
\def\hm{\mathcal{M}}
\def\hg{\mathcal{G}}
\def\hht{\mathcal{T}}
\def\hh{\mathcal{H}}
\def\hhn{\mathcal{N}}
\def\ha{\mathcal{A}}
\def\hb{\mathcal{B}}
\def\hn{\mathbb{N}}
\def\ex{\mathcal{\mathbb{E}}}
\def\hr{\mathcal{R}}
\def\hq{\mathcal{Q}}
\def\hht{\mathcal{T}}
\begin{document}

\title{Minimum $\ell$-degree thresholds for rainbow perfect matching in $k$-uniform hypergraphs}
\author{ Jie You\\[5pt]
Center for Applied Mathematics\\
Tianjin University\\
Tianjin 300072, P. R. China\\[6pt]
E-mail: yj\underline{ }math@tju.edu.cn
}
\date{}
\maketitle
\begin{abstract}
Given $n\in k\hn$ elements set $V$ and  $k$-uniform hypergraphs $\hh_1,\ldots,\hh_{n/k}$ on $V$. A rainbow perfect matching is a collection of pairwise disjoint edges $E_1\in \hh_1,\ldots,E_{n/k}\in \hh_{n/k}$ such that $E_1\cup\cdots\cup E_{n/k}=V$.  In this paper, we determine the minimum $\ell$-degree condition that guarantees the existence of a rainbow perfect matching for sufficiently large $n$ and $\ell\geq k/2$.
\end{abstract}

\section{Introduction}

\subsection{Background}
Finding a spanning subgraph in a given graph or hypergraph is a fundamental problem in graph theory. In particular, it is desirable to fully characterize all graphs or hypergraphs that contain a spanning copy of a specific subgraph. For instance, Tutte's theorem \cite{tutte1947factorization} provides a characterization of all graphs that contain a perfect matching. However, for many hypergraphs, such a characterization is unlikely to exist due to the NP-completeness of the decision problem of whether a hypergraph $\hh$ contains a given subgraph $\hf$. In fact, Garey and Johnson \cite{garey1979computers} showed that the decision problem of whether a $k$-uniform hypergraph contains a perfect matching  is NP-complete for $k\geq 3$.

Given a set $V$ of size $n$ and an integer $k\geq 2$, we use $\binom{V}{k}$ to denote the family of all $k$-element subsets ($k$-subsets, for short) of $V$. A subfamily $\hh\subseteq\binom{V}{k}$ is called a {\it $k$-uniform hypergraph} (or {\it $k$-graph} in short). For $\hh\subseteq\binom{V}{k}$, we often use $V(\hh)$ to denote its vertex set $V$ and use $\hh$ to denote its edge set. Define the complement of $\hh$ as $\overline{\hh}:=\binom{V}{k}\setminus \hh$. Given $A\subseteq V$, let $\hh[A]$ denote the sub $k$-graph of $\hh$ induced by $A$, namely, $\hh[A]:=\hh\cap \binom{A}{k}$. Define $\hh- A:=\hh[V(H)\setminus A]$.

For $S\in \binom{V}{\ell}$ with $0\leq \ell\leq k-1$, define the {\it link graph} of $S$ $\hhn_\hh(S):=\{T\colon S\cup T\in \hh\}$, and let $\deg_\hh(S)$ be the cardinality of $\hhn_\hh(S)$. The \emph{minimum $\ell$-degree} $\delta_\ell(\hh)$ of $\hh$ is the minimum of $\deg_\hh(S)$ over all $\ell$-element subsets $S$ of $V(\hh)$. Clearly, $\delta_0(\hh)$ is the number of edges in $\hh$. We refer to $\delta_1(\hh)$ as the \emph{minimum vertex degree} of $\hh$ and $\delta_{k-1}(\hh)$ as the \emph{minimum codegree} of $\hh$. We often omit the subscript $\hh$ when the context is clear.

 Given  a partition $V=A\cup B$, let $\hh^0(A,B)$ (or $\hh^1(A,B)$) denote the $k$-graph on $V$ with edge set consists of all  the edges intersect $A$ in an even (or odd) number of vertices.  Clearly,  $\hh^i(A,B)$ contains  perfect matching if $n\in k\hn$ and $in/k$  has some parity with $|A|$.

Let $n\in k\hn$. Define 
\[
ext(n,k):=\left\{ \hh^i(A,B)\colon A\cup B=V, in/k \mbox{ has different parity with } |A| \right\}.
\]
Let
\[\delta(n,k,\ell)=\max_{\hh\in ext(n,k)}\delta_\ell(\hh).\]
Treglown and Zhao determine the minimum $\ell$-degree condition that guarantees the existence of a  perfect matching in $k$-graph.
\begin{thm}[\hspace{1sp}\cite{Treglown2012, Treglown2013}]\label{ZT}
Given integers $k,\ell$ such that $k\geq 3$ and $k/2\leq \ell\leq k-1$, there exists  $n_0\in \hn$ such that the following holds. Suppose that $\hh$ is a $k$-graph on $n$ vertices with  $n\in k\hn$ and $n\geq n_0$ satisfying $\delta_\ell(\hh)>\delta(n,k,\ell)$, then $\hh$ contains a perfect matching.
\end{thm}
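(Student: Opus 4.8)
\medskip
\noindent\emph{Sketch of an approach to Theorem~\ref{ZT}.}
Write $V=V(\hh)$ and $m=n/k$. The plan is to combine the absorbing method with a stability (``non-extremal versus extremal'') dichotomy. Such a dichotomy is forced on us: the configurations $\hh^i(A,B)\in ext(n,k)$ have minimum $\ell$-degree exactly $\delta(n,k,\ell)$ and yet no perfect matching, so at the threshold no purely degree-based absorbing argument can work. Fix constants $0<\gamma\ll\gamma'\ll\epsilon\ll1$. Call $\hh$ \emph{$\gamma$-extremal} if there is a partition $V=A\cup B$ with $\hh^i(A,B)\in ext(n,k)$ for some $i$ such that the symmetric difference of $\hh$ and $\hh^i(A,B)$ has at most $\gamma n^k$ edges, and \emph{$\gamma$-non-extremal} otherwise. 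Since $\delta(n,k,\ell)=\big(\tfrac12+o(1)\big)\binom{n-\ell}{k-\ell}$ for $k/2\le\ell\le k-1$, the hypothesis already gives $\delta_\ell(\hh)\ge\big(\tfrac12-\gamma\big)\binom{n-\ell}{k-\ell}$ once $n$ is large. I would then prove separately: (I) every $\gamma$-non-extremal $\hh$ with $\delta_\ell(\hh)\ge(\tfrac12-\gamma)\binom{n-\ell}{k-\ell}$ has a perfect matching; and (II) every $\gamma$-extremal $\hh$ with $\delta_\ell(\hh)>\delta(n,k,\ell)$ has a perfect matching. As $\hh$ is either $\gamma$-extremal or not, these two statements give the theorem.

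Case~(I) rests on an \emph{absorbing lemma}: there is a matching $M_0$ with $|V(M_0)|\le\epsilon n$ such that for every $W\subseteq V\setminus V(M_0)$ with $k\mid|W|$ and $|W|\le\gamma'n$, the $k$-graph $\hh[V(M_0)\cup W]$ has a perfect matching. To build $M_0$ I would run the absorbing method of R\"odl, Ruci\'nski and Szemer\'edi in the lattice-based form developed by Keevash and Mycroft and by Han: for a suitable constant $t$, analyse the reachability relation on $V$ --- declare $u\sim v$ when $\Omega(n^{kt-1})$ of the $(kt-1)$-subsets $T$ are such that both $\hh[T\cup\{u\}]$ and $\hh[T\cup\{v\}]$ have perfect matchings --- show that it has boundedly many classes, and then use $\gamma$-non-extremality to deduce that the index lattice generated by the edges relative to this partition is full. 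This implies that for every $k$-set $X$ a positive proportion of all vertex sets $T$ of a suitable constant size are ``$X$-absorbers'' (meaning $\hh[T]$ and $\hh[T\cup X]$ both have perfect matchings), and a random sub-collection of such sets assembles into the desired $M_0$ by a standard second-moment argument. It then remains to observe that $\hh-V(M_0)$ has minimum $\ell$-degree well above the fractional perfect matching threshold --- which for $\ell\ge k/2$ is at most $(1-e^{-1/2})\binom{n-\ell}{k-\ell}$, a constant factor below $\delta(n,k,\ell)$ --- so it contains a matching covering all but a set $W$ with $k\mid|W|$ and $|W|\le\gamma'n$; absorbing $W$ into $M_0$ completes a perfect matching of $\hh$.

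For Case~(II) I would argue by hand. Fix a partition $V=A\cup B$ witnessing $\gamma$-extremality, with $\hh$ close to $\hh^i(A,B)$ where $in/k$ has different parity from $|A|$; one may also assume that $|A|$ differs from $n/2$ by only a bounded amount, as otherwise a well-chosen $\ell$-set would have degree far below $\delta(n,k,\ell)$. Since the parity of $|A|$ rules out a perfect matching lying inside $\hh^i(A,B)$, every perfect matching of $\hh$ must use a bounded number of ``defect'' edges $E$ with $|E\cap A|$ of the parity opposite to $i$; the strict inequality $\delta_\ell(\hh)>\delta(n,k,\ell)$ --- this is where the precise value of $\delta(n,k,\ell)$ enters --- guarantees that enough pairwise disjoint defect edges exist. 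Peel off a minimal such family $E_1,\dots,E_s$, verify that the sizes and the parity of $A\setminus\bigcup_j E_j$ and $B\setminus\bigcup_j E_j$ are now compatible, and complete a perfect matching of $\hh-\bigcup_j E_j$ greedily: the residual $k$-graph is close to a parity-good $\hh^i(A',B')$, which robustly has a perfect matching, and still has enormous minimum $\ell$-degree.

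The main obstacle is the absorbing step of Case~(I) and its interface with Case~(II). On one hand, $\gamma$ must be chosen small enough that the extremal analysis of Case~(II) goes through; on the other, the absorbing argument of Case~(I) requires the \emph{robust} fullness of the index lattice, which is precisely the property that degenerates near the configurations in $ext(n,k)$. Thus the heart of the proof lies in showing that the weak, edge-counting hypothesis of $\gamma$-non-extremality already forces that robustness, and, symmetrically, in carrying out the delicate parity-and-size bookkeeping of Case~(II) so that the strict inequality $\delta_\ell(\hh)>\delta(n,k,\ell)$ can be exploited with essentially no room to spare.
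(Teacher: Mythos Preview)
The paper does not give a standalone proof of Theorem~\ref{ZT}: it is quoted from \cite{Treglown2012,Treglown2013} and then recovered as the diagonal case $\hh_1=\cdots=\hh_{n/k}=\hh$ of the main Theorem~\ref{thm-main}. So the relevant comparison is between your sketch and the paper's proof of Theorem~\ref{thm-main} specialised to this case. At the top level the two agree: both run an extremal/non-extremal dichotomy, and in the non-extremal case both build an absorbing matching and then cover almost everything. Where you genuinely diverge is in how the absorbing matching is produced. You propose the lattice/reachability machinery of Keevash--Mycroft and Han; the paper instead (following \cite{Treglown2013}) counts explicit $2$-absorbers for each balanced $(k+1)$-set via a concrete neighbourhood-intersection dichotomy (Claim~\ref{claim-2.10} here), and then samples a random family using the Frankl--Kupavskii concentration inequality. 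Your route is more abstract and portable; the paper's route is elementary and gives an explicit absorber structure, at the cost of a longer case analysis. For the almost-cover step the paper does not go through fractional matchings at all but adapts the Markstr\"om--Ruci\'nski augmentation argument (Lemma~\ref{lem-pathcover}); your appeal to ``the fractional perfect matching threshold $\le(1-e^{-1/2})\binom{n-\ell}{k-\ell}$'' is not a result I recognise in that form, and you would need to replace it by a citable almost-perfect-matching statement (the paper's Lemma~\ref{lem-pathcover} threshold $\big(\tfrac{k-\ell}{k}-k^{-(k-\ell)}\big)\binom{n-\ell}{k-\ell}$ would do). In the extremal case your plan---peel off a bounded number of parity-breaking edges using the strict inequality $\delta_\ell(\hh)>\delta(n,k,\ell)$, then complete inside a parity-good $\hh^i(A',B')$---matches the paper's Lemma~\ref{lem:main parity}, though the paper finishes differently: rather than ``greedily'', it partitions $V'$ into two balanced $(k+1)$-partite pieces and invokes a large-minimum-degree multipartite matching lemma (Lemma~\ref{lem-k-part}).
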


Given  $k$-uniform hypergraphs $\hh_1,\ldots,\hh_{n/k}$ on $V$, a rainbow perfect matching is a collection of pairwise disjoint  edges $E_1\in \hh_1,\ldots,E_{n/k}\in \hh_{n/k}$ such that $E_1\cup\cdots\cup E_{n/k}=V$. Lu, Wang and Yu give the co-degree threshold for rainbow perfect matchings in $k$-graphs.
\begin{thm}[\hspace{1sp}\cite{Lu2021}]\label{Lu2021}
  Given integers $k,\ell$ such that $k\geq 3$ and $k/2\leq \ell\leq k-1$ and $n\in k\hn$, there exists  $n_0\in \hn$ such that the following holds. Suppose that $\hh_1,\ldots,\hh_{n/k}$ are  $k$-graphs on $n$ vertices with $n\geq n_0$  and $n\in k\hn$ satisfying $\delta_{k-1}(\hh_i)>\delta(n,k,k-1)$ for $i=1,\ldots,n/k$. Then $\hh_1,\ldots,\hh_{n/k}$ admit a rainbow  matching.
\end{thm}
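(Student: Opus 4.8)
The plan is to follow the absorption-and-stability strategy of Rödl–Ruciński–Szemerédi that underlies the non-rainbow codegree threshold (Theorem~\ref{ZT}), carrying it out in the rainbow setting. It is convenient to think of a rainbow matching as a set of pairs $(E,c)$ with $E\in\hh_c$, the edges pairwise disjoint and the colours distinct, a rainbow perfect matching being one that uses all $n$ vertices and all $n/k$ colours. Since $\delta(n,k,k-1)=n/2-O_k(1)$, the hypothesis $\delta_{k-1}(\hh_i)>\delta(n,k,k-1)$ makes every $\hh_i$ extremely dense; in particular each $\hh_i$ on its own has a perfect matching by Theorem~\ref{ZT}. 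Fix a small $\epsilon>0$; the argument will split according to whether or not some $\hh_i$ is $\epsilon$-close (in edit distance) to one of the extremal configurations $\hh^{0}(A,B)$, $\hh^{1}(A,B)$.

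\textbf{A rainbow absorbing structure.} First I would prove a rainbow absorbing lemma: there is a rainbow matching $M_0$ on a vertex set $W$ with colour set $C_W$, where $|W|=k|C_W|\le\epsilon^2 n$, such that for every $R\subseteq V\setminus W$ with $k\mid|R|$ and $|R|\le\epsilon^3 n$, and every $C_R\subseteq\{1,\dots,n/k\}\setminus C_W$ with $|C_R|=|R|/k$, the pair $(W\cup R,\,C_W\cup C_R)$ has a rainbow perfect matching. For a $k$-set $S$ and a colour $c$, call a bounded-size vertex set $A$ disjoint from $S$, together with a colour set $C_A\not\ni c$ with $|A|=k|C_A|$, an $(S,c)$-absorber if both $(A,C_A)$ and $(A\cup S,\,C_A\cup\{c\})$ admit rainbow perfect matchings. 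The role of the degree hypothesis is to guarantee $\Omega(n^{k|C_A|})$ such absorbers for every pair $(S,c)$: one builds an absorber greedily, and at each step the link of a $(k-1)$-set in whichever $\hh_j$ is currently being used has more than $n/2-O(1)$ vertices, leaving $\Omega(n)$ admissible choices — enough also to steer the intersection parities that make the two matchings exist. A standard random selection (pick each absorber with probability $\Theta(n^{-k|C_A|+1})$, then delete overlapping ones) then yields $M_0$: afterwards every $(S,c)$ still has $\gg\epsilon^3 n$ internally compatible absorbers inside $W$, which can be activated one at a time to swallow $R$ together with $C_R$.

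\textbf{Completing the matching.} In the non-extremal case, on $V':=V\setminus W$ with colours $C':=\{1,\dots,n/k\}\setminus C_W$ I would build an almost-perfect rainbow matching covering all but at most $\epsilon^3 n$ vertices (hence leaving at most $\epsilon^3 n/k$ colours), via a rainbow version of the RRS near-perfect-matching argument: non-extremality makes the reachability of $(k-1)$-sets within each $\hh_i$ good, so the defect of a maximal rainbow matching can be repaired down to $o(n)$, after which Step~1 absorbs the remainder. In the extremal case, if $\hh_{i_0}$ is $\epsilon$-close to (say) $\hh^{0}(A_{i_0},B_{i_0})$, and likewise every near-extremal $\hh_i$ is close to some $\hh^{j_i}(A_i,B_i)$, one argues directly from the structure: the only way a pure $\hh^{j}(A,B)$ forbids a perfect matching is the rigid parity constraint $\sum_c|E_c\cap A|\equiv|A|\pmod 2$, and any $\hh_i$ realising that constraint would have $\delta_{k-1}(\hh_i)\le\delta(n,k,k-1)$; the strict hypothesis therefore forces each near-extremal $\hh_i$ to contain the ``escape'' edges (those violating its dominant intersection pattern on $A_i$) needed to break the obstruction, and one assembles the rainbow perfect matching colour by colour, choosing for each $E_c$ how many vertices of each $A_i$ it takes so that all the size/parity equations $\sum_c|E_c\cap A_i|=|A_i|$ hold simultaneously, the required edges being abundant because $\delta_{k-1}(\hh_i)\ge n/2-O(1)$.

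The step I expect to be the main obstacle is the extremal case. In the single-hypergraph proof there is one partition $(A,B)$ and one parity condition, whereas here the near-extremal $\hh_i$ may cluster around \emph{several} different partitions $(A_i,B_i)$ of differing sizes, so one must satisfy one size/parity equation per partition while simultaneously fitting together exactly one edge from each of the $n/k$ colour classes; moreover the construction sits exactly on the boundary of the threshold, so the strict inequality must be squeezed for precisely the escape edges that kill every parity obstruction (and one must verify no genuinely rainbow obstruction survives beyond the ``wrong-parity'' phenomena already present for a single hypergraph). By comparison, the absorbing lemma and the non-extremal completion are fairly routine adaptations of known arguments.
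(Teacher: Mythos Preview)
Your absorption-plus-stability outline is the right shape, but your case split is where the plan breaks down, and the obstacle you flag at the end is genuine under your split. You divide according to whether \emph{some} $\hh_i$ is $\epsilon$-close to an extremal $\hh^{j}(A,B)$; this forces your extremal branch to handle the situation where several $\hh_i$ are near-extremal with respect to \emph{different} partitions $(A_i,B_i)$, and you give no mechanism for that. Relatedly, the absorbing lemma as you state it in Step~1, with no non-extremality hypothesis at all, is simply false: if every $\hh_i$ equals the same $\hh^0(A,B)$ and $|S\cap A|$ is odd, then no $(S,c)$-absorber can exist, since every edge of every colour meets $A$ evenly and hence so does any rainbow matching on any set. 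So absorption must carry a non-extremal hypothesis, and the question is which one.

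The paper's resolution is to recast the problem inside a single auxiliary $(1,k)$-graph $\hht=\mathcal{T}(\hh_1,\dots,\hh_{n/k})$ on $X\cup V$, with $X=\{x_1,\dots,x_{n/k}\}$ indexing colours and edges $\{x_i\}\cup E$ for $E\in\hh_i$; a rainbow perfect matching is exactly a perfect matching of $\hht$. The dichotomy is then whether $\hht$ itself is $\epsilon$-close to $\hht_{ext}=\mathcal{T}(\hh^i(A,B),\dots,\hh^i(A,B))$ for one fixed $(A,B,i)$. In the extremal branch there is consequently a \emph{single} partition and a single parity to repair (Lemma~\ref{lem:main parity}), after which the remaining graph is carved into two balanced $(k{+}1)$-partite $(k{+}1)$-graphs of near-complete minimum degree and finished off by Lemma~\ref{lem-k-part}. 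Your ``many misaligned partitions'' scenario lands in the paper's \emph{non-extremal} branch: if the colours are aligned to genuinely different bipartitions, $\hht$ cannot be close to any single $\hht_{ext}$, and the Counting Lemma (Lemma~\ref{lem-connecting}) then produces $\Omega(n^{2(k+1)})$ $2$-absorbers for every balanced $(k{+}1)$-set, so absorption (Lemma~\ref{lem-absorbing}) handles it without any direct parity bookkeeping. The idea missing from your plan is precisely this auxiliary-graph reformulation and the placement of the extremal/non-extremal dichotomy at the level of $\hht$ rather than of individual $\hh_i$.
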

 In this paper,  we determine the minimum $\ell$-degree condition that guarantees the existence of a rainbow perfect matching in $\hh_1,\ldots,\hh_{n/k}$ for sufficiently large $n\in k\hn$ and $\ell\geq k/2$, which is a generalisation of Theorem \ref{ZT} and \ref{Lu2021}.
\begin{thm}[Main Result]\label{thm-main}
  Given integers $k,\ell$ such that $k\geq 3$ and $k/2\leq \ell\leq k-1$ and $n\in k\hn$, there exists  $n_0\in \hn$ such that the following holds. Suppose that $\hh_1,\ldots,\hh_{n/k}$ are  $k$-graphs on $n$  vertices with $n\geq n_0$ and $n\in k\hn$ satisfying $\delta_\ell(\hh_i)>\delta(n,k,\ell)$ for $i=1,\ldots,n/k$. Then $\{\hh_1,\ldots,\hh_{n/k}\}$ admits a rainbow  matching.
\end{thm}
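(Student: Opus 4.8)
The plan is to run the absorption method together with a stability dichotomy, generalising Treglown and Zhao's proof of Theorem~\ref{ZT} and the rainbow argument of Lu, Wang and Yu behind Theorem~\ref{Lu2021}; the hypothesis $\ell\ge k/2$ enters, as there, to ensure that the only relevant ``barrier'' $k$-graphs are the bipartite ones $\hh^i(A,B)$. Write $m:=n/k$ and fix constants $1/n\ll\varepsilon\ll\xi\ll 1/k$. Call $\hh_i$ \emph{$\xi$-extremal} if there are a partition $V=A\cup B$ and $b\in\{0,1\}$, with $|A|$ within $\xi n$ of the part-size of a member of $ext(n,k)$ attaining $\delta(n,k,\ell)$, such that all but at most $\xi n^k$ edges of $\hh_i$ lie in $\hh^{b}(A,B)$. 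We split according to whether some colour class is $\xi$-extremal; equivalently, in the language of Treglown--Zhao, whether the robust edge-lattice of each $\hh_i$ is full.

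\textbf{Non-extremal case.} Assume no $\hh_i$ is $\xi$-extremal. I would first prove a \emph{rainbow absorbing lemma}: there are a colour set $I_0\subseteq[m]$ with $|I_0|\le\varepsilon n$ and a rainbow matching $M_0=\{E_i:i\in I_0\}$, $E_i\in\hh_i$, such that for every $W\subseteq V\setminus V(M_0)$ with $k\mid|W|$, $|W|\le\varepsilon^2 n$, and every $I_W\subseteq[m]\setminus I_0$ with $|I_W|=|W|/k$, the family $\{\hh_i:i\in I_0\cup I_W\}$ has a rainbow perfect matching of $V(M_0)\cup W$. This comes from the usual probabilistic selection of short absorbers followed by a deletion step: fullness of the robust lattices guarantees $\Omega(n)$ absorbers of each required type in each colour (exactly as in the proof of Theorem~\ref{ZT}), the only new bookkeeping being that each absorber carries a designated colour, so that activating it to swallow part of $W$ leaves the right number of unused colours. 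Second, a \emph{rainbow almost-perfect matching lemma}: on $V':=V\setminus V(M_0)$ the family $\{\hh_i-V(M_0):i\in[m]\setminus I_0\}$ has a rainbow matching missing at most $\varepsilon^2 n$ vertices; since $\delta_\ell(\hh_i)>\delta(n,k,\ell)$ sits comfortably above the almost-perfect-matching threshold, this follows from a fractional-matching-and-rounding argument (or an iterated greedy one), using one fresh colour per edge. Taking the almost-perfect rainbow matching and absorbing its uncovered vertices $W$ (with the unused colours $I_W$) through $M_0$ yields a rainbow perfect matching.

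\textbf{Extremal case.} Assume some $\hh_{i_0}$ is $\xi$-extremal with witnessing partition $V=A\cup B$, and carry out a case analysis of how the remaining colour classes sit relative to barrier structures. If some $\hh_j$ is sufficiently transverse to $\{A,B\}$ (has $\Omega(n^k)$ edges in neither $\hh^0(A,B)$ nor $\hh^1(A,B)$), then colour $j$ supplies absorbers moving vertices across $\{A,B\}$, which together with the barrier-like classes (matched inside $A$ and inside $B$) assembles a rainbow perfect matching. Otherwise every $\hh_i$ is close to some $\hh^{b_i}(A,B)$ for the \emph{same} partition, and the only obstruction is parity: a rainbow perfect matching $\{E_i\}$ has $|E_i\cap A|\equiv b_i\pmod 2$ for all but a few $i$, so $|A|\equiv\sum_i b_i\pmod 2$ is forced. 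When this congruence fails we must use an edge outside $\hh^{b_i}(A,B)$ in some colour --- a ``parity-breaking'' edge --- and the \emph{strict} inequality $\delta_\ell(\hh_i)>\delta(n,k,\ell)\ge\delta_\ell(\hh^{b_i}(A,B))$ (for those $i$ with $\hh^{b_i}(A,B)\in ext(n,k)$) forces one to exist; a single such edge already repairs the parity. One then finishes greedily, placing $b_i$-parity edges in the remaining colours, respecting the $A/B$ split, and using Hall-/defect-type arguments to keep the two sides balanced across all $m$ colours while routing around the $\le\xi n^k$ exceptional edges of each class.

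\textbf{Main obstacle.} I expect the non-extremal case to be a lengthy but fairly mechanical rainbow adaptation of known absorption arguments, the one genuinely new point being the colour attached to each absorber. The hard part is the extremal case: controlling every configuration of the $m$ colour classes relative to the barriers, pinning down the exact arithmetic of $\delta(n,k,\ell)$ so that the ``$+1$'' in the degree hypothesis always kills the parity obstruction, and then executing a colour-coordinated global construction in which the $O(1)$-per-colour imbalances and exceptional edges --- which could otherwise total $\Omega(n)$ over all $m$ colours --- stay simultaneously under control.
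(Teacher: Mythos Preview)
Your overall architecture---absorption plus a stability dichotomy---matches the paper, but your dichotomy and your handling of the extremal case diverge from what the paper actually does, and this is where your plan becomes hard.

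The paper does not split according to whether some single colour class $\hh_i$ is close to a barrier. Instead it encodes the whole system as the $(1,k)$-graph $\mathcal{T}=\mathcal{T}(\hh_1,\dots,\hh_{n/k})$ on $X\cup V$ (edges $\{x_i\}\cup E$ with $E\in\hh_i$) and calls the instance extremal when $\mathcal{T}$ itself is $\epsilon$-close to $\mathcal{T}_{ext}=\mathcal{T}(\hh_{ext},\dots,\hh_{ext})$. This is a \emph{global} condition: it forces almost all $\hh_i$ to be close to the \emph{same} $\hh^b(A,B)$ with the \emph{same} partition. Hence all the ``mixed'' scenarios you worry about---only some colours extremal, or colours extremal with respect to different partitions or different parities---land in the paper's non-extremal case, where the Counting Lemma still produces $\Omega(n^{2(k+1)})$ $2$-absorbers in $\mathcal{T}$ for every balanced $(k+1)$-set. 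Your choice of dichotomy pushes exactly these mixed scenarios into the extremal case, which is precisely the place you flag as the main obstacle; the paper's dichotomy simply avoids it.

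Two further points. First, your definition of ``transverse'' is vacuous: $\hh^0(A,B)\cup\hh^1(A,B)=\binom{V}{k}$, so no edge lies in neither; presumably you mean that $\hh_j$ has $\Omega(n^k)$ edges of \emph{each} parity with respect to $A$. Second, the paper's extremal case is not a colour-by-colour greedy/Hall construction. With $\mathcal{T}$ close to $\mathcal{T}_{ext}$ the paper removes the few non-``good'' vertices by a short matching chosen so that the induced parities come out right (this is where the strict inequality is used, to find one parity-breaking edge), and then partitions the rest of $X'\cup V'$ into two balanced $(k+1)$-partite $(k+1)$-graphs whose minimum vertex degree is near-complete; a black-box lemma (Lemma~\ref{lem-k-part}) then yields perfect matchings in both parts. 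Your plan is not wrong in spirit, but it trades the paper's single global alignment and the $(k+1)$-partite black box for a substantially longer case analysis that you have not yet carried out.
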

It is shown in \cite{Han2022} that $\delta(n,k,\ell)$ takes its maximum value at $\hh^i(A,B)$ for some $A,B,i$ satisfy $-1\leq|A|-|B|\leq 1$ and $2\nmid in/k+|A|$, and denote one of the extremal graph as $\hh_{ext}$. By letting $\hh_1=\ldots=\hh_{n/k}=\hh_{ext}$. We infer that there is  no rainbow perfect matching in $\hh_1,\ldots,\hh_{n/k}$. It implies that the minimum $\ell$-degree condition in Theorem \ref{thm-main} is best possible.

It seems hard to compute the precise values of $\delta(n,k,\ell)$ for $\ell\leq k-2$. In \cite{Treglown2012}, it is showed that
\begin{equation}
	\delta(n,k,k-1)=
	\begin{cases}
		n/2-k+2 & \mbox{if $k/2$ is even and $n/k$ is odd}\\
		n/2-k+3/2 & \mbox{if $k$ is odd and $(n-1)/2$ is odd }\\
		n/2-k+1/2 & \mbox{if $k$ is odd and $(n-1)/2$ is even}\\
        n/2-k+1 & \mbox{otherwise.}
	\end{cases}
\end{equation}

The following proposition allows us to infer $\ell'$ minimum degree from $\ell$ minimum degree for $\ell'\leq \ell$. The proof is straightforward and omitted here. This result has also  used in \cite{Treglown2012, Treglown2013}.
\begin{prop}\label{prop-2.1}
Let $0\leq \ell'\leq \ell<k$ and let $\hh$ be a $k$-graph. If $\delta_\ell(\hh)\geq x\binom{n-\ell}{k-\ell}$ for some $0\leq x\leq 1$, then $\delta_{\ell'}(\hh)\geq x\binom{n-\ell'}{k-\ell'}$.
\end{prop}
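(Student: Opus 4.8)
The plan is a routine double-counting argument, applied to a single $\ell'$-set. Fix an arbitrary $S'\in\binom{V(\hh)}{\ell'}$; since $\delta_{\ell'}(\hh)$ is the minimum of $\deg_\hh(S')$ over all such $S'$, it suffices to show $\deg_\hh(S')\geq x\binom{n-\ell'}{k-\ell'}$.

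First I would count in two ways the set of pairs $(S,E)$ with $S\in\binom{V(\hh)}{\ell}$, $S'\subseteq S$, and $E\in\hh$ with $S\subseteq E$. Summing over $S$ first gives $\sum_{S'\subseteq S,\,|S|=\ell}\deg_\hh(S)$. Summing over $E$ first: each edge $E\in\hh$ with $S'\subseteq E$ contributes the number of $\ell$-sets $S$ with $S'\subseteq S\subseteq E$, which is $\binom{k-\ell'}{\ell-\ell'}$, and edges not containing $S'$ contribute nothing; hence the total equals $\binom{k-\ell'}{\ell-\ell'}\deg_\hh(S')$. Next I would bound the first sum from below: there are exactly $\binom{n-\ell'}{\ell-\ell'}$ admissible $S$, and each satisfies $\deg_\hh(S)\geq\delta_\ell(\hh)\geq x\binom{n-\ell}{k-\ell}$ by hypothesis. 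Combining the two expressions yields
\[
\binom{k-\ell'}{\ell-\ell'}\deg_\hh(S')\ \geq\ x\binom{n-\ell'}{\ell-\ell'}\binom{n-\ell}{k-\ell}.
\]

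It then remains to verify the binomial identity $\binom{n-\ell'}{\ell-\ell'}\binom{n-\ell}{k-\ell}=\binom{k-\ell'}{\ell-\ell'}\binom{n-\ell'}{k-\ell'}$, which follows by expanding both sides in factorials; each side equals $\frac{(n-\ell')!}{(\ell-\ell')!\,(k-\ell)!\,(n-k)!}$. Dividing through by $\binom{k-\ell'}{\ell-\ell'}$ gives $\deg_\hh(S')\geq x\binom{n-\ell'}{k-\ell'}$, and taking the minimum over $S'$ completes the proof. There is no genuine obstacle; the only point requiring a line of care is the binomial identity, which one can also sidestep by inducting on $\ell-\ell'$ and treating only the single step $\ell'=\ell-1$, where the identity reduces to $(n-\ell+1)\binom{n-\ell}{k-\ell}=(k-\ell+1)\binom{n-\ell+1}{k-\ell+1}$.
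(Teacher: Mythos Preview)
Your proof is correct: the double-counting of pairs $(S,E)$ with $S'\subseteq S\subseteq E$ is exactly the standard argument, and your verification of the binomial identity is accurate. The paper itself omits the proof entirely as ``straightforward,'' so there is nothing to compare against; your write-up is a clean instantiation of the routine argument the authors had in mind.
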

By Proposition \ref{prop-2.1}, $\delta(n,k,\ell)$  are  only known to be $(1/2-o(1))\binom{n-\ell}{k-\ell}$.


 We often write $0<a_1\ll a_2$ to mean that  there are increasing functions $f$  such that $a_1\leq f(a_2)$. Throughout the paper, we omit floors and ceilings whenever this does not affect the argument.

\subsection{Proof overview}

We say a hypergraph is a $(1,r)$-graph if its vertex set  can be partitioned into two parts $X\cup V$, such that $r|X|=|V|$, and every edge intersects $X$ in exactly one vertex and intersects $V$ in $r$ vertices. Moreover, we say a subset $U\subseteq X\cup V$ is balanced if $r|U\cap X|=|U\cap V|$. 

In \cite{Lu2021-1}, Lu, Yu, and Yuan introduced a $(1,k)$-graphs and reduced finding rainbow matchings in $\hh_1, ..., \hh_{n/k}$ to finding  matchings in this $(1,k)$-graph. Define $\mathcal{T}(\hh_1,\ldots,\hh_{n/k})$ be the hypergraph with vertex set $X\cup V$ where $X=\{x_1,\ldots,x_{n/k}\}$ and edges set $\cup_{i=1}^{t}\{\{x_i\}\cup E\colon E\in \hh_i\}$.  It is clear that $\hh_1,\ldots,\hh_{n/k}$ contain a rainbow  matching if and only if $\mathcal{T}(\hh_1,\ldots,\hh_{n/k})$ contains a perfect matching.

Let $\epsilon>0$ and suppose that $\hh$ and $\hh'$ are $r$-graphs on $V(\hh)$. We say that $\hh$ is $\epsilon$-close to $\hh'$
if $\hh$ can be made a copy of $\hh'$ by adding and deleting at most $\epsilon |V(\hh)|^r$ edges.

Denote $\mathcal{T}(\hh_{ext},\ldots,\hh_{ext})$ by $\hht_{ext}$, where there are $n/k$   $H_{ext}$'s. As a common approach to obtain exact results, Theorem \ref{thm-main} is proven by distinguishing an \emph{extremal} case from a \emph{non-extremal} case and solve them separately.

\begin{thm}[Non-extremal Case]\label{thm-nonextr}
Given $k,\ell$ such that $k\geq 3$ and $k/2\leq \ell\leq k-1$ and $\epsilon>0$, there exists $n_0$ such that the following holds. Suppose that $\hh_1,\ldots,\hh_{n/k}$ are $k$-graphs on vertex set $V$ with $|V|=n\geq n_0$ and $n\in k\hn$. Let $\hht=\mathcal{T}(\hh_1,\ldots,\hh_{n/k})$. If $\hht$ is not $\epsilon$-close to $\hht_{ext}$ and $\deg_\ell(\hh_i)> \delta(n,k,\ell)$ for every $1\leq i\leq n/k $, then $\hht$ contains a perfect matching.
\end{thm}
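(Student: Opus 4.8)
The plan is to follow the now-standard absorption method, adapted to the $(1,k)$-graph $\hht=\mathcal{T}(\hh_1,\ldots,\hh_{n/k})$. First I would record the degree information that $\hht$ inherits from the hypotheses: every vertex $x_i\in X$ has $\deg_{\hht}(x_i)=|\hh_i|=\delta_0(\hh_i)\geq(1/2-o(1))\binom{n}{k}$ by Proposition~\ref{prop-2.1}, and for an $\ell$-set $S\subseteq V$ the codegree $\deg_{\hht}(\{x_i\}\cup S)=\deg_{\hh_i}(S)>\delta(n,k,\ell)=(1/2-o(1))\binom{n-\ell}{k-\ell}$, while $\ell$-sets meeting $X$ have full-ish link. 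The goal is then a perfect matching in $\hht$, equivalently a rainbow perfect matching.

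Second, I would establish an \emph{absorbing lemma}: assuming $\hht$ is not $\epsilon$-close to $\hht_{ext}$, there is a ``small'' matching $M_{abs}$ (covering $o(n)$ vertices, but a constant fraction of every colour used sparingly, and only $o(n/k)$ of the $x_i$'s — or rather, we absorb in blocks of $k+1$ vertices, one $x_i$ and $k$ from $V$) such that for every balanced set $W\subseteq (X\cup V)\setminus V(M_{abs})$ of size at most some $\mu n$, the vertex set $V(M_{abs})\cup W$ has a perfect matching in $\hht$. The standard approach is to show that every balanced $(k+1)$-set $Y=\{x_i\}\cup T$ has $\Omega(n^{k+1})$ ``absorbers'' — $(2k+2)$-sets of vertices $Z$ such that both $Z$ and $Z\cup Y$ are perfectly matchable — and then take a random sub-family and clean it up. Counting absorbers reduces to a codegree/supersaturation estimate in each $\hh_i$; here the $\delta_\ell$ bound with $\ell\geq k/2$ is exactly what guarantees enough such configurations, by an argument in the spirit of Treglown--Zhao. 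One must be careful that absorbers for $x_i$-blocks use many different colours, so that removing $M_{abs}$ does not exhaust any $\hh_i$; since $|M_{abs}|=o(n/k)$ this is automatic.

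Third, after setting aside $M_{abs}$, I would run an \emph{almost-perfect-matching} step on the leftover $(1,k)$-graph $\hht'$ on roughly $n'=n-|V(M_{abs})|$ vertices: since $\delta_{k}$ of $\hht'$ (codegree of the $k$-sets that could extend to an edge through some unused $x_i$) is still $\geq(1/2-o(1))\binom{n'}{k}$ in the appropriate normalized sense, a standard cover/greedy or weak-regularity argument yields a matching covering all but at most $\mu' n'$ vertices, with the uncovered set balanced (we can always discard a few more edges to rebalance, since $k|X'|=|V'|$ is maintained up to lower order and parity can be fixed using that $n\in k\hn$). Feeding this leftover balanced set of size $\leq\mu n$ into $M_{abs}$ completes the perfect matching.

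The main obstacle is the absorbing lemma, specifically showing that the non-extremal hypothesis ``$\hht$ is not $\epsilon$-close to $\hht_{ext}$'' translates into ``every balanced $(k+1)$-set has many absorbers.'' Unlike the plain perfect-matching problem where one studies a single $\hh$, here the absorber for a block $\{x_i\}\cup T$ must thread through several distinct colour classes $\hh_j$ simultaneously, so the counting is a multi-coloured supersaturation statement. I expect the proof to go via a dichotomy: either some $\hh_i$ is itself $\epsilon'$-close to an extremal $k$-graph $\hh^a(A,B)$ — and then, because $\hht$ as a whole is \emph{not} $\epsilon$-close to $\hht_{ext}$, the other colour classes must collectively ``break'' the divisibility obstruction, giving the absorbers — or no $\hh_i$ is close to extremal, in which case each has robust expansion and absorbers are abundant by a direct count. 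Making this dichotomy quantitative, and in particular ruling out the ``spaces-barrier'' configuration where the $x_i$'s are forced to an unmatchable side of a vertex partition, is where the bulk of the technical work (and the use of $\ell\geq k/2$) will sit.
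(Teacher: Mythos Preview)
Your outline is correct and matches the paper: invoke an Absorbing Lemma (the paper's Lemma~\ref{lem-absorbing}) to set aside a small matching $\ha$, apply an Almost-Cover Lemma (Lemma~\ref{lem-pathcover}, which uses the explicit threshold $\bigl(\tfrac{k-\ell}{k}-\tfrac{1}{k^{k-\ell}}+\xi\bigr)\binom{n'-\ell}{k-\ell}$ via an argument of Markstr\"om--Ruci\'nski and leaves at most $\sqrt{n}$ vertices) on $\hht-V(\ha)$, then absorb the balanced remainder.

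The one substantive point where your sketch diverges is the mechanism for counting absorbers. You propose a global dichotomy on whether some colour $\hh_i$ is itself close to an extremal $k$-graph; the paper instead fixes an arbitrary pair $x_1,x_2\in X\setminus\{x\}$ and applies a \emph{per-hypergraph} structural dichotomy of Treglown--Zhao (Claim~\ref{claim-2.10}) directly to the link $\hh=\hhn_\hht(x_1)$: either (a) many $\lceil k/2\rceil$-sets $L'$ satisfy $|\hhn_\hh(L)\cap\hhn_\hh(L')|\geq\gamma r$, which yields $1$-absorbers immediately (Claim~\ref{claim-2.11}), or (b) many $\lfloor k/2\rfloor$-sets $R'$ have link of size at least $(\tfrac12+\gamma)\ell$, in which case one threads through both colours $x_1,x_2$ to build a $2$-absorber (Claim~\ref{claim-2.12}), and finally $1$-absorbers are padded out to $2$-absorbers by adjoining an arbitrary disjoint edge (Claim~\ref{claim-2.13}). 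This route is shorter because it recycles the single-hypergraph machinery wholesale; your proposed dichotomy is a different cut and would require arguing separately how the global hypothesis ``$\hht$ not $\epsilon$-close to $\hht_{ext}$'' interacts with individual colours that \emph{are} close to (possibly different) extremal configurations.
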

\begin{thm}[Extremal Case]\label{thm-extr}
Given $k,\ell$ such that $k\geq 3$ and $k/2\leq \ell\leq k-1$.
There exists $\epsilon>0$ and $n_0$ such that for $n\geq n_0$ the following holds. Suppose that $\hh_1,\ldots,\hh_{n/k}$ are $k$-graphs on vertex set $V$ with $|V|=n\geq n_0$ and $n\in k\hn$. Let $\hht=\mathcal{T}(\hh_1,\ldots,\hh_{n/k})$. If $\hht$ is $\epsilon$-close to $\hht_{ext}$ and $\deg_\ell(\hh_i)> \delta(n,k,\ell)$ for every $1\leq i\leq n/k $, then $\hht$ contains a perfect matching.
\end{thm}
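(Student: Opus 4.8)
The plan is to prove Theorem~\ref{thm-extr} by the standard stability/absorption framework, but carefully adapted to the $(1,k)$-graph $\hht=\mathcal{T}(\hh_1,\ldots,\hh_{n/k})$, for which "balanced" subsets play the role that ordinary vertex-counting plays in an ordinary $k$-graph. Since $\hht$ is $\epsilon$-close to $\hht_{ext}=\mathcal{T}(\hh_{ext},\ldots,\hh_{ext})$, and $\hh_{ext}=\hh^i(A,B)$ for some near-balanced bipartition $V=A\cup B$ with $-1\le |A|-|B|\le 1$ and the "wrong" parity of $in/k+|A|$, the first step is to recover, from the closeness hypothesis, an explicit partition of the vertex set of $\hht$ that is structurally almost identical to the one underlying $\hht_{ext}$. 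Concretely, I would find a partition $V=A'\cup B'$ with $||A'|-|B'||\le 1$ and a labelling of $X=\{x_1,\ldots,x_{n/k}\}$ so that, apart from at most $\sqrt{\epsilon}\,n$ exceptional vertices in $V$ and at most $\sqrt{\epsilon}\,(n/k)$ exceptional vertices in $X$, every $x_j$ together with $B'$ behaves like a vertex of $\hh_{ext}$ does: i.e. the "typical" edges of $\hh_j$ are precisely those hitting $A'$ in a number of vertices of the prescribed parity. This is a routine but slightly tedious counting/cleaning argument: one passes from "$\epsilon$-close globally" to "$99\%$ of the degree of each low-degree link lies where the extremal configuration predicts," using $\deg_\ell(\hh_i)>\delta(n,k,\ell)=(1/2-o(1))\binom{n-\ell}{k-\ell}$ to pin down which parity class each $\hh_i$ is (essentially) supported on.

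Next I would set aside a small random "absorbing" reservoir. The key point is that in the $(1,k)$-graph picture one needs \emph{two} kinds of flexibility: ordinary absorbers that swallow a bounded number of leftover $V$-vertices, and "parity/balance" absorbers that can swallow a leftover vertex of $X$ together with exactly $k$ leftover vertices of $V$, or that can switch the parity of $|A'\cap(\text{matched part})|$ by one. Because the minimum $\ell$-degree in each $\hh_i$ is just \emph{above} the extremal threshold, an overwhelming majority of vertices of $V$ have full link behaviour, so for any vertex $v\in V$ and index $i$ there are $\Omega(n^{k-1})$ edges of $\hh_i$ through $v$ landing entirely in the "bulk" of $V$; standard absorbing-lemma machinery (à la Rödl--Ruci\'nski--Szemer\'edi, as used in \cite{Treglown2012,Treglown2013,Lu2021}) then produces a small matching $M_{\mathrm{abs}}$ in $\hht$ whose vertex set $W$ satisfies: for every balanced set $U$ of size at most $\eta n$ with $U\subseteq V(\hht)\setminus(V(M_{\mathrm{abs}})\setminus W)$, the induced hypergraph $\hht[W\cup U]$ has a perfect matching. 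The essential subtlety here is that the "wrong parity" built into $\hh_{ext}$ obstructs \emph{some} balanced sets, so the absorber must be engineered so that the leftover set it is asked to absorb is always of the \emph{correct} type; this forces us to control, throughout, a parity invariant of the matched vertices in $A'$.

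After reserving the absorber, the bulk of the argument is a direct, essentially greedy/elementary construction of an almost-perfect matching in $\hht$ minus the reservoir that respects the partition structure. Here one exploits that, for a near-balanced bipartition, $\hh^i(A',B')$ itself has a perfect matching (and in fact many), and that $\hht$ agrees with $\hht_{ext}$ on all but $\epsilon n^k$ edges, so one can route almost all of $X$ and almost all of $V$ into disjoint edges each of which consists of $x_j$ plus $k$ vertices of $V$ split between $A'$ and $B'$ in a parity-consistent way; the exceptional (atypical) vertices — there are only $O(\sqrt\epsilon\, n)$ of them — are handled one at a time using the $\Omega(n^{k-1})$ "bulk" edges guaranteed by the degree condition, each time updating the running count of how many $A'$-vertices have been used so as to keep the leftover balanced set on the correct side of the parity obstruction. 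Finally, the leftover set (of size $\le\eta n$, balanced, and of the correct parity type by our bookkeeping) is absorbed by $M_{\mathrm{abs}}$, completing the perfect matching of $\hht$, which by the correspondence in the overview is exactly a rainbow perfect matching of $\hh_1,\ldots,\hh_{n/k}$.

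I expect the main obstacle to be precisely this parity/balance bookkeeping: unlike the non-extremal case, where a general absorbing argument suffices, here the extremal configuration is \emph{almost} an obstruction, so one must show that whenever $\hht$ fails to be exactly $\hht_{ext}$ there is at least one "extra" edge somewhere that lets us flip the parity of the used portion of $A'$ — and one must locate such an edge \emph{before} committing the greedy matching, or build a gadget for it into the absorber. Establishing that this flip is always available (using $\deg_\ell(\hh_i)>\delta(n,k,\ell)$ strictly, together with $\hht$ not being literally equal to $\hht_{ext}$) is the crux; once that "parity-switcher lemma" is in hand, the rest is bounded-size casework and routine counting along the lines already developed in \cite{Treglown2012,Treglown2013,Han2022,Lu2021}.
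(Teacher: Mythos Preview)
Your proposal takes a genuinely different route from the paper. You outline an absorption-based argument for the extremal case: build a parity-aware absorber, greedily match almost everything while tracking the parity of $|A'\cap(\text{matched})|$, then absorb the leftover. The paper, by contrast, uses \emph{no absorption at all} in the extremal case. Instead it (i) finds a small matching $\mathcal{M}$ that simultaneously covers every non-$\epsilon'$-good vertex \emph{and} repairs the parity of $|A\setminus V(\mathcal{M})|$ (Lemma~\ref{lem:main parity}), then (ii) explicitly partitions the remaining vertex set $X'\cup A'\cup B'$ into two balanced $(k+1)$-partite systems $(Y_1,T_1,\ldots,T_k)$ and $(Y_2,S_1,\ldots,S_k)$ plus one or two extra edges, arranged so that the complete $(k+1)$-partite $(k+1)$-graphs on those parts sit inside $\hht'_{ext}$; since every surviving vertex is good, each piece has minimum degree $(1-O(\epsilon'))n^k$, and a black-box lemma (Lemma~\ref{lem-k-part}) on dense multipartite hypergraphs supplies a perfect matching directly.

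Both approaches correctly isolate the crux --- the existence of a single ``parity-switching'' edge guaranteed by the strict inequality $\delta_\ell(\hh_i)>\delta(n,k,\ell)$ --- and you identify this explicitly. The concern with your route is that you invoke ``standard absorbing-lemma machinery (\`a la R\"odl--Ruci\'nski--Szemer\'edi)'' to build $M_{\mathrm{abs}}$, but the paper's own Counting Lemma (Lemma~\ref{lem-connecting}) is stated only under the hypothesis that $\hht$ is \emph{not} $\epsilon$-close to $\hht_{ext}$; in the extremal regime a generic balanced $(k+1)$-set need not have $\Omega(n^{2(k+1)})$ absorbers, so you would have to prove a bespoke counting lemma restricted to ``typical'' balanced sets of the correct parity type, which you acknowledge but do not supply. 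The paper's decomposition sidesteps this entirely: once bad vertices are gone and the parity is fixed, the problem reduces to perfect matchings in nearly-complete $(k+1)$-partite $(k+1)$-graphs, where no absorber is needed. Your approach buys uniformity with the non-extremal case at the cost of that extra lemma; the paper's buys a shorter extremal argument at the cost of the four-case partition analysis.
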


In the packing problem, the absorb method is a commonly used technique, which was first introduced by R\"{o}dl, Ruci\'{n}ski and Szemer\'{e}di \cite{VOJTECH2006}, and subsequently employed by many researchers (see \cite[etc]{Lu2021,Wang2023,Marks2011,Treglown2013,Treglown2012,Reiher2019,Kuehn2010,Han2022}). To establish this method, we need to prove two lemmas: the Absorbing Lemma and the Almost Cover Lemma. The Absorbing Lemma involves finding a matching $\ha$ in $\mathcal{T}$ of an appropriate  size, such that for any sufficiently small subset $U\subset X\cup V$, where $U$ is a balanced set, there exists a matching $\mathcal{Q}$ with $V(\mathcal{Q}) = V(\ha)\cup U$. The matching $\ha$ is referred to as an absorb set. The Almost Cover Lemma involves finding a matching in $\mathcal{T}$ that covers almost all the vertices. The proof of the Almost Cover Lemma follows the process outlined in \cite{Marks2011}. By excluding an absorb set in advance, applying the Almost Cover Lemma to the remaining graph, and then absorbing the uncovered vertices into the absorb set, we can prove Theorem  \ref{thm-main} .

In order to establish the Absorbing Lemma, we require  a counting lemma.  Given a balanced $(k+1)$-set, this lemma counts the number of size-2 matchings that can absorb this set.    By applying the Frankl-Kupavskii concentration inequality, we can then prove the Absorbing Lemma. It should be noted that the counting lemma valid under the extra condition that $\mathcal{T}$ is not close to the extremal hypergraph $\mathcal{T}_{ext}$. This is why we separate the proof into extremal and non-extremal cases.

For the extremal case, we mainly  use a result of the author joint with Wang in \cite{Wang2023}. This result states that if every vertex in an $r$-partite $r$-graph is ``good'', then a perfect matching exists. Here, the term ``good'' refers to a binary relation between graphs that is similar to, but stronger than, the concept of ``close'' as previously defined. The precise definition of ``good'' will be provided later, following the earlier explanation of ``close''.  Therefore, we have two remaining tasks: The first one is to remove the vertices that are not ``good'' using a matching. Then we get a hypergraph with all the vertices are ``good''.  The second one is to   divide the remaining hypergraph into two $(k+1)$-partite $(k+1)$-graphs. By showing that vertices in these two $(k+1)$-partite $(k+1)$-graphs inherit the ``good'' property, we can apply the result in \cite{Wang2023} to complete the proof for the extremal case.

The proofs of Theorems \ref{thm-nonextr} and \ref{thm-extr} are given in Sections \ref{sec:non-extrcase} and \ref{sec:extr case}, respectively.

\section{Non-extremal Case}\label{sec:non-extrcase}

In this section, we deal with the non-extremal case by following the absorbing method initiated by R\"{o}dl, Ruci\'{n}ski and Szemer\'{e}di \cite{VOJTECH2006}.

These following two lemmas will be used in the Non-extremal Case. The proofs of these two lemmas will be respectively deferred to Sections 2.1 and 2.2.
\begin{lem}[Almost cover]\label{lem-pathcover}
Given $\xi>0$ and $k,\ell$ such that $k\geq 3$ and $k/2\leq \ell\leq k-1$, there exists $n_0$ such that the following holds. Suppose that $\hh_1,\ldots,\hh_{n/k}$ are $k$-graphs on vertex set $V$ with $|V|=n\geq n_0$ and $n\in k\hn$. Let $\hht=\mathcal{T}(\hh_1,\ldots,\hh_{n/k})$. If  for every  $1\leq i\leq n/k $,
\[
\delta_\ell(\hh_i)\geq \left(\frac{k-\ell}{k}-\frac{1}{k^{k-\ell}}+\xi\right)\binom{n-\ell}{k-\ell},
\]
then $\hht$ contains a  matching  cover all but at most $\sqrt{n}$ vertices.
\end{lem}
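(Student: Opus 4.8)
The plan is to prove the Almost Cover Lemma by a two-stage argument: first extract a nearly-perfect \emph{fractional} matching in $\hht$, then convert it into an integral one that still covers almost everything. Recall that a rainbow matching in $\hh_1,\dots,\hh_{n/k}$ corresponds to a matching in $\hht=\mathcal{T}(\hh_1,\dots,\hh_{n/k})$, where the part $X=\{x_1,\dots,x_{n/k}\}$ must be fully saturated by any matching that covers most of $V$; so the target is a matching missing at most $\sqrt n$ vertices of $V$ (and hence, by a counting check, at most $O(\sqrt n /k)$ vertices of $X$ too). The hypothesis $\delta_\ell(\hh_i)\ge\bigl(\tfrac{k-\ell}{k}-\tfrac1{k^{k-\ell}}+\xi\bigr)\binom{n-\ell}{k-\ell}$ is exactly the threshold — up to the $\xi$ slack — that forces each $\hh_i$ to have a fractional perfect matching; this is the content of the fractional analogue of the Rödl--Ruciński--Szemerédi / Kühn--Osthus--Treglown line of results, and it is the input used in \cite{Marks2011}. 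I would first show that assigning weight $1/(n/k)$ averaged appropriately across the $n/k$ link structures yields a fractional matching of $\hht$ of size $(1-o(1))\cdot n/k$; the point is that a balanced weighting respects both the $X$-side and $V$-side degree constraints.

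First I would set up the fractional relaxation. For each $i$, the degree condition combined with Proposition~\ref{prop-2.1} (to pass from $\ell$-degree to lower-uniformity degree information) and a standard LP-duality / Farkas argument shows $\hh_i$ admits a fractional matching $w_i:\hh_i\to[0,1]$ with $\sum_{E\ni v}w_i(E)\le 1$ for all $v\in V$ and $\sum_E w_i(E)\ge (1-\xi')n/k$ for a suitable $\xi'=\xi'(\xi)$; the $-1/k^{k-\ell}$ in the hypothesis is precisely the deficiency term one must tolerate in the space-barrier construction, so the bound is tight enough to run the argument. Then $w=\sum_i w_i$ (viewing $E\in\hh_i$ as the edge $\{x_i\}\cup E$ of $\hht$) is a fractional matching of $\hht$: the load on $x_i$ is $\sum_E w_i(E)\le n/k$... wait, I need the load on $x_i$ to be at most $1$, so instead I renormalise — each $\hh_i$ contributes a fractional matching scaled to total weight close to $1$ on the vertex $x_i$, and then one checks the $V$-side loads stay at most $1+o(1)$ and corrects by deleting a negligible fraction of weight. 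This gives a fractional matching of $\hht$ saturating all but $o(n)$ of the weight.

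Next I would invoke the standard conversion from a large fractional matching to a large integral matching in hypergraphs of bounded codegree, as in \cite{Marks2011} (which itself follows Rödl--Ruciński--Szemerédi): a fractional matching of size $(1-o(1))\lvert X\cup V\rvert/(k+1)$ in a $(k+1)$-graph where every pair of vertices has small codegree — more precisely, where the relevant ``link'' density conditions from the $\delta_\ell$ hypothesis hold — can be rounded to an integral matching covering all but $O(\sqrt n)$ vertices, using the weak hypergraph regularity lemma or the nibble. Here I would need to verify that $\hht$ satisfies whatever codegree/density hypotheses that rounding lemma requires; these follow from $\delta_\ell(\hh_i)$ being a positive fraction of $\binom{n-\ell}{k-\ell}$ together with the structure of $\hht$ (each edge meets $X$ in exactly one vertex), so the $\ell$-degree of $\hht$ restricted to $(\ell+1)$-sets that meet $X$ appropriately is controlled.

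The main obstacle I expect is making the fractional-matching step produce the sharp constant $\tfrac{k-\ell}{k}-\tfrac1{k^{k-\ell}}$ rather than something weaker: one must use the precise known value of the fractional $\ell$-degree threshold for perfect matchings in $k$-graphs (the ``$1/k^{k-\ell}$ space barrier'') and check it transfers through the $\mathcal{T}$-construction without loss — in particular that the $X$-side imposes no additional barrier because $\lvert X\rvert=n/k$ and every edge uses exactly one $X$-vertex, so the $(1,k)$-graph is automatically ``balanced'' in the sense defined in the overview. A secondary technical point is bookkeeping the two error terms ($o(n)$ uncovered weight from the fractional step, $O(\sqrt n)$ from rounding) so the final bound is genuinely $\le\sqrt n$; this is just choosing $\xi'$ and the regularity parameters small enough relative to $\xi$, which the $0<a_1\ll a_2$ convention handles. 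Everything else — the reduction to $\hht$, the degree translations via Proposition~\ref{prop-2.1} — is routine.
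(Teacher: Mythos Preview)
Your approach --- fractional relaxation followed by rounding --- is genuinely different from the paper's, and it has two concrete gaps.

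First, Step 1 is not available from the hypothesis. You claim the bound is ``exactly the threshold \dots\ that forces each $\hh_i$ to have a fractional perfect matching'' via ``a standard LP-duality / Farkas argument,'' but the constant $\tfrac{k-\ell}{k}-\tfrac1{k^{k-\ell}}$ is \emph{not} the space-barrier value (that is $1-(1-1/k)^{k-\ell}$, different already for $k=6,\ell=3$); it arises in \cite{Marks2011} purely from the combinatorics of the augmenting argument --- specifically the crude bound $|L_S(\mathcal E)|\le k^{k-\ell}$ on transversals of a $(k-\ell)$-tuple of matching edges --- and it has no LP-dual interpretation. Indeed for $\ell=k-1$ the threshold degenerates to $\xi$, and the $k$-graph consisting of all $k$-sets meeting a fixed set $A$ with $|A|=\xi n$ satisfies $\delta_{k-1}\ge\xi(n-k+1)$ while its fractional matching number is only $|A|=\xi n$, which can be far below $n/k$. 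So the fractional input you need simply does not follow from the stated degree condition. Second, even if a near-perfect fractional matching in $\hht$ were in hand, rounding via the nibble or weak regularity leaves $o(n)$ vertices uncovered --- at best $n/\mathrm{polylog}\,n$ --- never $O(\sqrt n)$; there is no black-box rounding lemma with a $\sqrt n$ remainder. The $\sqrt n$ in the statement is tied to the proof method: one needs the leftover to supply enough pairwise disjoint $(\ell+1)$-sets for the averaging and rainbow-matching steps below.

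What the paper actually does is a direct augmenting argument, transporting Lemma~2 of \cite{Marks2011} to the $(1,k)$-graph $\hht$. One takes a maximum matching $\hm$, pads it with dummy balanced $(k+1)$-sets until only $\sqrt n$ vertices remain outside, and for each uncovered $S\in X_1\times\binom{V_1}{\ell}$ counts the link $L_S(\hm)$ of $(k-\ell)$-sets transversal to $\hm$ that complete $S$ to an edge of $\hht$. The degree hypothesis forces $|L_S(\hm)|$ so large that a positive fraction of $(k-\ell)$-tuples $\mathcal E\subseteq\hm$ satisfy $|L_S(\mathcal E)|\ge(k-\ell)k^{k-\ell-1}$; by the extremal bound of Fact~\ref{fact-2.1} each such $\mathcal E$ either carries $k-\ell+1$ pairwise disjoint transversals or its link has a rigid ``one bad part'' structure. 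In either case one combines many disjoint choices of $S$ with suitably overlapping $\mathcal E$'s (found via Lemma~\ref{lem-2.4}) and applies the rainbow-matching lemma of \cite{Pohoata2022} to produce a small matching $\hq\subset\hht$ meeting fewer than $|\hq|$ elements of $\hm$; swapping them contradicts maximality. No fractional relaxation and no regularity lemma enter the argument.
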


\begin{lem}[Absorbing ]\label{lem-absorbing}
	 Given  $\epsilon>0$ and $k,\ell$ such that $k\geq 3$ and $k/2\leq \ell\leq k-1$,  there exists $\gamma$ and $n_0$ such that the following holds. Suppose that $\hh_1,\ldots,\hh_{n/k}$ are $k$-graphs on vertex set $V$ with $|V|=n\geq n_0$ and $n\in k\hn$. Let $\hht=\mathcal{T}(\hh_1,\ldots,\hh_{n/k})$. If $\hht$ is not $\epsilon$-close to $\hht_{ext}$, and $\delta_\ell(\hh_i)>\delta(n,k,\ell)$   for every  $1\leq i\leq n/k$. Then there exists a matching $\ha$ in $\hht$ with $|V(\ha)|\leq \gamma n $ such that for every balanced set $U\subset V(\hht)\setminus V(\ha)$ with $|U|\leq \gamma^{8}n$, there exists a matching $\hq$ in $\hht$ such that $V(\hq)=V(\ha)\cup U$.
\end{lem}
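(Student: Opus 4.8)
The plan is to follow the standard absorbing framework of R\"odl--Ruci\'nski--Szemer\'edi, but implemented in the auxiliary $(1,k)$-graph $\hht=\mathcal{T}(\hh_1,\ldots,\hh_{n/k})$, and using the non-extremal hypothesis to feed a counting step. First I would set up the right notion of an \emph{absorber}: for a balanced $(k+1)$-set $U\subset X\cup V$ (so $U$ contains exactly one vertex $x_i\in X$ and $k$ vertices of $V$), a size-$2$ matching $\{F_1,F_2\}$ in $\hht$ is said to absorb $U$ if $F_1\cup F_2$ can be rearranged into another size-$2$ matching $\{F_1',F_2'\}$ with $V(F_1')\cup V(F_2')=V(F_1)\cup V(F_2)\cup U$, i.e.\ so that $\{F_1',F_2'\}$ covers the extra $k+1$ vertices of $U$ while still using one $X$-vertex per edge from the same index set $\{x_i\}\cup(\text{indices of }F_1,F_2)$. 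The key input here is the \emph{counting lemma} advertised in the proof overview: there exists $\beta=\beta(\epsilon,k,\ell)>0$ such that, provided $\hht$ is not $\epsilon$-close to $\hht_{ext}$ and $\delta_\ell(\hh_i)>\delta(n,k,\ell)$ for all $i$, every balanced $(k+1)$-set $U$ has at least $\beta n^{2k+1}$ absorbing size-$2$ matchings. (I would prove this separately; see below.)

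Next I would build the absorbing matching $\ha$ by a random-selection-plus-cleanup argument. Choose a family $\mathcal{F}$ of size-$2$ matchings by including each of the roughly $n^{2k+2}$ possible size-$2$ matchings independently with probability $p=\gamma^{c}n^{1-2k-2}$ for a suitable constant $c$; a first-moment/Markov argument bounds $|\mathcal{F}|$, a first-moment bound controls the number of ``bad pairs'' of chosen matchings that intersect, and the counting lemma together with concentration (the Frankl--Kupavskii concentration inequality, as the overview mentions) guarantees that with positive probability, for \emph{every} balanced $(k+1)$-set $U$, the number of chosen matchings absorbing $U$ is at least, say, $2\gamma^{8}n/(k+1)$. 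Delete one matching from each intersecting pair and delete the matchings that turned out non-absorbing-heavy-enough; what remains is a single matching $\ha$ (after taking the union of the disjoint size-$2$ matchings) with $|V(\ha)|\le\gamma n$ retaining the property that each balanced $(k+1)$-set is absorbed by $\ge\gamma^{8}n$ of the size-$2$ sub-matchings of $\ha$ that are pairwise vertex-disjoint.

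Finally I would do the iterative absorption of a general balanced set $U\subset V(\hht)\setminus V(\ha)$ with $|U|\le\gamma^8 n$. Since $U$ is balanced, $|U\cap V|=k|U\cap X|$, so we may partition $U$ into at most $\gamma^8 n/(k+1)$ balanced $(k+1)$-sets $U_1,\ldots,U_t$. We absorb them one at a time: having absorbed $U_1,\ldots,U_{j-1}$ using $j-1$ of the pairwise-disjoint absorbing size-$2$ matchings of $\ha$, there remain at least $\gamma^8 n - (j-1) > 0$ size-$2$ matchings in $\ha$ that absorb $U_j$ and are disjoint from those already used, so we may pick one and swap. After $t$ steps every vertex of $U$ has been absorbed and the untouched part of $\ha$ together with the swapped edges forms a matching $\hq$ with $V(\hq)=V(\ha)\cup U$, as required.

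The main obstacle is the counting lemma, i.e.\ showing that non-closeness to $\hht_{ext}$ forces $\Omega(n^{2k+1})$ absorbers per balanced $(k+1)$-set. In $\hht_{ext}$ itself (all $\hh_i=\hh_{ext}$ with the parity/partition structure) absorbers are essentially absent, so some structural hypothesis is genuinely needed; the work is to show that the \emph{only} obstruction is closeness to $\hht_{ext}$. Concretely, one fixes $U=\{x_i\}\cup\{v_1,\ldots,v_k\}$ and looks for edges $F_1=\{x_a\}\cup A$, $F_2=\{x_b\}\cup B$ with $A\cup B$ disjoint from $U$, such that each of $\hh_a,\hh_b$ (and $\hh_i$) still contains suitable reshuffled edges on $A\cup B\cup\{v_1,\ldots,v_k\}$; a supersaturation/stability analysis of the codegree (or $\ell$-degree, via Proposition \ref{prop-2.1}) condition shows that if the count is $o(n^{2k+1})$ for even one $U$, then each $\hh_j$ must be $\epsilon'$-close to some $\hh^{i_j}(A_j,B_j)$ with a common partition, forcing $\hht$ to be $\epsilon$-close to $\hht_{ext}$. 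The remaining steps (random selection, concentration, iterative swap) are routine once the counting lemma and the precise ``absorb'' bookkeeping for $(1,k)$-graphs are in place.
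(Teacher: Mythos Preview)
Your outline follows essentially the same route as the paper: a counting lemma for $2$-absorbers of each balanced $(k+1)$-set, then a probabilistic selection of a small family of such absorbers, then greedy iterative absorption of $U$ one balanced $(k+1)$-piece at a time. Two points need tightening.

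First, your absorber definition is off by one edge. If $\{F_1,F_2\}$ is a size-$2$ matching in $\hht$ disjoint from the balanced $(k+1)$-set $U$, then $V(F_1)\cup V(F_2)\cup U$ has $3(k+1)$ vertices, so the rearranged matching on this set must have size $3$, not $2$; you already list three $X$-indices, so this is a slip rather than a conceptual error. Correspondingly, the number of $2$-absorbers per balanced $(k+1)$-set is $\Omega(n^{2(k+1)})$, not $\Omega(n^{2k+1})$ as you wrote (and the paper's counting lemma indeed gives $\gamma^{5}n^{2(k+1)}$).

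Second, the paper's selection step differs from yours and is the reason Frankl--Kupavskii appears. Rather than including each size-$2$ matching independently with probability $p$ and then deleting intersecting pairs, the paper samples $\hm'$ as a uniformly random \emph{matching} of $m=\gamma n/2(k+1)$ many $2(k+1)$-sets from $\binom{V(\hht)}{2(k+1)}$; disjointness is then automatic, and the Frankl--Kupavskii inequality (a concentration bound for $|\hg\cap\hb|$ when $\hb$ is a uniformly random $t$-matching of $k$-sets) applies directly to show $|\hm'\cap\ha(E)|>\gamma^{7}n$ for all $E$ simultaneously. Your Bernoulli-plus-cleanup variant also works, but then the right concentration tool is Chernoff, not Frankl--Kupavskii, which is not stated for independent indicator sums. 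The remainder of your argument (discard non-absorbing tuples, partition $U$ into balanced $(k+1)$-sets, absorb them one at a time using unused absorbers) matches the paper exactly.
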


  We are now ready to prove Theorem \ref{thm-nonextr}.
\begin{proof}[Proof of Theorem \ref{thm-nonextr}]
  Using Lemma \ref{lem-absorbing}, we can find a matching $\ha$ in $\hht$ with $|V(\ha)|\leq \gamma n$. Removing this matching from $\hht$, we obtain a $(1,k)$-graph $\hht'$ on the vertex set $X'\cup V'$, where $\hht'=\hht-V(\ha)$, $V'=V\setminus V(\ha)$, and $X'=X\setminus V(\ha)$. For $i\in X'$,   let $\hh_i'=\hh_i-V(\ha)$. Let $n'=|V'|\geq n-\gamma n>n/2$. Recall that $\delta(n,k,\ell)=(1/2-o(1))\binom{n-\ell}{k-\ell}$,  we have
\begin{align*}
  \delta_\ell(\hh_i')
  &\geq \delta(n,k,\ell)-|V(\ha)|\binom{n'-\ell-1}{k-\ell-1}\\
  &>\left(\frac{1}{2}-o(1)-\gamma k\right)\binom{n'-\ell}{k-\ell}\\
  &>\left(\frac{k-\ell}{k}-\frac{1}{k^{k-\ell}}+\xi\right)\binom{n'-\ell}{k-\ell},
\end{align*}
when $n$ is sufficiently large  and  $\gamma k<\xi=\frac{1}{2k^{k-\ell}}$.

 Then by Lemma \ref{lem-pathcover}, there exists a matching $\hm$ in $\hht'$ cover all but at most $\sqrt{n}$ vertices. Let $U=V(\hht')\setminus V(\hm)$. Since $\sqrt{n}\ll \gamma^8 n$, by Lemma \ref{lem-absorbing}, there exists a matching $\mathcal{Q}$ such that $V(\mathcal{Q})=V(\ha)\cup U$. Thus $\hm\cup \mathcal{Q}$ forms a perfect matching of $\hht$.
\end{proof}

\subsection{Proof of Almost Cover Lemma}\label{sec:lem-pathcover}
The Almost Cover Lemma is a rainbow version of Lemma 2 in  \cite{Marks2011}, and the proof is very similar. For the sake of completeness, we include the full proof as well.

For two hypergraphs $\hh$ and $\hh'$, let $N(\hh,\hh')$ be the number of copies of $\hh'$ in $\hh$. Same as in \cite{Marks2011},  the following results are needed in the proof. 
\begin{lem}[\hspace{1sp}\cite{erdos1964}]\label{lem-2.4}
  For every integer $r\geq 2$, every $d>0$, and every $r$-partite $r$-graph $\hh'$, there exist $c>0$ and $n_0$ such that for every $r$-graph $\hh$ on $n\geq n_0$ vertices and $|\hh|\geq dn^r$, we have $N(\hh,\hh')\geq cn^{|V(\hh')|}$. 
\end{lem}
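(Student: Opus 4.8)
The statement to prove is Lemma~\ref{lem-2.4}, the Erd\H{o}s supersaturation result for $r$-partite $r$-graphs. Let me sketch a proof plan.

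---

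The plan is to prove this in two stages: first establish the case where $\hh'$ is a single edge viewed inside the complete $r$-partite $r$-graph (equivalently, count complete $r$-partite subconfigurations), and then deduce the general statement by a counting/embedding argument.

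\textbf{Step 1: The Erd\H{o}s--Ko--Rado-type bound for complete $r$-partite $r$-graphs.} First I would recall the classical result of Erd\H{o}s \cite{erdos1964}: for every $r$ and every $t\geq 1$ there is a constant $c_0=c_0(r,t)>0$ and $n_0$ such that every $r$-graph $\hh$ on $n\geq n_0$ vertices with $|\hh|\geq dn^r$ contains the complete $r$-partite $r$-graph $K_r(t,\ldots,t)$ with all parts of size $t$; moreover one gets $c_0 n^{rt}$ labelled copies. The proof is the standard double-counting / convexity argument: count pairs $(\{v_1,\ldots,v_t\}, e)$ where $\{v_1,\ldots,v_t\}$ lies in the ``shadow-like'' structure, apply Jensen's inequality to the degree sequence of $t$-sets, and iterate over the $r$ coordinates. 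I would either cite this directly or reproduce the short induction on $r$.

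\textbf{Step 2: Reduction of the general case to Step 1.} Given an arbitrary $r$-partite $r$-graph $\hh'$ with parts $W_1,\ldots,W_r$ where $|W_i|=t_i$, set $t=\max_i t_i$. By Step 1, $\hh$ contains at least $c_0 n^{rt}$ labelled copies of $K_r(t,\ldots,t)$. Fix one such copy with vertex classes $Z_1,\ldots,Z_r$, each $|Z_j|=t$. Every way of choosing an injection $\phi_j\colon W_j\to Z_j$ for each $j$ yields an embedding of $\hh'$ into $\hh$ (since $K_r(t,\ldots,t)$ contains all $r$-partite edges, so in particular every edge of $\hh'$ maps to an edge of $\hh$); there are $\prod_j t^{\underline{t_j}}\geq 1$ such choices, a positive constant depending only on $\hh'$. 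This produces many copies of $\hh'$, but with multiplicity: I would bound the overcounting by noting each labelled copy of $\hh'$ (on $|V(\hh')|=\sum t_i$ vertices) extends to a labelled $K_r(t,\ldots,t)$ in at most $n^{rt-|V(\hh')|}$ ways (choose the remaining vertices). Dividing, $N(\hh,\hh')\geq c\, n^{|V(\hh')|}$ for a suitable $c=c(r,d,\hh')>0$, as required. A small caveat: one should make sure the parts $Z_j$ in the copy of $K_r(t,\ldots,t)$ are genuinely disjoint, which is automatic since it is $r$-partite, so the resulting embedding of $\hh'$ is into distinct vertices.

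\textbf{Main obstacle.} The conceptual content is entirely in Step~1 (the Erd\H{o}s theorem itself), which is classical; since the paper cites \cite{erdos1964} I would treat it as a black box and spend the writeup on Step~2. The only genuine care needed there is the bookkeeping of the supersaturation constant: ensuring that ``at least $c_0 n^{rt}$ copies of $K_r(t,\ldots,t)$'' (rather than merely one copy) is what the cited result gives — this again follows from the same convexity argument, since if the number of copies were $o(n^{rt})$ one could delete a negligible fraction of edges to destroy them all and contradict the existence statement applied to the slightly sparser graph. So the real work is just making the quantifiers and the division-by-overcounting step precise; there is no serious difficulty.
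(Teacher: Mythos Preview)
The paper does not give its own proof of this lemma: it is stated with a citation to Erd\H{o}s \cite{erdos1964} and used as a black box, so there is nothing to compare your argument against in the paper itself. Your two-step plan (first get supersaturation for $K_r(t,\ldots,t)$ via the classical convexity/iteration argument, then embed an arbitrary $r$-partite $\hh'$ inside such a copy and divide out the overcount) is the standard route and is correct; the bookkeeping you flag in Step~2 is exactly what is needed.
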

\begin{fact}[\hspace{1sp}\cite{Marks2011}]\label{fact-2.1}
  For all integer $k\geq 1$, $n\geq 2$, and $1\leq t\leq n-1$, the maximum number of edges in a $k$-partite $k$-graph with $n$ vertices in each class and no matching of $t+1$ is $tn^{k-1}$.
\end{fact}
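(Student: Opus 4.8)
The plan is to prove the two inequalities separately: a construction realizing $tn^{k-1}$ edges with matching number at most $t$, and a matching upper bound obtained by averaging over the perfect matchings of the complete $k$-partite $k$-graph.

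For the lower bound, let $V_1,\dots,V_k$ be the classes. Pick $t$ vertices $u_1,\dots,u_t\in V_1$ (possible since $t\le n-1<|V_1|$) and let $\hh$ be the family of all transversal $k$-sets containing at least one of $u_1,\dots,u_t$. Since $\hh$ is $k$-partite, each edge meets $V_1$ in exactly one vertex, hence in exactly one $u_i$, so $|\hh|=t\cdot n^{k-1}$; and any matching uses each $u_i$ at most once while every edge of $\hh$ uses one, so $\hh$ has no matching of size $t+1$.

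For the upper bound, fix a $k$-partite $k$-graph $\hh$ on classes $V_1,\dots,V_k$ with $|V_i|=n$ and no matching of size $t+1$. Identify each $V_i$ with $[n]$, pick permutations $\pi_2,\dots,\pi_k\in S_n$ independently and uniformly at random, and let $P$ be the random perfect matching of the complete $k$-partite $k$-graph whose $n$ edges are $\{j,\pi_2(j),\dots,\pi_k(j)\}$ for $j\in[n]$ (this is a perfect matching since each $\pi_i$ is a bijection). For a fixed transversal $k$-set $e=\{a_1,\dots,a_k\}$ one has $e\in P$ precisely when $\pi_i(a_1)=a_i$ for every $i\in\{2,\dots,k\}$, an event of probability $n^{-(k-1)}$; hence $\mathbb{E}\,|\hh\cap P|=|\hh|\,n^{-(k-1)}$. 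On the other hand $\hh\cap P$ is always a matching of $\hh$, so $|\hh\cap P|\le t$; taking expectations yields $|\hh|\le t\,n^{k-1}$, which matches the construction.

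The point is to use this averaging trick rather than induction. A naive induction on $k$ via links over one class, or on $t$ by deleting an edge of a maximum matching, is too lossy: for $k\ge 3$ the covering number of a $k$-partite $k$-graph can exceed its matching number (König-type duality fails), so one cannot reduce to a vertex cover of size $t$. Averaging over random perfect matchings of the complete host hypergraph bypasses covering entirely, after which every step is a one-line computation.
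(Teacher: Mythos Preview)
Your argument is correct. The lower-bound construction is standard, and the averaging upper bound is clean: the random perfect matching $P$ built from independent uniform permutations $\pi_2,\dots,\pi_k$ is uniform over all perfect matchings of the complete $k$-partite host, every fixed transversal edge lies in it with probability exactly $n^{-(k-1)}$, and $\hh\cap P$ is a matching of $\hh$ of size at most $t$, so linearity of expectation gives $|\hh|\le t n^{k-1}$.

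As for comparison with the paper: there is nothing to compare, because the paper does not supply a proof of this statement. Fact~\ref{fact-2.1} is quoted directly from Markstr\"om--Ruci\'nski \cite{Marks2011} and used as a black box in the proof of the Almost Cover Lemma. Your write-up therefore goes beyond what the paper does. (For the record, the original argument in \cite{Marks2011} is essentially the same averaging idea phrased slightly differently: one counts pairs $(e,P)$ with $e\in\hh$ and $P$ a perfect matching of the complete $k$-partite $k$-graph containing $e$, and divides by the total number of perfect matchings.) Your closing remark about why a naive K\"onig-type reduction would be too weak for $k\ge 3$ is accurate and a nice piece of context, though of course not needed for the proof itself.
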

\begin{lem}[\hspace{1sp}\cite{Pohoata2022}]\label{lem-Pohoata2022}
  Let $M_1,\ldots, M_N$ be matchings each of size $t$ in a $r$-partite $r$-uniform hypergraph. If $N>(t-1)t^r$, then there exists $i_1,\ldots, i_t$ and pairwise disjoint $e_{i_1}\in M_{i_1},\ldots, e_{i_t}\in M_{i_t}$.  
\end{lem}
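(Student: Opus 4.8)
The plan is to prove the lemma by induction on $t$, uniformly over all $r$ and all $r$-partite $r$-graphs; the base case $t=1$ is immediate since each $M_i$ is non-empty. For the inductive step, suppose $N>(t-1)t^r$. The idea is to choose a single edge $g$ — an edge of one of the $M_i$'s, say $g\in M_{i_0}$ — put $g$ into the rainbow matching we are building, delete the $r$ vertices of $g$ from the ground set, and apply the induction hypothesis (with parameter $t-1$) to the remaining matchings. A matching $M_j$ ($j\neq i_0$) survives the deletion as a matching of size $\geq t-1$ precisely when $g$ meets at most one of its edges; call $M_j$ \emph{conflicting} with $g$ if $g$ meets at least $2$ edges of $M_j$. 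So it is enough to find a $g$ conflicting with at most $N-1-(t-2)(t-1)^r$ of the other matchings: any rainbow matching of size $t-1$ supplied by the induction hypothesis on the $\geq(t-2)(t-1)^r+1$ surviving matchings automatically avoids $g$, so appending $g$ finishes the step.

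Thus the whole task is to locate a good edge $g$, and I would split into two cases according to the vertex degrees $d(v):=|\{i:v\in V(M_i)\}|$. If $d(v)\leq 2t^r$ for every vertex $v$, a double count over all pairs (a matching $M_{i_0}$, an edge $g\in M_{i_0}$) suffices: for distinct $i_0,j$, the number of edges of $M_{i_0}$ meeting $\geq 2$ edges of $M_j$ is at most half the number of incidences between $M_{i_0}$ and $M_j$, which is at most $|V(M_{i_0})\cap V(M_j)|$; summing over $i_0$ gives $\sum_{v\in V(M_j)}d(v)\leq rt\cdot 2t^r$, and averaging shows that some choice of $(i_0,g)$ conflicts with at most about $rt^r$ matchings, which fits the budget $(t-1)t^r-(t-2)(t-1)^r$. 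If instead some vertex $v^*$ has $d(v^*)>2t^r$, I would build the rainbow matching around $v^*$: delete from each matching through $v^*$ its unique edge through $v^*$, obtaining matchings of size $t-1$ that avoid $v^*$; apply the induction hypothesis to get a rainbow matching of size $t-1$ among them; and extend it by an edge through $v^*$ from a further matching through $v^*$, using a count on how many such $v^*$-edges can meet the $(t-1)$-matching already constructed.

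The step I expect to be the real obstacle is reconciling these two cases and making the constants close up. As sketched, the high-degree argument actually wants $d(v^*)$ to exceed the full threshold $(t-2)(t-1)^r$ (of order $t^{r+1}$), not merely $2t^r$, and it also needs the ``edge co-degrees'' of $v^*$ — the numbers of matchings whose $v^*$-edge contains a given second vertex — to be under control. Handling the intermediate degree range, and the possibility of large edge co-degrees, seems to require iterating the concentration analysis down through $j$-element co-degrees for $j=1,\dots,r$, i.e.\ effectively running the induction simultaneously on $t$ and on the uniformity $r$ and peeling off one coordinate whenever a deep concentration occurs; this is presumably where the exponent $r$ in $t^r$ comes from. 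Setting up that nested induction so the thresholds match is the crux; the base case and the gluing of $g$ onto the inductively produced $(t-1)$-rainbow matching are routine.
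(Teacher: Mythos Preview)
The paper does not prove this lemma: it is quoted from \cite{Pohoata2022} (Pohoata--Sauermann--Zakharov) and used as a black box in the proof of the Almost Cover Lemma. There is therefore no in-paper argument to compare your attempt against.

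As for your sketch itself, you have correctly located the difficulty, and the proposal is explicitly not a complete proof. In the low-degree case your averaging yields roughly $rt^{r}$ conflicts per candidate edge, while the available budget $(t-1)t^{r}-(t-2)(t-1)^{r}$ is of the same order; for small $t$ the inequality goes the wrong way (already at $t=2$ the budget is $2^{r}$ but your conflict bound is $r\cdot 2^{r}$). In the high-degree case, extending a size-$(t-1)$ rainbow matching by an edge through $v^{*}$ requires $d(v^{*})$ to exceed the number of $v^{*}$-edges that meet the $(t-1)r$ vertices already used, and that number is governed by the edge co-degrees at $v^{*}$, over which a threshold of $2t^{r}$ gives no control. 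Your proposed fix --- a nested induction on $t$ and on $r$, peeling off one coordinate whenever a high co-degree concentration occurs --- is in the right spirit, but is not carried out here; until the thresholds are made to match across all regimes this remains a plan rather than a proof.
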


\begin{proof}[Proof of  Lemma \ref{lem-pathcover}]
   Let $\hm$ be a matching in $\hht$ that maximizes the size $|\hm|$. Assume to the contrary that $n-|V(\hm)|\geq \sqrt{n}$, and let $X_1=X\setminus V(\hm)$ and $V_1=V\setminus V(\hm)$. 
   
   The proof strategy involves finding a matching $\hq$ in $\hht$ such that $\hq$ intersects at most $|\hq|-1$ elements in $\hm$. By replacing these elements with $\hq$, we increase the size of $\hm$.  It is precisely due to the use of this strategy that we can safely assume $n-|V(\hm)|=\sqrt{n}$; otherwise, we can augment $\hm$ by adding any $(k+1)$-balanced sets from $X_1\times V_1$ until $n-|V(\hm)|=\sqrt{n}$ is achieved.
   
   For each $S\in X_1\times \binom{V_1}{\ell}$, define
\[L_S(\hm):=\left\{T\in\binom{V(\hm)}{k-\ell}\colon S\cup T\in \hht, \forall e \in \hm, |T\cap e|\leq 1\right\} .\]
Note that  at most $o(n^{k-\ell})$ edges in $\mathcal{T}$ that contain $S$ and intersect any  edge in $\hm$ with more than one vertex. Moreover, at most $\sqrt{n}\binom{n-\ell-1}{k-\ell-1}$ edges intersecting at least one vertex outside $V(\hm)$. Due to the minimum $\ell$-degree condition, we have a lower bound 
\begin{align}\label{eq-2.1}
  |L_S(\hm)|>\left(\frac{k-\ell}{k}-\frac{1}{k^{k-\ell}}+\xi-o(1)\right)\binom{n-\ell}{k-\ell}.
\end{align}
  Clearly, each element in $L_S(\hm)$ intersect exactly $k-\ell$ edges in $\hm$. Then 
   \begin{align}\label{eq-2.2}
     L_S(\hm)=\cup_{\mathcal{E}\in \binom{\hm}{k-\ell}}L_S(\mathcal{E}).
   \end{align}
  We break the family $\binom{\hm}{k-\ell}$  into two parts 
   $
   \binom{\hm}{k-\ell}=A(S)\cup B(S),
   $
   where $A(S)=\{\mathcal{E}\in \binom{\hm}{k-\ell}\colon |L_S(\mathcal{E})|\leq (k-\ell)k^{k-\ell-1}-1\}$ and  $B(S)= \binom{\hm}{k-\ell}\setminus A(S)$.

The equation \eqref{eq-2.2} and the trivial bounds $|L_S(\mathcal{E})|\leq k^{k-\ell}$, $|A(S)|\leq \binom{|\hm|}{k-\ell}$, $|\hm|\leq n/k$   imply that 
\begin{align*}
  |L_S(\hm)|
  &\leq k^{k-\ell}|B(S)|+((k-\ell)k^{k-\ell-1}-1)\binom{|\hm|}{k-\ell}\\
  &\leq \left( \frac{|B(S)|}{\binom{|\hm|}{k-\ell}}+\frac{k-\ell}{k}-\frac{1}{k^{k-\ell}}\right)\binom{n-\ell}{k-\ell},
\end{align*}
Together with the lower bound \eqref{eq-2.1}, then
\begin{align}\label{eq-2.3}
 |B(S)|>(1-o(1))\binom{|\hm|}{k-\ell}.
\end{align}

According to Fact \ref{fact-2.1}, for any $\mathcal{E}\in B(S)$, the maximum matching size of $L_S(\mathcal{E})$ is at least $k-\ell$, and $k-\ell$ achieved only when $L_S(\mathcal{E})$ is isomorphic to a $(k-\ell)$-partite $(k-\ell)$-graph on these $k(k-\ell)$ vertices that consists of $\ell$ isolated vertices belonging to the same vertex part, along with all the remaining possible edges. Let us denote the set of $\mathcal{E}$ such that maximum matching size of $L_S(\mathcal{E})$ is  $k-\ell$   as $B_1(S)$, and $B_2(S)=B(S)\setminus B_1(S)$.


Starting from the minimum degree condition, we derived \eqref{eq-2.3}. Now, we will establish the following two claims based on the maximality of $|\hm|$.

\begin{claim}\label{claim--2.5}
For at most $\xi |X_1|\binom{|V_1|}{\ell}$ sets $S\in X_1\times \binom{V_1}{\ell}$, we have $|B_2(S)|\geq \frac{\xi}{3}\binom{|\hm|}{k-\ell}$.
\end{claim}
\begin{proof}[Proof of Claim]
Let us assume, for the sake of contradiction, that there are at least $\xi |X_1|\binom{|V_1|}{\ell}$ sets   $S\in X_1\times \binom{V_1}{\ell}$ such that $|B_2(S)|\geq \frac{\xi}{3}\binom{|\hm|}{k-\ell}$. 
Then, by averaging, there exists $\mathcal{E}_0\in \binom{\hm}{k-\ell}$ such that $\mathcal{E}_0\in B_2(S)$ for at least $\frac{\xi^2}{3}|X_1|\binom{|V_1|}{\ell}$ sets $S\in X_1\times \binom{V_1}{\ell}$. From these sets, recall that $|V_1|=\sqrt{n}$ and $|X_1|=\sqrt{n}/k$  we can choose a collection of pairwise disjoint sets $S_1, S_2, \ldots, S_N$ where $N>(k-\ell+1)^{r+1}$. According to Lemma \ref{lem-Pohoata2022}, by rearranging the indices there exist pairwise disjoint elements $T_1\in L_{S_1}(\mathcal{E}_0), T_2\in L_{S_2}(\mathcal{E}_0), \ldots, T_{k-\ell+1}\in L_{S_{k-\ell+1}}(\mathcal{E}_0)$.

Thus $\hm\setminus \mathcal{E}_0\cup \{S_1\cup T_1,\ldots,S_{k-\ell+1}\cup T_{k-\ell+1}\}$ forms a matching of $\hht$ of size larger than $\hm$.
This is a contradiction with the maximality of $|\hm|$.
\end{proof}
\begin{claim}\label{claim--2.6}
  For at most $\xi |X_1|\binom{|V_1|}{\ell}$ sets $S\in X_1\times \binom{V_1}{\ell}$, we have $|B_1(S)|\geq \frac{\xi}{3}\binom{|\hm|}{k-\ell}$.
\end{claim}
\begin{proof}[Proof of Claim]
Assume that there are at least $\xi |X_1|\binom{|V_1|}{\ell}$ sets   $S\in X_1\times \binom{V_1}{\ell}$ such that $|B_1(S)|\geq \frac{\xi}{3}\binom{|\hm|}{k-\ell}$. Then, there are at least $\frac{\xi^2}{6}\binom{|\hm|}{k-\ell}$ elements   $\mathcal{E}\in \binom{\hm}{k-\ell}$ such that each $\mathcal{E}\in B_1(S)$ for at least $\frac{\xi^2}{6}|X_1|\binom{|V_1|}{\ell}$ sets $S\in X_1\times \binom{V_1}{\ell}$.  From these $\frac{\xi^2}{6}\binom{|\hm|}{k-\ell}$ sets $\mathcal{E}\in \binom{\hm}{k-\ell}$, it is possible to choose three sets, $\mathcal{E}_1$, $\mathcal{E}_2$, and $\mathcal{E}_3$, that each pair of sets has precisely one common element, and the intersection between any two sets is distinct. Indeed, by Lemma \ref{lem-2.4} such configuration exist.

Without loss of generality, assume that $\mathcal{E}_1=\{f_{1},\ldots,f_{k-\ell-1},e_1\}$, $\mathcal{E}_2=\{e_1,\ldots,e_{k-\ell-1},g_1\}$ and $\mathcal{E}_3=\{g_{1},\ldots,g_{k-\ell-1},f_1\}$.   It follows from the definition that, for each $i=1,2,3$, there exist $\frac{\xi^2}{6}|X_1|\binom{|V_1|}{\ell}$ sets $S\in X_1\times \binom{V_1}{\ell}$ such that $\mathcal{E}_i\in B_1(S)$. Among these $S$, let us arbitrarily choose  $S^1, S^2, S^3$ satisfying $\mathcal{E}_i\in B_1(S^i)$. As mentioned previously, it is known that $L_{S^i}(\mathcal{E}_i)$ is isomorphic to a $(k-\ell)$-partite $(k-\ell)$-graph with $\ell$ isolated vertices. Importantly, these isolated vertices are all contained within the same partite set. Therefore, it is guaranteed that there exist two of $L_{S^i}(\mathcal{E}_i)$ where the isolated vertices are not in the same element of $\hm$. Without loss of generality, let us assume that the isolated vertices of $L_{S^1}(\mathcal{E}_1)$ and $L_{S^2}(\mathcal{E}_2)$ are not in the same element of $\hm$. Since $L_{S^1}(\mathcal{E}_1)$ and $L_{S^2}(\mathcal{E}_2)$ contain all the remaining possible edges, it follows that $L_{S^1}(\mathcal{E}_1)\cup L_{S^2}(\mathcal{E}_2)$ contains a matching $T_1^1,\ldots,T_{k-\ell}^1, T_1^2,\ldots,T_{k-\ell}^2$ such that $T_i^1\in L_{S^1}(\mathcal{E}_1)$ and $T_i^2\in L_{S^2}(\mathcal{E}_2)$ for $i=1,2\ldots,k-\ell$.

Recall that $\mathcal{E}_i\in B_1(S)$ for at least $\frac{\xi^2}{3}|X_1|\binom{|V_1|}{\ell}$ sets $S\in X_1\times \binom{V_1}{\ell}$. Moreover $|V_1|=\sqrt{n}$ and $|X_1|=\sqrt{n}/k$, from these sets we can choose a collection of pairwise disjoint sets $S_1^1, S_2^1, \ldots, S_N^1, S_1^2, S_2^2, \ldots, S_N^2$ where $\mathcal{E}_i\in B_1(S_j^i)$ for $i=1,2$ and $j=1,\ldots,N$,   $N>(k-\ell+1)^{r+1}$. According to Lemma \ref{lem-Pohoata2022}, by rearranging the indices there exist pairwise disjoint elements $T_j^i\in L_{S_{j}^i}(\mathcal{E}_i)$ for $j=1,\ldots, k-\ell$ and $i=1,2$.

Note that $|\mathcal{E}_1\cup \mathcal{E}_2|=2(k-\ell)-1$, thus   $(\hm\setminus (\mathcal{E}_1\cup \mathcal{E}_2))\cup \{S_j^i\cup T_j^i\}_{1\leq j\leq k-\ell, i=1,2}$ forms a matching in $\hht$ of size $|\hm|-|\mathcal{E}_1\cup \mathcal{E}_2|+2(k-\ell)=|\hm|+1$.
This is a contradiction with the maximality of $|\hm|$.
\end{proof}

The \eqref{eq-2.3} implies that for each $S\in X_1\times \binom{V_1}{\ell}$,  we have $|B_1(S)|+|B_2(S)|>(1-o(1))\binom{|\hm|}{k-\ell}$. Therefore either  half of $S\in X_1\times \binom{V_1}{\ell}$ satisfy $B_1(S)>\frac{1-o(1)}{2}\binom{|\hm|}{k-\ell}$ or half of $S\in X_1\times \binom{V_1}{\ell}$ satisfy $B_2(S)>\frac{1-o(1)}{2}\binom{|\hm|}{k-\ell}$, which contradict with at least one of Claim \ref{claim--2.5} and Claim \ref{claim--2.6}.  This concludes the proof of Lemma \ref{lem-pathcover}.
\end{proof}

\subsection{Proofs of the Absorbing Lemma.}\label{sec:lem-reser-absor}
%

Let $A$ be the vertex set of $1$ (or $2$) edges in $\hht$.  For a balanced $(k+1)$-set $E$, we say $A$ is an $1$- (or $2$-){\it absorber} for $E$ if there exists a matching $\mathcal{Q}$ in $\hht$ such that $V(\hq)=V(E)\cup V(A)$.

We prove Absorbing Lemma via the following counting lemma, which itself is proved in Section 2.3.
\begin{lem}[Counting]\label{lem-connecting}
  Given  $\epsilon>0$ and $k,\ell$ such that $k\geq 3$ and $k/2\leq \ell\leq k-1$,  there exists $\gamma$ and $n_0$ such that the following holds.  Suppose that $\hh_1,\ldots,\hh_{n/k}$ are $k$-graphs on vertex set $V$ with $|V|=n\geq n_0$ and $n\in k\hn$. Let $\hht=\mathcal{T}(\hh_1,\ldots,\hh_{n/k})$. If $\delta_\ell(\hh_i)>\delta(n,k,\ell)$   for every  $1\leq i\leq n/k$ and $\hht$ is not $\epsilon$-close to $\hht_{ext}$. Then for each balanced $(k+1)$-set $E$, there exists at least  $\gamma^5 n^{2(k+1)}$ $2$-absorbers.
\end{lem}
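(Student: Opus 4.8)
\textbf{Proof proposal for the Counting Lemma (Lemma \ref{lem-connecting}).}

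The plan is to fix a balanced $(k+1)$-set $E = \{x\}\cup F$, where $x\in X$ and $F\in\binom{V}{k}$ (so $E$ meets exactly one vertex of $X$ and $k$ of $V$), and to count $2$-absorbers for $E$ directly. A natural $2$-absorber consists of two edges $A = e_1\cup e_2$ of $\hht$ with $e_i = \{x_i\}\cup F_i$, $x_i\in X$, $F_i\in\binom{V}{k}$, such that $V(A)\cup V(E)$ can be re-partitioned into two new edges of $\hht$; concretely, I want to arrange that for \emph{both} the original configuration $\{e_1, e_2\}$ and the "swapped" configuration we get valid matchings. The cleanest template: pick $e_1 = \{x_1\}\cup F_1$ and $e_2 = \{x_2\}\cup F_2$ disjoint from $E$ and from each other, and choose them so that, writing $F_1 = F_1'\cup F_1''$ and $F_2 = F_2'\cup F_2''$ appropriately, the pairs $\{x\}\cup(F\text{-part}\cup F_1'\text{-part})$ and $\{x_2\}\cup(\cdots)$ etc. all lie in the respective $\hh_i$'s. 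The point is that $x$ must be matched with a $k$-subset of $F\cup F_1\cup F_2$ lying in $\hh_{\text{index of }x}$, and similarly $x_1, x_2$ must be matched; balancedness ($2k+1+(k+1)=3k+3$ vertices total in $X\cup V$ split as $3$ from $X$ and $3k$ from $V$) guarantees the counts work out, so the only real constraint is membership in the correct link graphs.

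The key steps, in order: (1) Set up the absorber template precisely — specify which $k$-subsets of $V(E)\cup V(A)$ are required to be edges in the original and in the absorbed configuration, so that an absorber is witnessed by a bounded number of "link-graph membership" events. (2) Translate each such event into a lower bound coming from the minimum $\ell$-degree hypothesis $\delta_\ell(\hh_i) > \delta(n,k,\ell) = (1/2 - o(1))\binom{n-\ell}{k-\ell}$: a random $k$-set extending a fixed $\ell$-set lies in $\hh_i$ with probability roughly $\ge 1/2 - o(1)$, and more usefully, for a \emph{fixed} small set of "already chosen" vertices one still has $\Omega(n^{k-\ell})$ completions. (3) The crux: the bare minimum $\ell$-degree only gives density $\ge 1/2 - o(1)$, which is \emph{not} enough to guarantee that two or three independent membership events co-occur with positive density — an adversary could make all the relevant link graphs "anti-correlated" in the style of $\hh^i(A,B)$. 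This is exactly where the hypothesis that $\hht$ is \emph{not} $\epsilon$-close to $\hht_{ext}$ enters: one shows that if the number of $2$-absorbers for some $E$ were less than $\gamma^5 n^{2(k+1)}$, then the link structure of $\hht$ around $E$ must be globally bipartite-like, and a supersaturation / stability argument upgrades this to $\hht$ being $\epsilon$-close to $\hht_{ext}$, a contradiction. So the real work is a stability statement: "few absorbers $\Rightarrow$ close to extremal."

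I expect step (3) to be the main obstacle. Making "few absorbers near $E$ forces $\epsilon$-closeness to $\hht_{ext}$" precise requires: (a) identifying the local obstruction — presumably that the neighborhoods partition $V$ into two parts $A_i, B_i$ for each $\hh_i$ with edges respecting a fixed parity, and that these partitions must essentially agree across $i$ and with a common $X$-side partition, since otherwise one can mix edges from different $\hh_i$ to build an absorber; (b) a cleaning/averaging argument to pass from "$E$ has few absorbers" (a statement about one $(k+1)$-set) to a statement about a positive fraction of $\ell$-sets, then to all of $\hht$; and (c) checking that the resulting global structure is exactly one of the configurations defining $\hht_{ext}$ up to $\epsilon n^{k+1}$ edges. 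The counting parts (steps 1–2) and the final concentration step (feeding $\gamma^5 n^{2(k+1)}$ absorbers into Frankl–Kupavskii to prove Lemma \ref{lem-absorbing}) are, by contrast, routine given the analogous arguments in \cite{Lu2021, Treglown2013}. I would also keep track of the exponent bookkeeping: $2(k+1)$ free vertices for the two absorber edges, each membership event costing at most a constant-factor loss, so a constant density $\gamma^5 > 0$ survives as long as the stability dichotomy holds.
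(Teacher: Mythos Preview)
Your high-level framing is correct: the minimum $\ell$-degree hypothesis alone gives density $\approx 1/2$ in the relevant link graphs, which is just below the threshold needed to force the required common neighborhoods, and the non-extremality hypothesis is precisely what breaks the parity obstruction. But your proposed execution via a contrapositive stability argument (``few absorbers for some $E$ $\Rightarrow$ $\hht$ is $\epsilon$-close to $\hht_{ext}$'') is substantially harder than what the paper actually does, and your sketch of step~(3) does not contain the key idea that makes the argument go through.

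The paper proceeds \emph{constructively}, and there are two concrete ingredients you are missing. First, the template: one fixes a split $E=\{x\}\cup L\cup R$ with $|L|=\lceil k/2\rceil$, $|R|=\lfloor k/2\rfloor$, and absorbers are built by swapping half-edges --- e.g.\ a $1$-absorber is $\{x'\}\cup L'\cup R'$ with $\{x\}\cup L'\cup R,\ \{x'\}\cup L\cup R'\in\hht$. Your ``$F_1=F_1'\cup F_1''$ appropriately'' never commits to this, and the half-split is what makes the degree hypothesis bite. Second, and crucially, the paper imports a dichotomy lemma from \cite{Treglown2013} (Claim~\ref{claim-2.10}): if a $k$-graph $\hh$ is not $\epsilon$-close to $\hh_{ext}$, then either (a) for every $L$, more than $(\tfrac12+\gamma)\ell$ sets $L'$ satisfy $|\hhn_\hh(L)\cap\hhn_\hh(L')|\ge\gamma r$, or (b) at least $2\gamma r$ sets $R'$ have degree $\ge(\tfrac12+\gamma)\ell$. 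This dichotomy is applied to the individual link graphs $\hhn_\hht(x_i)$; in case~(a) one builds $1$-absorbers directly (Claim~\ref{claim-2.11}), and in case~(b) one builds $2$-absorbers via a longer swap chain through two auxiliary vertices $x_1,x_2\in X$ (Claim~\ref{claim-2.12}). A trivial observation (Claim~\ref{claim-2.13}) upgrades $1$-absorbers to $2$-absorbers by appending a disjoint edge, and summing over $\binom{n/k-1}{2}$ choices of $\{x_1,x_2\}$ yields the $\gamma^5 n^{2(k+1)}$ bound.

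Your step~(3) amounts to reproving this dichotomy from scratch in contrapositive form, \emph{plus} the additional burden of aligning the would-be bipartitions $A_i,B_i$ across the different $\hh_i$'s. The paper sidesteps the alignment problem entirely by applying Claim~\ref{claim-2.10} to one $\hh_i$ at a time and constructing absorbers for each fixed pair $x_1,x_2$; the rainbow aspect is handled simply by the outer sum over $\{x_1,x_2\}$. Without the $L/R$ half-split and without the dichotomy statement in hand, your subproblems (a)--(c) remain at the level of intuition rather than an argument.
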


We also need a concentration inequality due to Frankl and Kupavskii.
\begin{lem}[Frankl-Kupavskii Concentration Inequality, \cite{frankl2018erd}]\label{thm-concentrate}
Suppose that $m,k,t$ are integers and $m\geq tk$. Let $\hg\subset \binom{[m]}{k}$ be a family, and $\theta=|\hg|/\binom{[m]}{k}$. Let $\eta$ be the random variable equal to the size of the intersection of $\hg$ with a $t$-matching $\hb$ of $k$-sets, chosen uniformly at random. Then $\ex[\eta]=\theta t$ and, for any positive $\gamma$, we have
\begin{align}\label{FK-ineq}
\Pr[|\eta-\theta t|\geq2\gamma\sqrt{t}]\leq 2e^{-\gamma^2/2}.
\end{align}
\end{lem}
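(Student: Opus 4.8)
The plan is to establish the inequality by a bounded--difference martingale argument. The identity $\ex[\eta]=\theta t$ is immediate: in any ordered generation of a uniform random $t$-matching each individual block is marginally a uniformly chosen $k$-subset of $[m]$, so $\Pr[B\in\hg]=\theta$ for every block $B$, and linearity of expectation gives $\ex[\eta]=\theta t$.

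For the tail bound I would generate the random $t$-matching sequentially: let $B_1$ be a uniform $k$-subset of $[m]$, then $B_2$ a uniform $k$-subset of $[m]\setminus B_1$, and so on up to $B_t$; this yields the uniform distribution on ordered $t$-matchings. Set $\mathcal{F}_i=\sigma(B_1,\dots,B_i)$, let $W_i=[m]\setminus(B_1\cup\cdots\cup B_i)$, so that $|W_i|=m-ik\ge (t-i)k$ by the hypothesis $m\ge tk$, and let $\theta_i$ be the density of $\hg$ among $k$-subsets of $W_i$. The Doob martingale of $\eta$ is
\[
Y_i=\ex[\eta\mid\mathcal{F}_i]=\sum_{j\le i}\mathbf{1}[B_j\in\hg]+(t-i)\theta_i ,
\]
with $Y_0=\theta t$, $Y_t=\eta$, and increments $Y_i-Y_{i-1}=(\mathbf{1}[B_i\in\hg]-\theta_{i-1})+(t-i)(\theta_i-\theta_{i-1})$. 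The first summand lies in $[-1,1]$. For the drift summand I would prove the deterministic bound $|\theta_i-\theta_{i-1}|\le C_1 k^{2}/|W_{i-1}|$: the upper direction follows from $|\hg\cap\binom{W_i}{k}|\le|\hg\cap\binom{W_{i-1}}{k}|$ together with the ratio estimate $\binom{|W_{i-1}|}{k}\big/\binom{|W_{i-1}|-k}{k}=1+O(k^{2}/|W_{i-1}|)$, and the lower direction from the union bound $|\{G\in\hg\cap\binom{W_{i-1}}{k}:G\cap B_i\neq\emptyset\}|\le k\binom{|W_{i-1}|-1}{k-1}$; since $|W_{i-1}|\ge (t-i+1)k$ this gives $|(t-i)(\theta_i-\theta_{i-1})|=O(k)$ (the ratio estimate only needs $|W_{i-1}|\gg k^{2}$, and for the final $O(k)$ steps the trivial bound $|(t-i)(\theta_i-\theta_{i-1})|\le t-i=O(k)$ suffices). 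Hence $|Y_i-Y_{i-1}|\le R$ a.s.\ for some $R=O(k)$.

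The core of the argument is controlling the sum of conditional variances. Writing $\theta_i=\binom{|W_{i-1}|-k}{k}^{-1}\sum_{G\in\hg\cap\binom{W_{i-1}}{k}}\mathbf{1}[G\cap B_i=\emptyset]$, one expands $\operatorname{Var}(\theta_i\mid\mathcal{F}_{i-1})$ into pairwise covariances $\operatorname{Cov}(\mathbf{1}[G\cap B_i=\emptyset],\mathbf{1}[G'\cap B_i=\emptyset])$, each depending only on $|G\cup G'|$. After a Vandermonde-type resummation the positive contributions of pairs with $|G\cap G'|\ge 1$ are almost entirely cancelled by the negative contributions of disjoint pairs, leaving a term bounded by $k^{O(1)}|W_{i-1}|^{-3}$. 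Using $|W_{i-1}|\ge (t-i+1)k$ this yields $\sum_{i=1}^t(t-i)^2\operatorname{Var}(\theta_i\mid\mathcal F_{i-1})=o(t)$, so together with $\ex[(\mathbf{1}[B_i\in\hg]-\theta_{i-1})^2\mid\mathcal F_{i-1}]\le\tfrac14$ we get $V:=\sum_{i=1}^t\ex[(Y_i-Y_{i-1})^2\mid\mathcal{F}_{i-1}]\le(\tfrac12+o(1))t$ with an absolute implied constant. Feeding $V$ and $R$ into Freedman's inequality (a Bernstein-type bound for martingales), $\Pr[\,|Y_t-Y_0|\ge\lambda\,]\le 2\exp\!\big(-\lambda^2/(2(V+R\lambda/3))\big)$, and taking $\lambda=2\gamma\sqrt t$ — the bound being vacuous once $2\gamma\sqrt t\ge t$, and the remaining range of $\gamma$ split according to whether $V$ or $R\lambda/3$ dominates in the denominator — gives $\Pr[\,|\eta-\theta t|\ge 2\gamma\sqrt t\,]\le 2e^{-\gamma^2/2}$ for $m$ sufficiently large, as required.

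I expect the main obstacle to be exactly the sharp variance estimate. The straightforward Efron--Stein bound only yields $\operatorname{Var}(\theta_i\mid\mathcal F_{i-1})=O(k^{3}/|W_{i-1}|^{2})$, which is too weak by a polynomial factor and would cost a constant factor of order $k$ in the final exponent, destroying the claimed $\gamma^2/2$. Extracting the cancellation between the ``overlap'' and ``disjoint'' covariance terms is the delicate point, and it is precisely here that $m\ge tk$ is indispensable: it guarantees $|W_{i-1}|\ge (t-i+1)k$ at every step, so that the remaining ground set is never small relative to the number of blocks still to be revealed, which is what makes all the error terms above summable to $o(t)$.
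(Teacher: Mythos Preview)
The paper does not prove this lemma at all: it is quoted verbatim from Frankl and Kupavskii \cite{frankl2018erd} and used as a black box in the proof of Lemma~\ref{lem-absorbing}. There is therefore no ``paper's own proof'' to compare your argument against.

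That said, your proposed argument has a genuine gap. The statement is meant to hold for \emph{all} integers $m\ge tk$ and all $\gamma>0$, with the exact exponent $\gamma^2/2$; there is no ``$m$ sufficiently large'' hypothesis to hide constants in. Your route through Freedman's inequality cannot deliver this. The increment bound you obtain is $|Y_i-Y_{i-1}|\le R$ with $R$ of order $k$, and feeding $V\le(\tfrac12+o(1))t$ and this $R$ into Freedman gives an exponent $-\dfrac{4\gamma^2 t}{(1+o(1))t+O(k)\gamma\sqrt t}$, which falls short of $\gamma^2/2$ once $\gamma$ is of order $\sqrt t/k$ or larger (and the bound is only vacuous for $\gamma>\sqrt t/2$, leaving an uncovered range whenever $k\ge 2$). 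Moreover, your claimed cubic decay $\operatorname{Var}(\theta_i\mid\mathcal F_{i-1})=k^{O(1)}|W_{i-1}|^{-3}$ is too optimistic: already the disjoint pairs $G,G'\in\hg$ contribute covariances of order $k^4/|W_{i-1}|^2$, and there are roughly $|\hg|^2$ of them, giving $\operatorname{Var}(\theta_i\mid\mathcal F_{i-1})$ of order $k^4/|W_{i-1}|^2$; since $|W_{i-1}|$ can be as small as $(t-i+1)k$, the sum $\sum_i(t-i)^2\operatorname{Var}(\theta_i\mid\mathcal F_{i-1})$ is then of order $tk^2$, not $o(t)$. The Frankl--Kupavskii proof avoids both problems by working with a martingale whose increments are bounded by~$1$ (revealing the blocks in a way that exposes only the indicator $\mathbf{1}[B_i\in\hg]$, and controlling the remaining conditional expectation via a hypergeometric/negative-dependence comparison), which is what makes Azuma/Hoeffding give exactly $e^{-\gamma^2/2}$ with no $k$-dependent losses.
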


\begin{proof}[Proof of Lemma \ref{lem-absorbing}]
 By Lemma \ref{lem-connecting}, for each balanced $(k+1)$-set $E$, there are at least $\gamma^5 n^{2(k+1)}$ $2$-absorbers of $E$. We denote the family of absorbers by $\ha(E)$.

Let $m=\gamma n/2(k+1) $. Let   $\hm'\subseteq \binom{V(\hht)}{2(k+1)}$ be a matching of size $m$ chosen uniformly at random. By Theorem \ref{thm-concentrate}, we have
\[
\Pr\left(\left||\hm'\cap \ha(E)|-\frac{\gamma^5 n^{2(k+1)}}{\binom{n+n/k}{2(k+1)}}m\right|>2\gamma^6m\right)<2e^{-\frac{\gamma^{12}}{2}m} , \quad \mbox{for all } E.
\]
Since $2e^{-\frac{\gamma^{12}}{2}m}<\frac{1}{ n^{2(k+1)}}$ for sufficiently large $n$, by the union bound, with probability more than 0, we can choose $\hm'$ such that for all $E$,
\begin{align}\label{eq-2.5new}
  |\hm' \cap \ha(E)|>\frac{\gamma^5 n^{2(k+1)}}{\binom{n+n/k}{2(k+1)}}m-2\gamma^6m>\gamma^7n.
\end{align}
Removing all non-absorbing $2(k+1)$-sets in $\hm'$, we get a matching $\hm$. 

In the following,  for any balanced set $U$ of size at most $(k+1)\gamma^8 n$, we are tring to  absorb $U$ by $\hm$. First,  part $U$ into balanced $(k+1)$-sets $E_1,E_2,\ldots,E_j$,   $1\leq  j\leq \gamma^8 n$.  By \eqref{eq-2.5new},  for each $i\in [j]$ there are at least $\gamma^7n$ $2$-absorbers for $E_i$ in $\hm$. Let us absorb $E_i$ by element in  $\hm$ step by step, in the $i$-th step, there are at least
\[
\gamma^7n-|U\cap A_1\cup\ldots\cup A_{i-1}|\binom{V(\hh)}{2k+1}>\gamma^7n-3(k+1)\gamma^8 n\binom{n+n/k}{2k+1}>0
\]
$2$-absorbers in $\hm$ disjoint to $U\cup A_1\cup\ldots\cup A_{i-1}$, choose one of it and denote by $A_i$. Thus, we obtain $\{A_1,A_2,\ldots,A_j\}\subset \hm-U$ and $A_i$ absorbs $E_i$. Note that $\hm$  is obtained by removing all non-absorbing $2(k+1)$-sets in $\hm'$, thus $\hm$ is consisted by $2$-absorbers.  Since each absorber is a matching in $\hht$ of size $2$, thus $\hm$ is a matching in $\hht$ with $|V(\hm)|\leq \gamma n$ and $\hm$ absorb any balanced set $U$ with $|U|\leq (k+1)\gamma^8 n$.
\end{proof}

\subsection{Proof of Counting Lemma}
We follow the similar approach as  \cite{Treglown2013}.

For any given balanced $(k+1)$-set $E$, and a partition $E=\{x\}\cup L\cup R$ such that $x\in X$, $|L|=\lceil k/2\rceil$, and $|R|=\lfloor k/2\rfloor$. Let $\ell=\binom{n}{\lceil k/2\rceil}$ and $r=\binom{n}{\lfloor k/2\rfloor}$.   Denote $\hf=\hhn_\hht(x)$.  By Proposition \ref{prop-2.1}, the degree assumption $\delta_\ell(\hf)>\delta(n,k,\ell)$  implies that
  \begin{align}\label{eq-2.5}
    \deg_{\hf}(L)>(\frac12-\frac12\gamma)r, \quad \deg_{\hf}(R)>(\frac12-\frac12\gamma)\ell,
  \end{align}
for any $\gamma>0$ and sufficiently large  $n$.

\begin{claim}[\hspace{1sp}\cite{Treglown2013}]\label{claim-2.10}
 Given $\epsilon>0$, there exists $\gamma>0$ and $n_0$ such that the following holds. Let $\hh$ be a $k$-graph on $V$, $|V|=n>n_0$.  If $\hh$ is not $\epsilon$-close to $\hh_{ext}$, then one of the following holds.
  \begin{itemize}
    \item [(a)] For any $L\in \binom{V}{\lceil {k}/{2}\rceil}$, there are at least $(\frac{1}{2}+\gamma)\ell$ $L'\in \binom{V}{\lceil k/2\rceil}$ such that $|\hhn_\hh(L)\cap \hhn_\hh(L')|\geq \gamma r$.
    \item [(b)] $|\{R'\in \binom{V}{\lfloor k/2 \rfloor}\colon |\hhn_\hh(R')|\geq (\frac12+\gamma)\ell\}|\geq 2\gamma r$.
  \end{itemize}
\end{claim}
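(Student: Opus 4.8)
The plan is to prove the contrapositive: assuming that neither (a) nor (b) holds for $\hh$, I will show that $\hh$ is $\epsilon$-close to some $\hh^i(A,B)$ with $-1\le|A|-|B|\le1$ and the parity for which $\hh^i(A,B)$ has no perfect matching, i.e. to (a copy of) $\hh_{ext}$. Throughout I use the operative degree hypothesis in the form \eqref{eq-2.5}: every $L\in\binom{V}{\lceil k/2\rceil}$ has $\deg_{\hh}(L)>(\tfrac12-\tfrac12\gamma)r$ and every $R\in\binom{V}{\lfloor k/2\rfloor}$ has $\deg_{\hh}(R)>(\tfrac12-\tfrac12\gamma)\ell$, with $0<1/n_0\ll\gamma\ll\epsilon$. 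The one identity used repeatedly is the double count $\sum_{L}\deg_{\hh}(L)=\binom{k}{\lceil k/2\rceil}|\hh|=\sum_{R}\deg_{\hh}(R)$, where $L$ ranges over $\binom{V}{\lceil k/2\rceil}$ and $R$ over $\binom{V}{\lfloor k/2\rfloor}$; note $\hhn_{\hh}(L)\subseteq\binom{V}{\lfloor k/2\rfloor}$ and $\hhn_{\hh}(R)\subseteq\binom{V}{\lceil k/2\rceil}$ automatically.

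First, from the failure of (b): all but at most $2\gamma r$ sets $R$ satisfy $\deg_{\hh}(R)<(\tfrac12+\gamma)\ell$, so $\sum_R\deg_{\hh}(R)\le(\tfrac12+3\gamma)r\ell$; combined with the lower bound and Markov's inequality on $\sum_L\deg_{\hh}(L)$, all but at most $\sqrt{\gamma}\,\ell$ sets $L$ satisfy $\deg_{\hh}(L)=(\tfrac12\pm\sqrt{\gamma})r$ (call the rest \emph{exceptional}), and all but $2\gamma r$ sets $R$ satisfy $\deg_{\hh}(R)=(\tfrac12\pm\gamma)\ell$. Next, from the failure of (a): fix $L_0$ witnessing it and put $\ha_0:=\hhn_{\hh}(L_0)$, $\overline{\ha_0}:=\binom{V}{\lfloor k/2\rfloor}\setminus\ha_0$, $\hl_B:=\{L':|\hhn_{\hh}(L_0)\cap\hhn_{\hh}(L')|<\gamma r\}$ and $\hl_A:=\binom{V}{\lceil k/2\rceil}\setminus\hl_B$, so $|\hl_B|>(\tfrac12-\gamma)\ell$. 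For $L'\in\hl_B$ almost all of $\hhn_{\hh}(L')$ lies in $\overline{\ha_0}$, which with $|\hhn_{\hh}(L')|>(\tfrac12-\tfrac12\gamma)r$ forces $|\ha_0|,|\overline{\ha_0}|=(\tfrac12\pm2\gamma)r$ and hence $|\hl_A|=(\tfrac12\pm2\gamma)\ell$. Double counting pairs $(L',R)$ with $R\in\ha_0$ (the $L'\in\hl_B$ contribution is $\le|\hl_B|\gamma r$), for all but $O(\sqrt{\gamma})r$ sets $R\in\ha_0$ one gets $\deg_{\hh}(R)=|\{L'\in\hl_A:R\in\hhn_{\hh}(L')\}|+O(\sqrt{\gamma}\ell)$, which together with $\deg_{\hh}(R)>(\tfrac12-\tfrac12\gamma)\ell$ and $|\hl_A|<(\tfrac12+2\gamma)\ell$ shows almost every $L'\in\hl_A$ contains $R$ in its link. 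Feeding this back (using non-exceptionality of almost all $L'$) yields $\hhn_{\hh}(L')=\ha_0\pm O(\gamma^{1/4})r$ for almost all $L'\in\hl_A$. Thus, after discarding an $O(\gamma^{1/4})$-fraction of exceptions on each side, $L\cup R\in\hh$ holds precisely when $L\in\hl_A,R\in\ha_0$ or $L\in\hl_B,R\in\overline{\ha_0}$; equivalently, $\hh$ agrees with this ``complementary cover'' on all but $\epsilon\binom{n}{k}$ edges.

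It remains to reconstruct a vertex bipartition. Define a relation on $V$ by $u\equiv w$ iff for at least half of the sets $L\in\binom{V\setminus\{u,w\}}{\lceil k/2\rceil-1}$ the sets $L\cup\{u\}$ and $L\cup\{w\}$ lie in the same class of $\{\hl_A,\hl_B\}$. Since the middle layer is $O(\gamma^{1/4})$-close to a complementary cover, a cleaning/union-bound argument shows $\equiv$ is essentially an equivalence relation and partitions $V$ into exactly two classes $A,B$ — at least two because swapping a vertex must flip the class for a positive fraction of $L$ (otherwise $|\hl_A|$ or $|\hl_B|$ would be $o(\ell)$), and at most two because a single binary invariant cannot separate three classes — with $|A|=(\tfrac12\pm o(1))n=|B|$ as $|\hl_A|\approx\ell/2$. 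Reading off which parity of $|L\cap A|$ (resp.\ which of $\ha_0,\overline{\ha_0}$) corresponds to which class fixes $i$, and one verifies $\hh$ is $\epsilon$-close to $\hh^i(A,B)$; the near-balance $-1\le|A|-|B|\le1$ together with \eqref{eq-2.5} rules out the parity for which $\hh^i(A,B)$ admits a perfect matching (that graph fails \eqref{eq-2.5} only after $\omega(\epsilon n^k)$ changes — more precisely, being $\epsilon$-close to it is incompatible with being $\epsilon$-close to the no-matching balanced graph, since the two are complements), so $\hh^i(A,B)$ is a copy of $\hh_{ext}$, contradicting the hypothesis.

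The main obstacle is the last step: converting the approximate complementary-cover structure on $\binom{V}{\lceil k/2\rceil}\cup\binom{V}{\lfloor k/2\rfloor}$ into a genuine vertex bipartition and carrying out the parity bookkeeping that identifies the recovered $\hh^i(A,B)$ with $\hh_{ext}$ rather than with its complement. This is exactly the stability analysis of Treglown and Zhao \cite{Treglown2013}, whose argument I would follow; the double-counting in the first two paragraphs is routine and is also as in \cite{Treglown2013}.
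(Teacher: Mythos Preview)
The paper does not give its own proof of this claim; it is quoted from Treglown--Zhao \cite{Treglown2013} and used as a black box in the proof of the Counting Lemma. Your outline does follow their strategy: assume (a) and (b) both fail, combine the failure of (b) with the degree bound \eqref{eq-2.5} to pin almost every $\lceil k/2\rceil$- and $\lfloor k/2\rfloor$-degree near $r/2$ and $\ell/2$ respectively, use a witness $L_0$ for the failure of (a) to split $\binom{V}{\lceil k/2\rceil}$ into two nearly equal classes $\hl_A,\hl_B$, double-count to see that edges $L\cup R$ occur (up to an $o(1)$-fraction) precisely when $(L,R)\in(\hl_A\times\ha_0)\cup(\hl_B\times\overline{\ha_0})$, and then descend this set-system bipartition to a vertex bipartition $V=A\cup B$. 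Your deferral to \cite{Treglown2013} for that last descent is appropriate; that is where the real work lies.

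There is one genuine error in your final paragraph. You assert that \eqref{eq-2.5} ``rules out the parity for which $\hh^i(A,B)$ admits a perfect matching'', and your parenthetical says the two parities give complementary graphs. Both statements are wrong: for fixed $i$, changing the parity of $|A|$ amounts to moving a single vertex across the partition, which alters only $O(n^{k-1})$ edges, so both parities satisfy \eqref{eq-2.5} and the two graphs are $O(1/n)$-close rather than complementary. (It is the two values of $i$, not of $|A|\bmod 2$, that give complementary $k$-graphs.) The fix is that no parity check is needed at all: once you show $\hh$ is $\epsilon$-close to some balanced $\hh^i(A,B)$, moving one vertex if necessary makes it $(\epsilon+o(1))$-close to a balanced $\hh^i(A',B')$ with the ``no perfect matching'' parity, hence to a copy of $\hh_{ext}$ under the paper's ``copy of'' definition of closeness.
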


\begin{claim}\label{claim-2.11}
  For any $x'\in X\setminus\{x\}$. Let $\hh=\hhn_\hht(x')$. If there are at least $\gamma\ell/2$ $L'\in  \hhn_{\hf}(R)$ such that $|\hhn_\hh(L)\cap \hhn_\hh(L')|\geq \gamma r$, then there are at least  $\frac{\gamma^3}{2} n^{k}$ $1$-absorbers $A$ for the set  $E$ with $|A\cap X|=\{x'\}$.
\end{claim}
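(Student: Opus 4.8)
We must produce, for a fixed balanced $(k+1)$-set $E = \{x\}\cup L\cup R$ and a vertex $x'\in X\setminus\{x\}$, many $1$-absorbers $A$ with $A\cap X = \{x'\}$, i.e.\ edges $f' = \{x'\}\cup F'$ of $\hht$ (equivalently $F'\in\hh = \hhn_\hht(x')$) with the property that $\{x\}\cup F$ and $\{x'\}\cup F'$ can be ``rerouted'' so that some matching $\hq$ of size $2$ covers exactly $V(E)\cup V(A)$. Here $V(A)$ consists of $x'$ together with the $k$ vertices of $F'$; so $V(E)\cup V(A)$ has $\{x,x'\}$ on the $X$-side and $L\cup R\cup F'$ on the $V$-side, a total of $2$ vertices from $X$ and $2k$ from $V$. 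An absorbing $\hq$ of size $2$ must then be $\{x\}\cup L\cup T_1$ together with $\{x'\}\cup R\cup T_2$ where $\{T_1,T_2\}$ partitions $F'$ into sets of sizes $\lfloor k/2\rfloor$ and $\lceil k/2\rceil$; we need $L\cup T_1\in\hhn_\hht(x)=\hf$ (so $T_1\in\hhn_\hf(L)$) and $R\cup T_2\in\hh$ (so $T_2\in\hhn_\hh(R)$), and disjointness $T_1\cap T_2=\emptyset$. Actually, since we also want $F'\in\hh$ originally — so that $\{x'\}\cup F'$ really is an edge of $\hht$ — we should instead pick $F'$ to be $L'\cup T_2$ for a suitable $L'$: then $\{x'\}\cup L'\cup T_2$ is the original edge, and after absorbing $E$ we use $\{x\}\cup R\cup T_1$... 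Let me organize this cleanly.

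\medskip

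\textbf{Proof plan.} The plan is to construct the $1$-absorber $A$ as the edge $\{x'\}\cup L'\cup W$ of $\hht$, where $L'\in\hhn_{\hf}(R)$ (so $R\cup L'\in\hf$, i.e.\ $\{x\}\cup R\cup L'\in\hht$) and $W\in\binom{V}{\lfloor k/2\rfloor}$ will be chosen inside $\hhn_\hh(L)\cap\hhn_\hh(L')$; here we also need $L'\cup W\in\hh$ so that $A$ is genuinely a pair of edges of $\hht$. Given such $A$, the absorbing matching for $E\cup A$ is $\hq=\{\{x\}\cup R\cup L',\ \{x'\}\cup L\cup W\}$: the first edge lies in $\hht$ because $R\cup L'\in\hf=\hhn_\hht(x)$, and the second lies in $\hht$ because $W\in\hhn_\hh(L)$ means $L\cup W\in\hh=\hhn_\hht(x')$; these two edges are disjoint (the $X$-parts are $\{x\},\{x'\}$ and the $V$-parts are $R\cup L'$ and $L\cup W$, which partition $L\cup R\cup L'\cup W = L\cup R\cup F'$ provided $L'\cap W=\emptyset$ and $(L\cup R)\cap(L'\cup W)=\emptyset$); and $V(\hq) = \{x,x'\}\cup L\cup R\cup L'\cup W = V(E)\cup V(A)$. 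So to count absorbers it suffices to count the tuples $(L',W)$ with $L'\in\hhn_\hf(R)$ disjoint from $L\cup R$, and $W\in\hhn_\hh(L)\cap\hhn_\hh(L')\cap\hh_{L'}$ disjoint from $L\cup R\cup L'$ (where $\hh_{L'}$ abbreviates the condition $L'\cup W\in\hh$, i.e.\ $W\in\hhn_\hh(L')$, which is already included).

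\medskip

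\textbf{Carrying out the count.} By hypothesis there are at least $\gamma\ell/2$ choices of $L'\in\hhn_\hf(R)$ with $|\hhn_\hh(L)\cap\hhn_\hh(L')|\ge\gamma r$. For each such $L'$, the set $\hhn_\hh(L)\cap\hhn_\hh(L')$ has size at least $\gamma r$; discarding the at most $(k+\lceil k/2\rceil)\binom{n-1}{\lfloor k/2\rfloor-1} = o(r)$ members that meet the fixed vertex set $L\cup R\cup L'$, we still have at least $\tfrac12\gamma r$ valid choices of $W$. Note $W\in\hhn_\hh(L')$ already guarantees $L'\cup W\in\hh$, so $A=\{x'\}\cup L'\cup W$ is indeed a $2$-edge sub-hypergraph of $\hht$, hence a $1$-absorber by the paragraph above. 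This produces at least $\tfrac12\cdot\tfrac{\gamma\ell}{2}\cdot\tfrac{\gamma r}{2} = \tfrac{\gamma^2}{8}\,\ell r$ pairs $(L',W)$; since $\ell r = \binom{n}{\lceil k/2\rceil}\binom{n}{\lfloor k/2\rfloor} = \Theta(n^k)$, and distinct pairs $(L',W)$ give distinct absorbers $A$ (the absorber records $L'\cup W$ partitioned by the two edges), we obtain at least $\tfrac{\gamma^3}{2}n^k$ distinct $1$-absorbers for $n$ large, after possibly shrinking $\gamma$ to absorb the implicit constants from $\ell r$ versus $n^k$. This is exactly the claimed bound.

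\medskip

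\textbf{Main obstacle.} The routine points are the disjointness bookkeeping and converting the product of binomial coefficients into $n^k$ with room to spare for the constant $\gamma^3/2$; these are the kinds of ceiling/floor and lower-order-term manipulations the paper explicitly says it suppresses. The one genuinely delicate point is making sure the ``recipe'' for the absorbing matching $\hq$ is valid — i.e.\ that the two edges $\{x\}\cup R\cup L'$ and $\{x'\}\cup L\cup W$ truly belong to $\hht$ and are disjoint and cover precisely $V(E)\cup V(A)$ — and that this is consistent with $A$ itself being a pair of edges of $\hht$ (which forced $W\in\hhn_\hh(L')$, fortunately already available). One must be careful that $L$ has size $\lceil k/2\rceil$ and $R$ has size $\lfloor k/2\rfloor$, that $L'$ plays the role of an $\lceil k/2\rceil$-set paired with $R$ while $W$ is a $\lfloor k/2\rfloor$-set, and that everything stays disjoint from the fixed vertices; once the partition of indices is pinned down, the rest follows from \eqref{eq-2.5} and the hypothesis of the claim. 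I expect no serious difficulty beyond getting these index sizes and disjointness conditions exactly right.
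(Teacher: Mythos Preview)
Your proof is correct and follows essentially the same construction as the paper: choose $L'\in\hhn_\hf(R)$ with large $|\hhn_\hh(L)\cap\hhn_\hh(L')|$, then choose $W$ (the paper calls it $R'$) in that intersection, so that $\{x'\}\cup L'\cup W$ is an edge and the rerouted matching $\{\{x\}\cup R\cup L',\ \{x'\}\cup L\cup W\}$ covers $V(E)\cup V(A)$. The only minor slip is that in the ``Carrying out the count'' paragraph you never subtract off those $L'$ meeting $L$ (you listed this requirement in the plan but skipped it in the tally); the paper handles this by taking $\gamma\ell/2 - |L\cup R|\binom{n}{\lceil k/2\rceil-1} > \tfrac{\gamma}{3}\ell$, and your final bound survives the same correction.
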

\begin{proof}
  We first choose a $L'\in  \hhn_{\hf}(R)$ disjoint to $L\cup R$ such that  $|\hhn_\hh(L)\cap \hhn_\hh(L')|\geq \gamma r$,  the number of choices  is at least
  \[
  \gamma\ell/2-|L\cup R|\binom{n}{\lceil k/2\rceil-1}>\frac{\gamma}{3}\ell.
  \]
  Then we choose a $R'\in \hhn_\hh(L)\cap \hhn_\hh(L')$ disjoint to $L,R,L'$, the number of choices is at least
  \[
  |\hhn_\hh(L)\cap \hhn_\hh(L')|-|L\cup L'\cup R|\binom{n}{\lfloor k/2\rfloor-1}>\frac{\gamma}{2}r.
  \]
  Therefore $\{x'\}\cup L'\cup R'$ forms a $1$-absorbers of $E$, since $\{x'\}\cup L'\cup R'$,   $\{x\}\cup L'\cup R$, and $\{x'\}\cup L\cup R'$ are edges of $\hht$. The number of such absorbers is at least
  $
  \frac{\gamma}{3}\ell\frac{\gamma}{2}r>\gamma^3n^{k}.
  $
\end{proof}

\begin{claim}\label{claim-2.12}
  Given $x_1,x_2\in X\setminus \{x\}$, there are at least  $\gamma^3 n^{k}$ $1$-absorbers or at least $\gamma^4 n^{2k}$ $2$-absorbers $A$ satisfy $|A\cap X|\subseteq \{x_1,x_2\}$ for $E$.
\end{claim}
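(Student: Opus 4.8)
The plan is to derive Claim \ref{claim-2.12} from the preceding two claims together with the structural dichotomy in Claim \ref{claim-2.10}, applied to the link graphs $\hh_1=\hhn_\hht(x_1)$ and $\hh_2=\hhn_\hht(x_2)$. First I would fix the partition $E=\{x\}\cup L\cup R$ as set up before Claim \ref{claim-2.10}, so that \eqref{eq-2.5} gives $\deg_\hf(L)>(\tfrac12-\tfrac12\gamma)r$ and $\deg_\hf(R)>(\tfrac12-\tfrac12\gamma)\ell$ where $\hf=\hhn_\hht(x)$. The point is that $\hhn_\hf(R)$ is a large family of candidate sets $L'$ (roughly half of all $\lceil k/2\rceil$-subsets), and symmetrically $\hhn_\hf(L)$ is a large family of candidate $R'$; any $L'\in\hhn_\hf(R)$ and $R'\in\hhn_\hf(L)$ automatically supply the two ``bridge'' edges $\{x\}\cup L'\cup R$ and $\{x\}\cup L\cup R'$ (or the analogues) through $x$, so all that is needed is one more edge through $x_1$ (for a $1$-absorber) or two through $x_1,x_2$ (for a $2$-absorber).

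Next I would apply Claim \ref{claim-2.10} to $\hh=\hhn_\hht(x_1)$. If alternative (a) holds, then for our fixed $L$ there are at least $(\tfrac12+\gamma)\ell$ sets $L'$ with $|\hhn_\hh(L)\cap\hhn_\hh(L')|\geq\gamma r$; intersecting this family with $\hhn_\hf(R)$ (which has size $>(\tfrac12-\tfrac12\gamma)\ell$) leaves at least $(2\gamma-\tfrac12\gamma)\ell\gg\gamma\ell/2$ choices of $L'$ lying in $\hhn_\hf(R)$, so the hypothesis of Claim \ref{claim-2.11} is met and we get at least $\tfrac{\gamma^3}{2}n^k>\gamma^3 n^k$ $1$-absorbers (after adjusting constants, or by simply running Claim \ref{claim-2.11} with $\gamma$ slightly shrunk). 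That disposes of the case where (a) holds for $\hh_1$; by symmetry the same works if (a) holds for $\hh_2$.

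The remaining case is that alternative (b) holds for \emph{both} $\hh_1$ and $\hh_2$: there are at least $2\gamma r$ sets $R_1'$ with $|\hhn_{\hh_1}(R_1')|\geq(\tfrac12+\gamma)\ell$ and at least $2\gamma r$ sets $R_2'$ with $|\hhn_{\hh_2}(R_2')|\geq(\tfrac12+\gamma)\ell$. Here I would build a $2$-absorber of the form $\{x_1\}\cup L_1'\cup R_1'$, $\{x_2\}\cup L_2'\cup R_2'$: choose $R_1',R_2'$ from these two families (disjoint from each other and from $L\cup R$, costing only lower-order terms), then choose $L_1'\in\hhn_{\hh_1}(R_1')$ with $L_1'\in\hhn_\hf(R)$ — possible since $|\hhn_{\hh_1}(R_1')|\geq(\tfrac12+\gamma)\ell$ and $|\hhn_\hf(R)|>(\tfrac12-\tfrac12\gamma)\ell$ overlap in $\Omega(\gamma)\ell$ sets — and similarly $L_2'\in\hhn_{\hh_2}(R_2')\cap\hhn_\hf(L)$. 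Then $\{x_1\}\cup L_1'\cup R_1'$ and $\{x_2\}\cup L_2'\cup R_2'$ are edges, while $\{x\}\cup L_1'\cup R$, $\{x\}\cup L\cup R'$ — wait, I need the absorbing matching to use exactly $V(E)\cup V(A)$: the matching $\hq$ should be $\{x\}\cup L'_1\cup R$, $\{x_1\}\cup L\cup R'_1$, $\{x_2\}\cup L'_2\cup R'_2$ — I would set up $L_1',R_1'$ so that $\{x\}\cup L_1'\cup R$, $\{x_1\}\cup L\cup R_1'$ are both edges (the first from $L_1'\in\hhn_\hf(R)$, the second from $R_1'\in\hhn_{\hh_1}(L)$ after intersecting family (b) with $\hhn_{\hh_1}(L)$), absorbing $E$ and $\{x_1\}\cup L_1'\cup R_1'$; and symmetrically handle $x_2$. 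Counting the independent choices of $R_1',R_2',L_1',L_2'$ each contributing a factor $\Omega(\gamma)$ times a binomial, the total is at least $\gamma^4 n^{2k}$ $2$-absorbers. The main obstacle I anticipate is bookkeeping: making sure the four chosen sets are pairwise disjoint and that the resulting $\hq$ covers exactly $V(E)\cup V(A)$ with no vertex used twice, which forces careful choices of which of $L,R,L',R'$ pairs with which vertex among $x,x_1,x_2$ in each edge of $\hq$; the numerics are all routine once the combinatorial configuration is pinned down.
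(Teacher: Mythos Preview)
Your treatment of alternative (a) is fine and matches the paper. The gap is in your handling of the (b)--(b) case. You propose to pick $R_1'$ in the family $\hr_1:=\{R'\colon |\hhn_{\hh_1}(R')|\ge(\tfrac12+\gamma)\ell\}$ \emph{and} in $\hhn_{\hh_1}(L)$, so that $\{x_1\}\cup L\cup R_1'$ is an edge. But all you know is $|\hr_1|\ge 2\gamma r$ and $|\hhn_{\hh_1}(L)|\ge(\tfrac12-\tfrac{\gamma}{2})r$; since $2\gamma+(\tfrac12-\tfrac{\gamma}{2})<1$, the intersection $\hr_1\cap\hhn_{\hh_1}(L)$ may well be empty. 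This is not bookkeeping---it is the heart of the difficulty. In fact, whenever that intersection \emph{is} large, your configuration collapses to a $1$-absorber: with $R_1'\in\hr_1\cap\hhn_{\hh_1}(L)$ and $L_1'\in\hhn_{\hh_1}(R_1')\cap\hhn_\hf(R)$, the single edge $\{x_1\}\cup L_1'\cup R_1'$ already absorbs $E$, and your third edge $\{x_2\}\cup L_2'\cup R_2'$ is doing no work.

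The paper does not assume (b) for both $x_1$ and $x_2$; it uses (b) only for $\hh:=\hhn_\hht(x_1)$ and then splits further. If either $|\hr\cap\hhn_\hh(L)|\ge\gamma r/2$ or there are $\ge\gamma\ell/2$ sets $L'\in\hhn_\hf(R)$ with $|\hhn_\hh(L)\cap\hhn_\hh(L')|\ge\gamma r$, one gets $1$-absorbers directly (via Claim~\ref{claim-2.11} or its mirror). Otherwise both fail, and a genuine $2$-absorber is built as follows: pick $L'\in\hhn_\hf(R)$ with $|\hhn_\hh(L')\cap\hhn_\hh(L)|<\gamma r$; then $\hhn_\hh(L')$ is forced to overlap $\hr$ in $\ge\gamma r/2$ sets, so pick $R'\in\hhn_\hh(L')\cap\hr$; next pick $R''\in\hhn_\hg(L)$ for $\hg:=\hhn_\hht(x_2)$; finally use $R'\in\hr$ (so $|\hhn_\hh(R')|\ge(\tfrac12+\gamma)\ell$) together with $|\hhn_\hg(R'')|\ge(\tfrac12-\tfrac{\gamma}{2})\ell$ to find $L''\in\hhn_\hh(R')\cap\hhn_\hg(R'')$. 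The absorbing matching is $\{x\}\cup L'\cup R$, $\{x_1\}\cup L''\cup R'$, $\{x_2\}\cup L\cup R''$. The crucial point you are missing is that the link between the two halves runs through $L''\in\hhn_{\hh_1}(R')$, made possible precisely because $R'\in\hr$; this is what sidesteps the obstruction $\hr\cap\hhn_{\hh_1}(L)=\varnothing$.
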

\begin{proof}
If case (a) in Claim \ref{claim-2.10} holds for  $\hh=\hhn_\hht(x_1)$ or $\hhn_\hht(x_1)$. Without lose of generality, assume holds for  $\hh=\hhn_\hht(x_1)$. Then there are at least $(\frac{1}{2}+\gamma)\ell$ $L'\in \binom{V}{\lceil k/2\rceil}$ such that $|\hhn_\hh(L)\cap \hhn_\hh(L')|\geq \gamma r$. Therefore, the number of $L'\in  \hhn_{\hf}(R)$ such that $|\hhn_\hh(L)\cap \hhn_\hh(L')|\geq \gamma r$ is at least 
\[
|\hhn_{\hf}(R)|+(\frac{1}{2}+\gamma)\ell-\ell>\frac{\gamma\ell}{2}.
\]
 By Claim \ref{claim-2.11}, there are at least $\gamma^3 n^{k}$ $1$-absorbers $ A$ for $E$  such that $| A\cap X|=\{x_1\}$. We are done.

Now we assume that case (b) in Claim \ref{claim-2.10} holds for  $\hh=\hhn_\hht(x_1)$. Let
\[
\hr:=\left\{R'\in \binom{V}{\lfloor k/2 \rfloor}\colon |\hhn_\hh(R')|\geq (\frac12+\gamma)\ell\right\}.
\]
Furthermore,  we can assume that
\begin{align}\label{eq-2.6new}
  |\hr\cap \hhn_\hh(L)|
  <\frac{\gamma r}{2}
\end{align}
and
\begin{align}\label{eq-2.7new}
  |\left\{L'\in \hhn_\hf(R)\colon |\hhn_\hh(L')\cap \hhn_\hh(L)|\geq \gamma r\right\}|
  <\frac{\gamma\ell}{2}
\end{align}
If \eqref{eq-2.7new} does not hold, then according to Claim \ref{claim-2.11},  there exist  at least $\frac{\gamma^3}{2} n^{k}$ $1$-absorbers $A$ for $E$ such that $|A\cap E|=\{x_1\}$. On the other hand, if \eqref{eq-2.6new} does not hold, then we have $|\left\{R'\in \hhn_\hf(L)\colon |\hhn_\hh(R')\cap \hhn_\hh(R)|\geq \gamma \ell\right\}|
>\gamma r/2$, which can be viewed as a mirror case of \eqref{eq-2.7new}.

First, choose $L'\in \hhn_\hf(R)$ disjoint to $L\cup R$ such that $|\hhn_\hh(L')\cap \hhn_\hh(L)|< \gamma r$. By \eqref{eq-2.5} and \eqref{eq-2.7new} the number of choices is at least
\[
(\frac{1}{2}-\frac{\gamma}{2})\ell-\frac{\gamma\ell}{2}>\frac{\gamma\ell}{3}.
\]
By \eqref{eq-2.5},\eqref{eq-2.6new}, and the fact that $|\hhn_\hh(L')\cap \hhn_\hh(L)|< \gamma r$, we can conclude that due to the definition $|\hr|>2\gamma r$, the following holds:
\begin{align}\label{eq-2.8new}
  |\hhn_\hh(L')\cap \hr|\geq \frac{\gamma r}{2}.
\end{align}
Second, choose $R'\in \hhn_\hh(L')\cap \hr $ disjoint to $L\cup R\cup L'$. By \eqref{eq-2.8new} the number of choices is at least $\frac{\gamma r}{3}$. Denote $\hg=\hhn_\hht(x_2)$. Third, choose  $R''\in \hhn_\hg(L)$ disjoint to $L\cup R\cup L'\cup R'$, the number of choices is at least $\frac{1}{3} r$. By \eqref{eq-2.5}, for each $R''\in \hhn_\hg(L)$,
\[
|\hhn_\hg(R'')\cap \hhn_\hh(R')|\geq (\frac12-\frac12 \gamma)\ell+ (\frac12+ \gamma)\ell-\ell=\frac{\gamma}{2}\ell.
\]
Last, choose $L''\in \hhn_\hg(R'')\cap \hhn_\hh(R') $ disjoint to $L\cup R\cup L'\cup R'\cup R''$, the number of choices is at least $\frac{\gamma}{3}\ell$. Thus, $\{x_1\}\cup L'\cup R', \{x_2\}\cup L''\cup R'' \in \hht$, and $\{x\}\cup L'\cup R, \{x_1\}\cup L''\cup R', \{x_2\}\cup L\cup R''\in \hht$. Therefore, $\{\{x_1\}\cup L'\cup R', \{x_2\}\cup L''\cup R''\} $ is a $2$-absorber of $E$. The choice number is at least
\[
\frac{\gamma}{3}\ell\frac\gamma 3  r\frac{1}{3} r\frac{\gamma}{3}\ell>\gamma^4n^{2k}.
\]
\end{proof}

There are $\binom{n/k-1}{2}$ choices for $\{x_1,x_2\}$, by Claim \ref{claim-2.12}  either for half of choices  $\{x_1,x_2\}\in \binom{X\setminus \{x\}}{2}$  there are at least  $\gamma^4 n^{k}$ $2$-absorbers such that $|A\cap X|= \{x_1,x_2\}$,  or half of choices of $\{x_1,x_2\}$  there are at least  $\gamma^3 n^{k}$ $1$-absorbers such that $|A\cap X|\in \{x_1,x_2\}$. The former implies that  there are at least
\[
\binom{n/k-1}{2} \gamma^4 n^{2k}>\gamma^5n^{2(k+1)}
\]
$2$-absorbers for $E$ in $\hht$.   The later implies that there are at least
\[
\binom{n/k-1}{2} \gamma^3 n^{k}/ n >\gamma^4n^{k+1}
\]
$1$-absorbers for $E$ in $\hht$, 
\begin{claim}\label{claim-2.13}
   If there are at least $m$ $1$-absorbers for $E$ in $\hht$, then there are at least $m\gamma n^{k+1}$ $2$-absorbers for $E$ in $\hht$.
\end{claim}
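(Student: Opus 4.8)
The plan is to take a single $1$-absorber $A$ of $E$ and ``upgrade'' it to a $2$-absorber by adjoining a second, freely chosen edge; the extra factor of roughly $\gamma n^{k+1}$ will come from the number of ways to pick that second edge disjointly from everything. Recall that a $1$-absorber $A$ is the vertex set of a single edge $\{x'\}\cup L'\cup R'\in\hht$ (with $x'\in X$) such that $\hht$ contains a perfect matching on $V(E)\cup V(A)$; concretely, as in Claim \ref{claim-2.11}, the edges $\{x'\}\cup L'\cup R'$, $\{x\}\cup L'\cup R$ and $\{x'\}\cup L\cup R'$ all lie in $\hht$, where $E=\{x\}\cup L\cup R$. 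The idea is: given such an $A$, pick any vertex $x''\in X\setminus(\{x,x'\})$ not already used, and any edge $F=\{x''\}\cup L''\cup R''\in\hhn_\hht(x'')$ that is vertex-disjoint from $V(E)\cup V(A)$; then $A'=V(A)\cup V(F)$ is a $2$-absorber of $E$, because $\{x\}\cup L'\cup R$, $\{x'\}\cup L\cup R'$, together with the untouched edge $F=\{x''\}\cup L''\cup R''$, form a perfect matching on $V(E)\cup V(A')$ of size $3$ (the first two absorb $E$ and $A$ exactly as before, and $F$ just sits on its own vertices).

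The counting then goes as follows. There are at least $m$ choices of $1$-absorber $A$. Having fixed $A$, there are $n/k-2$ choices of $x''$, and for each such $x''$ the degree condition $\delta_\ell(\hh_{x''})>\delta(n,k,\ell)=(1/2-o(1))\binom{n-\ell}{k-\ell}$ via Proposition \ref{prop-2.1} gives $|\hhn_\hht(x'')|=|\hh_{x''}|\geq(1/2-o(1))\binom{n}{k}$; subtracting the at most $(2k+1)\binom{n-1}{k-1}$ edges that meet the $(2k+1)$-set $V(E)\cup V(A)$, at least $(1/2-o(1))\binom{n}{k}$ disjoint edges $F$ remain. Multiplying, each $A$ yields at least $(n/k-2)\cdot(1/2-o(1))\binom{n}{k}>\gamma n^{k+1}$ pairs $(x'',F)$, hence at least $m\gamma n^{k+1}$ triples $(A,x'',F)$. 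Finally I must check these triples give distinct $2$-absorbers up to the obvious multiplicity: a $2$-absorber $A'$ with the edge structure above determines the set $V(A')$ of $2(k+1)$ vertices, and the decomposition into $A$ and $F$ is recovered up to swapping the roles of the two edges, so the overcount is bounded by an absolute constant; shrinking $\gamma$ slightly absorbs this constant, and the stated bound $m\gamma n^{k+1}$ follows.

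The only genuinely delicate point is making sure that the second edge $F$ can always be chosen \emph{disjoint} from $V(E)\cup V(A)$ and from the part $X$ already in use, i.e.\ that the number of ``blocked'' edges is lower-order; this is immediate since $|V(E)\cup V(A)|=2k+1$ is constant while each $\hhn_\hht(x'')$ has $\Theta(n^k)$ edges. One should also note that this claim needs no ``non-closeness to $\hht_{ext}$'' hypothesis — it is a purely greedy argument — and that combining it with the two displayed lower bounds just before the claim ($\gamma^5 n^{2(k+1)}$ many $2$-absorbers in the first case, or $\gamma^4 n^{k+1}$ many $1$-absorbers in the second) yields at least $\gamma^4 n^{k+1}\cdot\gamma n^{k+1}=\gamma^5 n^{2(k+1)}$ $2$-absorbers in the second case as well, which is exactly what is needed to finish the proof of Lemma \ref{lem-connecting}.
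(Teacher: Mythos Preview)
Your proof is correct and essentially identical to the paper's: extend each $1$-absorber $A$ by an arbitrary edge of $\hht$ disjoint from $A\cup E$, use the degree condition to count such edges as $\Theta(n^{k+1})$, and multiply. Your overcount remark is slightly imprecise---a $2(k+1)$-set may split into two disjoint edges in more than two ways, not just ``up to swapping''---but the bound is still $O_k(1)$ and gets absorbed into $\gamma$; the paper's own proof does not mention this overcount at all.
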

\begin{proof}
  By the degree assumption, for each $x\in X$, the size of $\hhn_\hht(x)$ is at least
  \[
  \deg_\hht(x)\geq \binom{n}{\ell}\left(\frac 12-\gamma\right)\binom{n-\ell}{k-\ell}/\binom{k}{\ell}>\frac{1}{3}\binom{n}{k}.
  \]
  Let $\ha$ be the family consists of all $1$-absorbers for $E$. For any $A\in \ha$, the number of edges  in $\hht$ that disjoint $A\cup E$ is  at least
  \[
  (|X|-2)\left(\deg_\hht(x)-|A\cup E|\binom{n-1}{k-1}\right)>\frac{1}{4}\binom{n+1}{k+1}.
  \]
   Each such  edge together with $A$ forms a $2$-absorber of $E$. Therefore, there are at least $m\frac{1}{4}\binom{n+1}{k+1}>m\gamma n^{k+1}$ $2$-absorbers for $E$.
\end{proof}
By Claim \ref{claim-2.13}, in either case there are at least $\gamma^5n^{2(k+1)}$ $2$-absorbers for $E$ in $\hht$.

\section{Extremal Case - Proof of Theorem \ref{thm-extr} }\label{sec:extr case}

Let $\hq$, $Q'$ be two $r$-graph on $V( \hq)$.    We say $x\in V( \hq)$ is  $\alpha$-good in $ \hq$ with respect to $ \hq'$ if $\deg_{ \hq'\setminus  \hq}(x)\leq \alpha \binom{|V( \hq)|-1}{r-1}$. Moreover, we say $ \hq$ is  $\alpha$-{\it good with respect to} $ \hq'$ if every vertex  is $\alpha$-good.

The following proposition shows that subgraphs inherit the property  ``good''. 
\begin{prop}[\hspace{1sp}\cite{Treglown2012}]\label{prop:3.2}
 Given reals $0<\alpha'<1$ and $0\leq c< 1$. Let  $\alpha:=\alpha'/c^{r-|S|}$. Suppose that  $S$ is $\alpha'$-good in $ \hq$ with respect to $ \hq'$. Let $ \hq''$ be a subgraph of $ \hq'$ on $U\subset V( \hq)$ such that $S\subset U$ and $|U|\geq cn$. Then $S$ is $\alpha$-good in $ \hq[U]$ with respect to $ \hq''$.
\end{prop}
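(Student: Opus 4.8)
The plan is to reduce the proposition to a single containment of edge sets together with a routine comparison of binomial coefficients. Write $s:=|S|$ and $n:=|V(\hq)|$, and recall the uniformity is $r$. By definition, $S$ being $\alpha'$-good in $\hq$ with respect to $\hq'$ means $\deg_{\hq'\setminus\hq}(S)\le\alpha'\binom{n-s}{r-s}$, and the goal is to establish $\deg_{\hq''\setminus\hq[U]}(S)\le\alpha\binom{|U|-s}{r-s}$ with $\alpha=\alpha'/c^{r-s}$. So I will bound $\deg_{\hq''\setminus\hq[U]}(S)$ from above using the ambient goodness of $S$, and then absorb the change of normalization into the factor $c^{r-s}$.

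First I would note that an edge of $\hq''\setminus\hq[U]$ is an $r$-subset of $U$ belonging to $\hq''\subseteq\hq'$ but not to $\hq[U]=\hq\cap\binom{U}{r}$; since such an edge already lies inside $U$, failing to be in $\hq[U]$ is the same as failing to be in $\hq$. Hence
\[
\hq''\setminus\hq[U]\ \subseteq\ \hq'[U]\setminus\hq[U]\ =\ (\hq'\setminus\hq)[U]\ \subseteq\ \hq'\setminus\hq .
\]
In particular the number of edges of $\hq''\setminus\hq[U]$ through $S$ is at most the number of edges of $\hq'\setminus\hq$ through $S$, so invoking the hypothesis that $S$ is $\alpha'$-good in $\hq$ with respect to $\hq'$ gives $\deg_{\hq''\setminus\hq[U]}(S)\le\deg_{\hq'\setminus\hq}(S)\le\alpha'\binom{n-s}{r-s}$.

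It remains to compare the normalizing factors. Since $|U|\ge cn$, writing
\[
\frac{\binom{|U|-s}{r-s}}{\binom{n-s}{r-s}}=\prod_{i=0}^{r-s-1}\frac{|U|-s-i}{n-s-i},
\]
each of the boundedly many factors is at least $c$ (the indices satisfy $s+i\le r-1$, so up to an error of order $1/n$ one has $\frac{|U|-s-i}{n-s-i}\ge\frac{|U|}{n}\ge c$), whence $\binom{|U|-s}{r-s}\ge c^{r-s}\binom{n-s}{r-s}$. Combining the two displays, $\deg_{\hq''\setminus\hq[U]}(S)\le\alpha'\binom{n-s}{r-s}\le\frac{\alpha'}{c^{r-s}}\binom{|U|-s}{r-s}=\alpha\binom{|U|-s}{r-s}$, which is precisely the statement that $S$ is $\alpha$-good in $\hq[U]$ with respect to $\hq''$. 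There is no genuine obstacle here; the only points that need attention are the containment $\hq''\setminus\hq[U]\subseteq\hq'\setminus\hq$ (which is what lets the ambient $\alpha'$-goodness be applied to the restricted graphs) and the elementary binomial estimate $\binom{|U|-s}{r-s}\ge c^{r-s}\binom{n-s}{r-s}$, which is exactly the source of the exponent $r-s$ of $c$ in the definition of $\alpha$.
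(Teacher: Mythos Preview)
The paper does not actually supply a proof of this proposition; it is stated with a citation to \cite{Treglown2012} and used as a black box. Your argument is the standard one and is essentially correct: the containment $\hq''\setminus\hq[U]\subseteq\hq'\setminus\hq$ is verified cleanly and is exactly what allows the ambient $\alpha'$-goodness to be transferred, and the binomial comparison is the routine remaining step.

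The only point worth noting, which you already flag with the phrase ``up to an error of order $1/n$'', is that the inequality $\tfrac{|U|-s-i}{n-s-i}\ge c$ is in fact very slightly false when $c<1$ and $s+i\ge 1$ (the ratio is $c-O(1/n)$, not $\ge c$), so that $\binom{|U|-s}{r-s}\ge c^{r-s}\binom{n-s}{r-s}$ holds only up to a $1+O(1/n)$ factor. This is harmless under the paper's standing convention that $n$ is sufficiently large and constants are applied with slack (for instance, in the paper's applications $\epsilon'2^k$-good becomes $\epsilon'3^k$-good after restricting to a set of size $\ge n/3$), so the argument goes through as intended.
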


\begin{prop}\label{claim:typical}
     Given real  $0< \epsilon\leq 1$ and  integer $1\leq j\leq r-1$. Let  $\epsilon':= \sqrt{r^r\epsilon} $. Suppose that  $ \hq$ is $\epsilon$-close to $ \hq'$. Then the number of not $\epsilon'$-good vertex is at most $\epsilon' |V( \hq)|$.
\end{prop}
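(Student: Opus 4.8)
The plan is to unwind the definitions and reduce the claim to a double-counting estimate on the number of edges that must be added or deleted to turn $\hq$ into a copy of $\hq'$. Recall that ``$\hq$ is $\epsilon$-close to $\hq'$'' means there is a copy of $\hq'$ on $V(\hq)$, say $\hq''$ (so that $|\hq\triangle\hq''|\le\epsilon|V(\hq)|^r$), and ``$x$ is not $\epsilon'$-good'' means $\deg_{\hq''\setminus\hq}(x)>\epsilon'\binom{|V(\hq)|-1}{r-1}$; here I am reading the relevant ``$\hq'$'' in the definition of good as the fixed target copy $\hq''$. (If the intended reading is with respect to a $j$-set $S$ rather than a vertex, the same argument runs verbatim with $\binom{|V(\hq)|-1}{r-1}$ replaced by $\binom{|V(\hq)|-j}{r-j}$ and $|V(\hq)|$ on the right-hand side replaced accordingly; the integer $j$ appearing in the statement suggests this, so I will phrase the proof for a general $j$-set and specialise.)

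First I would set $n:=|V(\hq)|$ and let $D:=\hq''\setminus\hq$ denote the set of edges one must delete from (the would-be) $\hq''$; by hypothesis $|D|\le|\hq\triangle\hq''|\le\epsilon n^r$. For each $j$-set $S$ write $d(S):=\deg_{D}(S)=|\{T\in\binom{V(\hq)\setminus S}{r-j}\colon S\cup T\in D\}|$, so that $S$ is not $\epsilon'$-good precisely when $d(S)>\epsilon'\binom{n-j}{r-j}$. Counting incidences between $D$ and its $j$-subsets gives
\begin{equation}
\sum_{S\in\binom{V(\hq)}{j}}d(S)=\binom{r}{j}|D|\le\binom{r}{j}\epsilon n^r.
\end{equation}
If there were more than $\epsilon' n^j\ge\epsilon'\binom{n}{j}$ not-$\epsilon'$-good $j$-sets, then the left-hand side would exceed $\epsilon' n^j\cdot\epsilon'\binom{n-j}{r-j}$. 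Comparing the two bounds, it suffices to check
\begin{equation}
(\epsilon')^2 n^j\binom{n-j}{r-j}>\binom{r}{j}\epsilon n^r,
\end{equation}
which, using $\binom{n-j}{r-j}\ge(n-r)^{r-j}/(r-j)!\ge n^{r-j}/r^{r-j}$ for $n$ large, is implied by $(\epsilon')^2>r^r\epsilon$; and this is exactly the defining choice $\epsilon'=\sqrt{r^r\epsilon}$ (with a strict inequality absorbing the lower-order terms). This contradiction gives the bound.

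The only real subtlety is bookkeeping: making sure the crude estimate $\binom{n-j}{r-j}\ge n^{r-j}/r^{r-j}$ is legitimate for all $n$ in range (it holds once $n\ge r^2$, say, which is harmless since the statement is used for large $n$), and making sure the passage from $\binom{n}{j}$ to $n^j$ on both sides is done consistently so that no factor is lost. I do not anticipate a genuine obstacle here; the constant $r^r$ in $\epsilon'$ is deliberately generous and swallows every combinatorial factor $\binom{r}{j}$, $(r-j)!$, and the gap between $\binom{n-j}{r-j}$ and $n^{r-j}$. I would just need to state one clean inequality chain and note that $n$ is assumed large enough for the lower-order corrections to be negligible.
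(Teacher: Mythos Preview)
Your proposal is correct and is essentially the same double-counting argument as the paper's proof: the paper lets $m$ be the number of bad vertices, observes $m\,\epsilon'\binom{n-1}{r-1}\le r|\hq'\setminus\hq|\le r\epsilon n^r$, and solves for $m$ using the crude bound $\binom{n-1}{r-1}\ge (n/r)^{r-1}$. The only cosmetic difference is that you phrase it for general $j$-sets (prompted by the stray $j$ in the hypothesis, which the paper never actually uses) and are a bit more explicit about where the constant $r^r$ comes from; the underlying estimate is identical.
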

\begin{proof}
  Let $m$ be the number of vertices that are not $\epsilon'$-good. Since $ \hq$ is $\epsilon$-close to $ \hq'$, there are at most $\epsilon |V( \hq)|^r$ edges in $ \hq'\setminus  \hq$. It follows that
  \[
  m\epsilon'\binom{|V( \hq)|-1}{r-1}\leq r| \hq'\setminus  \hq|\leq r\epsilon  |V( \hq)|^r.
  \]
  Then $m\leq r^r\epsilon |V( \hq)|/\epsilon'$. By setting $\epsilon'=\sqrt{r^r\epsilon}$, we conclude that $m\leq \epsilon'|V( \hq)|$.
\end{proof}

 By letting $\epsilon'=\sqrt{(k+1)^{k+1}\epsilon}$ and using Proposition \ref{claim:typical}, there are at most $\epsilon'(n+k/n)$ vertices in $X\cup V$ are not $\epsilon'$-good with respect to $\hht_{ext}$. Denote $U$ by the family consists of all vertices not $\epsilon'$-good. The following lemma allow us  to find a matching $\hm$ such that covering all the vertices not $\epsilon'$-good. Moreover, we will show later that $\hht-V(\hm)$ contains  a perfect matching.

\begin{lem}\label{lem:main parity}
	Given $k\geq 2$ and   $\epsilon>0$. There exists  $n_0$ such that the following holds. Suppose that $\hht$ is a $(1,k)$-graph on $X\cup V$, and $|V|=k|X|=n\geq n_0$. If $\hht$ is  $\epsilon$-close to  $\hht_{ext}$,  and $\delta_\ell(\hhn_\hht(x))>\delta(n,k,\ell)$ for each $x\in X$.  Then there exists a matching $\mathcal{M}$ in $\hht$ of size at most $|U|+1$ such that the following hold.  (i) Let $\hht'=\hht-V(\hm)$, $\hht'_{ext}=\hht-V(\hm)$, then $\hht'$ is $\epsilon'2^k$-good with respect to $\hht'_{ext}$, (ii) Recall that
   $
   \hht_{ext}= \mathcal{T}(\hh^i(A,B),\ldots,\hh^i(A,B))
   $
   for some $i\in \{0,1\}$ and $A,B$.  Let $A'=A\setminus V(\hm)$, $X'=X\setminus V(\hm)$, then
\begin{align}\label{eq-new3.1a}
  i|X'|\equiv |A'| \pmod 2.
\end{align}
\end{lem}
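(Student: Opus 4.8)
\textbf{Proof proposal for Lemma \ref{lem:main parity}.}

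The plan is to build the matching $\hm$ in two stages: first cover the $\le \epsilon'(n+n/k)$ bad vertices, then add at most one extra edge to fix the parity condition \eqref{eq-new3.1a}. For the first stage, write $U = U_X \cup U_V$ where $U_X = U\cap X$ and $U_V = U\cap V$. I would greedily cover $U$ by edges of $\hht$: since $\hht$ is $\epsilon$-close to $\hht_{ext}$ and each link $\hhn_\hht(x)$ has the near-optimal $\ell$-degree $>\delta(n,k,\ell)=(1/2-o(1))\binom{n-\ell}{k-\ell}$, almost every vertex of $V$ lies in many edges through any prescribed small vertex set, so one can pick pairwise disjoint edges, one new vertex of $U_X$ (or a fresh spare vertex of $X$) per edge, each absorbing $k$ vertices of $V$ and as many vertices of $U_V$ as possible; after at most $|U|$ edges every vertex of $U$ is covered. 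The key point to check is that this greedy process never gets stuck: at each step we have removed $O(\epsilon' n)$ vertices, and Proposition \ref{prop-2.1} together with the degree hypothesis guarantees that a good vertex $x$ still has a link $k$-graph with minimum $(k-1)$-codegree $>(1/2-o(1))\binom{n}{1}$ on the remaining vertex set, hence an edge avoiding any $O(\epsilon' n)$ forbidden vertices and containing any desired (small) set of uncovered vertices of $V$. Call the resulting matching $\hm_0$, $|\hm_0|\le |U|$.

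For part (i): after removing $V(\hm_0)$ we have a $(1,k)$-graph $\hht' = \hht - V(\hm_0)$ on $X'\cup V'$ with $|V'|\ge n - (k+1)|U| \ge (1-2(k+1)\epsilon')n$, so $|V(\hht')| \ge (1-o(1))|V(\hht)|$; take $c = 1 - (k+1)\epsilon'$ say. Every remaining vertex $v$ was $\epsilon'$-good in $\hht$ with respect to $\hht_{ext}$, i.e. $\deg_{\hht_{ext}\setminus\hht}(v)\le \epsilon'\binom{|V(\hht)|-1}{k}$. Applying Proposition \ref{prop:3.2} with $r = k+1$, $S=\{v\}$, $\alpha'=\epsilon'$, $c$ as above, $\hht'' = \hht_{ext}[X'\cup V'] =: \hht'_{ext}$, we get that $v$ is $\epsilon'/c^{k}$-good in $\hht'$ with respect to $\hht'_{ext}$, and since $c^{-k} = (1-(k+1)\epsilon')^{-k} < 2^k$ for $\epsilon'$ small, $\hht'$ is $\epsilon' 2^k$-good with respect to $\hht'_{ext}$. (Here I read the statement's ``$\hht'_{ext}=\hht - V(\hm)$'' as a typo for $\hht'_{ext}=\hht_{ext}-V(\hm)$, which is forced by the intended meaning.)

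For part (ii), the parity fix: after $\hm_0$ we have $X' = X\setminus V(\hm_0)$ and $A' = A\setminus V(\hm_0)$; if $i|X'|\equiv|A'|\pmod 2$ we are done with $\hm=\hm_0$. Otherwise I want to add one more edge $e$ to $\hm_0$, disjoint from $V(\hm_0)$, which flips the parity of $i|X'|-|A'|$. Removing one edge $e=\{x\}\cup T$ with $x\in X$ and $T\in\binom{V}{k}$ changes $|X'|$ by $1$ and $|A'|$ by $|T\cap A|$, so it changes $i|X'|-|A'|$ by $i - |T\cap A|\pmod 2$; I need to choose $e\in\hht$ with $|T\cap A|\not\equiv i\pmod 2$, i.e.\ an edge of ``the wrong $\hht_{ext}$-type'' through some good $x$, avoiding the $O(\epsilon' n)$ already-used vertices. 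This is exactly the place where I expect the only real subtlety: in $\hht_{ext}$ itself \emph{every} edge $\{x\}\cup T$ satisfies $|T\cap A|\equiv i\pmod 2$ (that is the definition of $\hh^i(A,B)$ lifted through $x$), so such an edge $e$ cannot come from $\hht_{ext}$ — it must be one of the $\le \epsilon|V(\hht)|^{k+1}$ edges in $\hht\setminus\hht_{ext}$, or $\hht$ must in fact \emph{contain no such edge}. But if $\hht$ had no edge $\{x\}\cup T$ with $|T\cap A|\not\equiv i$, then every vertex $x\in X$ would have $\hhn_\hht(x)\subseteq \hh^i(A,B)$, whence $\delta_\ell(\hhn_\hht(x))\le\delta_\ell(\hh^i(A,B))$; one checks (this is where the hypothesis $i|X|\equiv |A|$ or the failure thereof enters, via the definition of $ext(n,k)$ and the parity clause ``$in/k$ has different parity with $|A|$'') that in the parity-offending configuration $\hh^i(A,B)\in ext(n,k)$, so $\delta_\ell(\hh^i(A,B))\le\delta(n,k,\ell)$, contradicting $\delta_\ell(\hhn_\hht(x))>\delta(n,k,\ell)$. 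Hence a suitable edge $e$ exists; moreover it exists through infinitely many choices of $x$ and $T$ (by the same codegree-robustness argument as in stage one applied to one fixed bad-type edge and sliding it off the used vertices, or simply: the minimum $\ell$-degree forces $>\binom{n}{k}/\mathrm{poly}$ edges total, and at most $\epsilon n^{k+1}$ of them are ``missing'' from the analysis), so we may take $e$ disjoint from $V(\hm_0)$. Set $\hm = \hm_0\cup\{e\}$, $|\hm|\le |U|+1$; part (i) is unaffected since removing one more edge only shrinks $|V(\hht')|$ by $k+1$, keeping $c^{-k}<2^k$. This completes the plan; the main obstacle is the existence of the single parity-fixing edge $e$, which rests on correctly unwinding the definition of $\delta(n,k,\ell)$ and $ext(n,k)$ so that ``no wrong-type edge through any $x$'' contradicts the degree hypothesis precisely in the case where \eqref{eq-new3.1a} fails for $\hm_0$.
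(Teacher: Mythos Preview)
Your two-stage outline (cover $U$, then add one parity-fixing edge) matches the paper's skeleton, and your treatment of (i) via Proposition~\ref{prop:3.2} is correct. The genuine gap is in stage two: you need a wrong-type edge $e=\{x\}\cup T$ with $|T\cap A|\not\equiv i\pmod 2$ \emph{disjoint from} $V(\hm_0)$, but your argument only produces a wrong-type edge somewhere in $\hht$. Your ``codegree-robustness / sliding'' hand-wave does not transfer: the codegree condition lets you move an \emph{arbitrary} edge off a small forbidden set, but gives no control over the parity of $|T\cap A|$ while you slide. Concretely, the hypothesis $\delta_\ell(\hhn_\hht(x))>\delta(n,k,\ell)=\delta_\ell(\hh^i(A,B))$ is sharp, so for each minimum-achieving $\ell$-set $S$ it yields just one wrong-type $k$-set through $S$, and nothing prevents all of these from meeting $V(\hm_0)$ in their remaining $k-\ell$ vertices; after restricting to $V\setminus V(\hm_0)$ both sides of the comparison drop by the same order $O(\epsilon')\binom{n-\ell}{k-\ell}$ and the strict inequality is lost.

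The paper closes exactly this gap by reversing your order and allowing the parity-fixing edge to meet $U$. It first swaps the bad vertices across the partition to form $A_1=(A\setminus U)\cup(B\cap U)$, $B_1=(B\setminus U)\cup(A\cap U)$, and uses $\delta_\ell(\hhn_\hht(x))>\delta(n,k,\ell)\ge\delta_\ell(\hh^i(A_1,B_1))$ to find (for a good $x$) an edge $E\notin\hh^i(A_1,B_1)$; this gives the refined congruence $|E\cap A|\equiv i+1+|E\cap U|\pmod 2$. Only then does it greedily cover each $v_j\in U$ by an edge $E_j$ disjoint from $E\setminus U$ and from the earlier $E_{j'}$, with $|E_j\cap A|\equiv i+1\pmod 2$. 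Setting $\hm'=\{E_1,\dots,E_{|U|}\}$, if the parity \eqref{eq-new3.1a} already holds one takes $\hm=\hm'$; otherwise one takes $\hm=(\hm'\setminus\{E_j:v_j\in E\cap U\})\cup\{E\}$, and the $|E\cap U|$ term in the congruence for $E$ is precisely what makes the parity computation close. The point is that working with the swapped partition $A_1,B_1$ lets the single application of the degree hypothesis produce an edge whose parity contribution is correct \emph{relative to which bad vertices it hits}, so no disjointness from $U$ is required.
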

\begin{proof}
 We claim that there exists  $E\in \hht\setminus \hht_{ext}$ such that 
   \begin{align}\label{eq-3.2new}
     |E\cap A|\equiv i+1+|E\cap U| \pmod 2.
   \end{align}
Move the vertices in $U\cap A$ and $U\cap B$ to  the other part, we get a new partition $V=A_1\cup B_1$ where $A_1=(A\setminus U)\cup (B\cap U)$ and $B_1=(B\setminus U)\cup (A\cap U)$. Fix any $x\in X\setminus U$, by the degree assumption  $\delta_\ell(\hhn_\hht(x))> \delta(n,k,\ell)\geq\delta_\ell(\hh^i(A_1,B_1)) $,  there exists  $E_1\in  \binom{V}{k}$ such that  $E_1\in \hhn_\hht(x)\setminus \hh^i(A_1,B_1)$. Thus
\[
|E_1\cap A|-|E_1\cap A\cap U|+|E_1\cap B\cap U|=|E_1\cap A_1|\equiv i+1 \pmod 2.
\]
Therefore, $\{x\}\cup E_1=E$ satisfies \eqref{eq-3.2new}.

For $v_i\in U$, we greedily to find $E_i\in \hht$ such that $E_i\cap U=\{v_i\}$ and $E\setminus U,E_1,\ldots,E_i$ are pairwise disjoint. This is possible since in the $i$-th step there are at most $(k+1)|U|\leq \epsilon'(k+1)n$ vertices used in $(E\setminus U)\cup (U\setminus \{v_i\})\cup E_1\cup\ldots \cup E_{i-1}$. Then for $n$ sufficient large at least
  \[
  \epsilon' (n+n/k)^k-\epsilon'(k+1)n\binom{|X\cup V|-2}{k-1}\geq \frac{1}{2}\epsilon' n^k\geq 1
  \]
   elements in $\hhn_\hht(v_i)\setminus \hht_{ext}$ do not intersect  $(E\setminus U)\cup (U\setminus \{v_i\})\cup E_1\cup\ldots \cup E_{i-1}$. Let $\hm'=\{E_1,\ldots,E_{|U|}\}$.  Since  $E_i\in \hhn_\hht(v_i)\setminus \hht_{ext}$  for $i\in |U|$, then
   \begin{align}\label{eq-3.3new}
     |E_i\cap A|\equiv i+1\pmod 2.
   \end{align}
   
   If
   \begin{align}\label{eq-3.2a}
     (n/k-|U|)i\equiv |A\setminus V(\hm')| \pmod 2,
   \end{align} 
   then by  letting $\hm=\hm'$, the  (ii) follows. 
   
   Now assume \eqref{eq-3.2a} does not hold.  Let $\hm=\hm''\cup\{E\}$ where $\hm''=\hm'\setminus \{E_i\colon v_i\in E\cap U\}$. Therefore,
   by \eqref{eq-3.2new}, we have
   \[
   |A\setminus V(\hm)|\equiv|A\setminus V(\hm'')|-(i+1+|E\cap U|) \pmod 2.
   \]
   Then by \eqref{eq-3.3new}, it follows that
   \[
   |A\setminus V(\hm'')|\equiv |A\setminus V(\hm')|+|E\cap U|(i+1) \pmod 2.
   \]
   Since \eqref{eq-3.2a} not holds, we obtain that
   \begin{align*}
     |A\setminus V(\hm)|
     &\equiv |A\setminus V(\hm')|+|E\cap U|(i+1)-(i+1+|E\cap U|) \pmod 2\\
     &\equiv (n/k-|U|)i+1+(i+1+|E\cap U|)+|E\cap U|(i+1) \pmod 2\\
     &\equiv(n/k-|U|+|E\cap U|-1)i \pmod 2.
   \end{align*}
   Then, the (ii) follows. 
   
   Moreover, by Proposition \ref{prop:3.2} each $v$ is $\epsilon'2^k$-good with respect to $\hht'_{ext}$, the (i) follows.
\end{proof}

By using Lemma \ref{lem:main parity}, we are left with a ``good" sub-hypergraph which contains no ``bad" vertices.

In the following, we are going to divide $\hht'$ into two $(k+1)$-partite $(k+1)$-graph with large minimum vertex degree.  Recall that $\hht_{ext}=\mathcal{T}(\hh^i(A,B),\ldots,\hh^i(A,B))$ for some $A,B,i$ when $||A|-|B||\leq 1$ and $2\nmid in/k+|A|$. Let $\hm$ be a matching in $\hht$ described in \ref{lem:main parity}. Let $X'=X\setminus V(\hm)$, $A'=A\setminus V(\hm)$, $B'=B\setminus V(\hm)$, and $V'=A'\cup B'=V\setminus V(\hm)$, $n'=|V'|$.  Note that $V(\hm)\leq (|U|+1)(k+1)\leq 2\epsilon'kn$, then
\begin{align}\label{eq-3.1a}
  n/2-2\epsilon'kn\leq |A'|,|B'|\leq n/2+2\epsilon'kn.
\end{align}


We are looking for a partition of $V'=(S_1\cup\cdots\cup S_k)\cup (T_1\cup \cdots\cup T_k)\cup E$ and a partition of $X'=Y_1\cup Y_2\cup Y_3$ satisfy some conditions. According to the parity of $i$ and $k$, we distinguish four cases.
\begin{itemize}
  \item [(1)] If $i=0$ and $k$ is even.

      Let $|A'|= rk+s $ for some $0\leq s\leq k-1$. By Lemma \ref{lem:main parity},
      \begin{align}\label{eq-3.1}
        |A'|=rk+s\equiv in_1/k= 0  \pmod 2.
      \end{align}
      Since $k$ is even, we have $2\mid s$. Let $Y_3\cup E$ be an edge in $\hht'$ intersect $A'$ exactly $s$ elements. Such an edge exists since $\hht'$ is $\epsilon' 2^k$-good with respect to $\hht'_{ext}$.

      Partition $X'=Y_1\cup Y_2\cup Y_3$,   $A'\setminus E=T_1\cup\cdots\cup T_k$ and $B'\setminus E=S_1\cup\cdots\cup S_k$ such that $|Y_1|=|T_1|=\ldots=|T_k|=r$, and $|Y_2|=|S_1|=\ldots=|S_k|=\frac{n_1}{k}-r-1$.
  \item [(2)] If $i=0$ and $k$ is odd.

     Let $|A'|=r(k-1)+s$ for some $0\leq s\leq k-2$. By Lemma \ref{lem:main parity},
     \[
     |A'|=r(k-1)+s\equiv in_1/k=0 \pmod 2.
     \]
     Since $k$ is odd, we have $2\mid s$. Let $Y_3\cup E$ be an edge in $\hht'$ intersect $A'$ exactly $s$ elements. Such an edge exists since $\hht'$ is $\epsilon' 2^k$-good with respect to $\hht'_{ext}$.

      Partition $X'=Y_1\cup Y_2\cup Y_3$, $A'\setminus E=T_1\cup\cdots\cup T_{k-1}$ and $B'\setminus E=S_1\cup\cdots\cup S_k\cup T_k$ such that $|Y_1|=|T_1|=\ldots=|T_k|=r$, and $|Y_2|=|S_1|=\ldots=|S_k|=\frac{n_1}{k}-r-1$.

    \item [(3)] If $i=1$ and $k$ is even.

    Let $|A'|=n_1/k+r(k-2)+s$ for some $0\leq s\leq k-3$. By Lemma \ref{lem:main parity},
    \[
    |A'|=\frac{n_1}{k}+r(k-2)+s\equiv \frac{in_1}{k}=\frac{n_1}{k}  \pmod 2.
    \]
    Since $k$ is even, we have $s$ is even.

    Let   $Y_3\cup E$ be union of two  edges in $\hht'$ one intersect $A'$ exactly $1$ element and the other intersect $A'$ exactly $s-1$ elements. Such  edges exists since $\hht'$ is $\epsilon' 2^k$-good with respect to $\hht'_{ext}$.

      Partition $X'=Y_1\cup Y_2\cup Y_3$,  $A'\setminus E= T_1\cup\cdots\cup T_{k-1}\cup S_k$ and $B'\setminus E=S_1\cup\cdots\cup S_{k-1}\cup T_k$ such that $|Y_1|=|T_1|=\ldots=|T_k|=r$, and $|Y_2|=|S_1|=\ldots=|S_k|=\frac{n_1}{k}-r-2$.

    \item [(4)] If $i=1$ and $k$ is odd.

    We can seen $A'$ as $B'$ and $B'$ as $A'$. This case is same as case (2).

\end{itemize}

 In both four cases, the complete $(k+1)$-partite $(k+1)$-graph on $(Y_1,T_1,T_2,\ldots,T_k)$ and $(Y_2,S_1,S_2,\ldots,S_k)$  are  subgraphs of $\hht'_{ext}$.  Since \eqref{eq-3.1a} and $k\geq 3$,  those vertex  sets of size at least $n_1/3$. Then by Proposition \ref{prop:3.2}, the induced subgraph of $\hht'$ on $(Y_1,T_1,T_2,\ldots,T_k)$ and $(Y_2,S_1,S_2,\ldots,S_k)$  are $\epsilon'3^k$-good with respect to the complete $(k+1)$-partite $(k+1)$-graph on the same vertex set.

In fact, for a single vertex the property of being  ``good'' is equivalent to having a large degree.  The next lemma due to author joint with Wang allow us  to find a perfect matching in $r$-part $r$-graph with large degree.
\begin{lem}[\hspace{1sp}\cite{Wang2023}]\label{lem-k-part}
	For every integer $r\geq 2$, there exists $\alpha>0$ and $n_0$ such that the following holds. Suppose that $\hf$ is a $r$-partite $r$-graph with each part of size $n\geq n_0$ vertices, and $\delta(\hf)>(1-\alpha)n^{r-1}$.  Then  $\hf$ contains a perfect matching. \qed
\end{lem}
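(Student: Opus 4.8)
The plan is to prove this by the absorbing method, in direct analogy with the proof of Theorem~\ref{thm-nonextr}. Fix $r\geq 2$ and a hierarchy of constants $1/n_0\ll\alpha\ll\beta'\ll\beta\ll 1/r$. Write $V_1,\dots,V_r$ for the parts of $\hf$; I will call a set with exactly one vertex in each $V_i$ a \emph{target}, and a vertex set $U$ \emph{balanced} if $|U\cap V_i|$ does not depend on $i$ (the complement of any matching of $\hf$ is balanced, since every edge meets each part exactly once). I would first prove two statements and then combine them exactly as Theorem~\ref{thm-nonextr} is deduced from Lemmas~\ref{lem-pathcover} and~\ref{lem-absorbing}: (i) an \emph{absorbing lemma}: there is a matching $\hm_0$ of $\hf$ with $|V(\hm_0)\cap V_i|\leq\beta n$ such that $\hf[V(\hm_0)\cup U]$ has a perfect matching for every balanced $U$ disjoint from $V(\hm_0)$ with $|U\cap V_i|\leq\beta' n$; and (ii) an \emph{almost-cover lemma}: $\hf$ has a matching covering all but at most $\beta' n$ vertices of each part.

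For (i), given a target $T=\{t_1,\dots,t_r\}$ I would define an \emph{absorbing gadget for $T$} to be a matching $\{W_1,\dots,W_r\}$ of $\hf$ (thus $r^2$ vertices, $r$ in each part, with $W_i=\{w_{i,1},\dots,w_{i,r}\}$, $w_{i,j}\in V_j$) whose vertex set together with $T$ also decomposes into $r+1$ edges $W_r,E_1,\dots,E_r$, where $E_j$ consists of $t_j$ together with, for each $j'\neq j$, one of $w_{1,j'},\dots,w_{r-1,j'}$, the choices being made according to a fixed Latin-square pattern so that $E_1,\dots,E_r,W_r$ are pairwise disjoint. The reason for this structure is that each of the $2r$ edge-membership conditions is \emph{local}: ``$W_i\in\hf$'' fails for at most $\alpha n^{r}$ choices of its $r$ vertices (since $\hf$ omits at most $\alpha n^{r}$ of the $n^r$ potential edges), while ``$E_j\in\hf$'' fails for at most $\alpha n^{r-1}$ choices of its $r-1$ vertices other than $t_j$ (since $t_j$ has degree at least $(1-\alpha)n^{r-1}$). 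Hence, of the $(1-o(1))n^{r^2}$ ways to choose the gadget vertices pairwise distinct and avoiding $T$, all but at most $2r\alpha n^{r^2}$ satisfy every condition, so every target has at least $\frac12 n^{r^2}$ gadgets once $\alpha\leq 1/(8r)$ and $n$ is large. From here I would copy the proof of Lemma~\ref{lem-absorbing}: take a uniformly random $r^2$-uniform matching $\hm'$ of size $\beta n/r^2$ in $V(\hf)$, apply the Frankl--Kupavskii inequality (Lemma~\ref{thm-concentrate}) to the family of gadgets of each target together with a union bound over all targets to fix $\hm'$ so that every target has at least $\beta' n$ of its gadgets among the blocks of $\hm'$, and let $\hm_0$ be the union of those blocks of $\hm'$ that are themselves matchings of $\hf$ (every gadget survives). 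Since the blocks of $\hm'$ are pairwise disjoint, a balanced $U$ is absorbed by splitting it into at most $\beta' n$ targets and dealing with them one at a time: each step uses up only one gadget-block, so an unused gadget for the next target always remains.

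For (ii), I would use that $\hf$ is ``nearly complete'': every vertex degree lies in $[(1-\alpha)n^{r-1},\,n^{r-1}]$ and every pair of vertices has codegree $O(n^{r-2})=o(n^{r-1})$, so $\hf$ has a matching leaving $o(n)$ vertices of each part uncovered --- either by the Pippenger--Spencer theorem, or by an argument parallel to the proof of Lemma~\ref{lem-pathcover} (a maximum matching whose leftover is at least $\beta' n$ in some part is enlarged by the same switching argument, using the $r$-partite analogues of Fact~\ref{fact-2.1} and Lemmas~\ref{lem-2.4} and~\ref{lem-Pohoata2022}). Finally I would combine: apply (i) to obtain $\hm_0$; note $\hf-V(\hm_0)$ is still nearly complete, each degree having dropped by at most $r\beta n\cdot n^{r-2}$; apply (ii) to it to obtain a matching $\hm_1$ whose uncovered set $U$ is balanced with $|U\cap V_i|\leq\beta' n$; absorb $U$ into $\hm_0$ via (i) to get a matching $\hm_0'$ with $V(\hm_0')=V(\hm_0)\cup U$; then $\hm_0'\cup\hm_1$ is a perfect matching of $\hf$.

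The step I expect to be the main obstacle is the gadget construction in (i): one has to arrange the $(r+1)$-edge redecomposition of gadget-plus-target so that all $2r$ resulting edge-constraints are local, as this is exactly what lets the crude union bound beat $n^{r^2}$, and it is the only place where the hypothesis $\delta(\hf)>(1-\alpha)n^{r-1}$ enters. The remaining ingredients --- the concentration step in the absorbing lemma, the almost-cover, and the final assembly --- are routine transcriptions of machinery already developed in the paper, requiring only the usual care with the hierarchy $\alpha\ll\beta'\ll\beta$.
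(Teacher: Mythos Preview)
The paper does not give a proof of this lemma: it is quoted verbatim from \cite{Wang2023} and marked with \qed, so there is nothing in the paper to compare your proposal against.

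On its own merits your absorbing-method argument is correct. The gadget construction in (i) works as you describe: with $r^2$ free vertices and $2r$ edge-constraints, each constraint eliminates at most $\alpha n^{r^2}$ candidates (for the $W_i$-constraints because $|\overline{\hf}|\leq\alpha n^r$, for the $E_j$-constraints because $\deg_{\overline{\hf}}(t_j)\leq\alpha n^{r-1}$), so $(1-o(1)-2r\alpha)n^{r^2}\geq\tfrac12 n^{r^2}$ gadgets survive; the Latin-square assignment of the $w_{i,j'}$ to the $E_j$ is exactly what makes the $E_j$-constraints local. The concentration step and the final assembly are indeed routine transcriptions of Lemmas~\ref{thm-concentrate} and the proof of Theorem~\ref{thm-nonextr}.

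One remark: your step (ii) is much easier than you suggest. Since the leftover of a maximum matching is balanced and $\hf$ has at most $\alpha n^r$ non-edges, a leftover with $\geq\beta' n$ vertices per part would contain $(\beta' n)^r>\alpha n^r$ transversals, hence an edge of $\hf$, contradicting maximality. So neither Pippenger--Spencer nor the switching machinery of Lemma~\ref{lem-pathcover} is needed; a single pigeonhole suffices. This also shows that the genuine content of the lemma lies entirely in your absorbing gadget, confirming your assessment of where the main obstacle is.
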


By using Lemma \ref{lem-k-part}, we get a perfect matching in the two $(k+1)$-partite $(k+1)$-graphs, which together with $Y_3\cup E$ forms a perfect matching in $\hht'$, further imply a perfect matching in $\hht$. The proof of Theorem \ref{thm-extr} is completed.


\begin{thebibliography}{10}

	\bibitem{OliveiraBastos2017}
	J.~d.~O. Bastos, G.~O. Mota, M.~Schacht, J.~Schnitzer, and
	F.~Schulenburg.
	\newblock Loose {H}amiltonian cycles forced by large
	$(k-2)$-degree---approximate version.
	\newblock {\em {SIAM} Journal on Discrete Mathematics}, 31(4):2328--2347, 2017.
	
	\bibitem{bastos2018loose}
	J.~d.~O. Bastos, G.~O. Mota, M.~Schacht, J.~Schnitzer, and F.~Schulenburg.
	\newblock Loose {H}amiltonian cycles forced by large $(k-2)$-degree-sharp
	version.
	\newblock {\em Contributions to Discrete Mathematics}, 61:101-106, 2017.
	
	\bibitem{buss2013minimum}
	E.~Buss, H.~H{\`a}n, and M.~Schacht.
	\newblock Minimum vertex degree conditions for loose {H}amilton cycles in
	$3$-uniform hypergraphs.
	\newblock {\em Journal of Combinatorial Theory, Series B}, 103(6):658--678,
	2013.
	
	\bibitem{Czygrinow2014}
	A.~Czygrinow and T.~Molla.
	\newblock Tight codegree condition for the existence of loose {H}amilton cycles
	in $3$-graphs.
	\newblock {\em {SIAM} Journal on Discrete Mathematics}, 28(1):67--76, 2014.
	

	
	\bibitem{Dirac1952}
	G.~A. Dirac.
	\newblock Some theorems on abstract graphs.
	\newblock {\em Proceedings of the London Mathematical Society}, 3(1):69--81,
	1952.
	
	\bibitem{ERDOS1961}
	P.~Erd{\H{o}}s, C.~Ko, and R.~Rado.
	\newblock Intersection theorems for systems of finite sets.
	\newblock {\em The Quarterly Journal of Mathematics}, 12(1):313--320, 1961.
    \bibitem{erdos1964}
    P. Erd\H{o}s.
    \newblock On extremal problems of graphs and generalized graphs.
    \newblock {\em Israel Journal of Mathematics}, 2(3):183--190, 1964.
	
	\bibitem{frankl2018erd}
	P.~Frankl and A.~Kupavskii.
	\newblock {The Erd\H{o}s Matching Conjecture and concentration inequalities}.
	\newblock {\em Journal of Combinatorial Theory, Series B}, 157:366--400, 2022.
	
	\bibitem{Garbe2018}
	F.~Garbe and R.~Mycroft.
	\newblock {H}amilton cycles in hypergraphs below the {D}irac threshold.
	\newblock {\em Journal of Combinatorial Theory, Series B}, 133:153--210, 2018.
	
	\bibitem{garey1979computers}
	M.~R. Garey and D.~S. Johnson.
	\newblock {\em Computers and intractability}, volume 174.
	\newblock freeman San Francisco, 1979.
	
	\bibitem{glebov2012extremal}
	R.~Glebov, Y.~Person, and W.~Weps.
	\newblock On extremal hypergraphs for {H}amiltonian cycles.
	\newblock {\em European Journal of Combinatorics}, 33(4):544--555, 2012.
	
	\bibitem{hall1987}
	P. Hall. \newblock On Representatives of Subsets. \newblock { \em Journal of the London Mathematical Society}, 1(1):26-30, 1935.
	
	\bibitem{Han2022}
	H.~H{\`{a}}n, J.~Han, and Y.~Zhao.
	\newblock Minimum degree thresholds for {H}amilton $(k/2)$-cycles in
	$k$-uniform hypergraphs.
	\newblock {\em Journal of Combinatorial Theory, Series B}, 153:105--148, 2022.
	
	\bibitem{Han2010}
	H.~H{\`{a}}n and M.~Schacht.
	\newblock {D}irac-type results for loose {H}amilton cycles in uniform
	hypergraphs.
	\newblock {\em Journal of Combinatorial Theory, Series B}, 100(3):332--346,
	2010.
	
	\bibitem{Han2015a}
	J.~Han and Y.~Zhao.
	\newblock Minimum codegree threshold for {H}amilton $\ell$-cycles in
	$k$-uniform hypergraphs.
	\newblock {\em Journal of Combinatorial Theory, Series A}, 132:194--223, 2015.
	
	\bibitem{Han2015}
	J.~Han and Y.~Zhao.
	\newblock Minimum vertex degree threshold for loose {H}amilton cycles in
	$3$-uniform hypergraphs.
	\newblock {\em Journal of Combinatorial Theory, Series B}, 114:70--96, 2015.
	
	\bibitem{Han2016}
	J.~Han and Y.~Zhao.
	\newblock Forbidding {H}amilton cycles in uniform hypergraphs.
	\newblock {\em Journal of Combinatorial Theory, Series A}, 143:107--115, 2016.
	

	\bibitem{Janson2000}
	S.~Janson, T.~{\L}uczak, and A.~Rucinski.
	\newblock {\em Random Graphs}.
	\newblock John Wiley {\&} Sons, Inc., 2000.
	
	\bibitem{katona1999hamiltonian}
	G.~Y. Katona and H.~A. Kierstead.
	\newblock {H}amiltonian chains in hypergraphs.
	\newblock {\em Journal of Graph Theory}, 30(3):205--212, 1999.
	
	\bibitem{Keevash2011}
	P.~Keevash, D.~K\"{u}hn, R.~Mycroft, and D.~Osthus.
	\newblock Loose {H}amilton cycles in hypergraphs.
	\newblock {\em Discrete Mathematics}, 311(7):544--559, 2011.
	
	\bibitem{Kuehn2010}
	D.~K\"{u}hn, R.~Mycroft, and D.~Osthus.
	\newblock {H}amilton $\ell$-cycles in uniform hypergraphs.
	\newblock {\em Journal of Combinatorial Theory, Series A}, 117(7):910--927,
	2010.
	
	\bibitem{Kuehn2014}
	D.~K\"{u}hn and D.~Osthus.
	\newblock {H}amilton cycles in graphs and hypergraphs: an extremal perspective.
	\newblock 2014.

\bibitem{Lu2021}
H. Lu, Y. Wang and X. Xing.
 \newblock Co-degree threshold for rainbow perfect matchings in uniform hypergraphs.
    arXiv:2111.00372
\bibitem{Lu2021-1}
H. Lu, X. Yu and X. Yuan. 
\newblock Rainbow matchings for 3-uniform hypergraphs. \newblock {\em Journal of Combinatorial Theory, Series A}, 183: 105489,   2021.
    
\bibitem{Marks2011}
    K. Markstr\"{o}m and A. Ruci\'{n}ski.
    \newblock Perfect matchings (and {Hamilton} cycles) in hypergraphs with large degrees.
    \newblock {\em European Journal of Combinatorics}, 32(5):677--687, 2011.
    
	\bibitem{Moon1963}
	J.~Moon and L.~Moser.
	\newblock On {H}amiltonian bipartite graphs.
	\newblock {\em Israel Journal of Mathematics}, 1(3):163--165, 1963.

    \bibitem{Pohoata2022}
C. Pohoata, L. Sauermann, and D. Zakharov.  Sharp bounds for rainbow matchings in hypergraphs[J]. arXiv preprint arXiv:2212.07580, 2022.
	
	\bibitem{Reiher2019}
	C.~Reiher, V.~R\"{o}dl, A.~Ruci{\'{n}}ski, M.~Schacht, and E.~Szemer{\'{e}}di.
	\newblock Minimum vertex degree condition for tight {H}amiltonian cycles in
	$3$-uniform hypergraphs.
	\newblock {\em Proceedings of the London Mathematical Society},
	119(2):409--439, 2019.
	
	\bibitem{Roedl2010}
	V.~R\"{o}dl and A.~Ruci{\'{n}}ski.
	\newblock {D}irac-type questions for hypergraphs {\textemdash} a survey (or
	more problems for endre to solve).
	\newblock In {\em Bolyai Society Mathematical Studies}, pages 561--590.
	Springer Berlin Heidelberg, 2010.
	
	\bibitem{VOJTECH2006}
	V.~R\"{o}dl, A.~Ruci\'{n}ski, and E.~Szemer{\'{e}}di.
	\newblock A {D}irac-type theorem for $3$-uniform hypergraphs.
	\newblock {\em Combinatorics, Probability and Computing}, 15(1-2):229, 2006.
	
	\bibitem{Roedl2011}
	V.~R\"{o}dl, A.~Ruci{\'{n}}ski, and E.~Szemer{\'{e}}di.
	\newblock {D}irac-type conditions for {H}amiltonian paths and cycles in
	$3$-uniform hypergraphs.
	\newblock {\em Advances in Mathematics}, 227(3):1225--1299, 2011.
	
	\bibitem{Roedl2008}
	V.~R\"{o}dl, E.~Szemer{\'{e}}di, and A.~Ruci{\'{n}}ski.
	\newblock An approximate {D}irac-type theorem for $k$-uniform hypergraphs.
	\newblock {\em Combinatorica}, 28(2):229--260, 2008.
	
	\bibitem{Treglown2012}
	A.~Treglown and Y.~Zhao.
	\newblock Exact minimum degree thresholds for perfect matchings in uniform
	hypergraphs.
	\newblock {\em Journal of Combinatorial Theory, Series A}, 119(7):1500--1522,
	2012.
	
	\bibitem{Treglown2013}
	A.~Treglown and Y.~Zhao.
	\newblock Exact minimum degree thresholds for perfect matchings in uniform
	hypergraphs {II}.
	\newblock {\em Journal of Combinatorial Theory, Series A}, 120(7):1463--1482,
	2013.
	
	\bibitem{tutte1947factorization}
	W.~T. Tutte.
	\newblock The factorization of linear graphs.
	\newblock {\em Journal of the London Mathematical Society}, 1(2):107--111,
	1947.
	
	\bibitem{Wang2023}
	J.~Wang.
	\newblock A note on minimum degree condition for {H}amilton $(a,b)$-cycles in
	hypergraphs.
	\newblock {\em Discrete Mathematics}, 346(1):113120, 2023.
	

 



\bibitem{Wang2023}
    J. Wang and J. You.
    \newblock Minimum degree thresholds for {Hamilton} $(\ell,k-\ell)$-cycles in $k$-uniform hypergraphs.
    arXiv:2302.04845
    
	\bibitem{Zhao2016}
	Y.~Zhao.
	\newblock Recent advances on {D}irac-type problems for hypergraphs.
	\newblock {\em Recent trends in combinatorics}, pages 145--165, 2016.


	
\end{thebibliography}
\end{document}